\newcommand{\ud}[0]{\,\mathrm{d}}
\newcommand{\dist}[0]{\operatorname{dist}}
\newcommand{\proj}[0]{\mathbb{P}}
\newcommand{\disc}[0]{\mathbb{D}}
\newcommand{\ball}[0]{\mathbb{B}}
\newcommand{\rGamma}[0]{\reflectbox{\(\Gamma\)}}
\newcommand{\rGammaSmall}[0]{\reflectbox{{\scriptsize\(\Gamma\)}}}
\newcommand{\abs}[1]{|#1|}
\newcommand{\Babs}[1]{\Big|#1\Big|}
\newcommand{\Norm}[2]{\|#1\|_{#2}}
\newcommand{\BNorm}[2]{\Big\|#1\Big\|_{#2}}
\newcommand{\pair}[2]{\langle #1,#2 \rangle}
\newcommand{\ave}[1]{\langle #1\rangle}
\newcommand{\bddlin}[0]{\mathscr{L}}
\newcommand{\kernel}[0]{\mathsf{N}}
\newcommand{\range}[0]{\mathsf{R}}
\newcommand{\domain}[0]{\mathsf{D}}
\newcommand\R{\mathbf{R}}
\newcommand\C{\mathbf{C}}
\newcommand\N{\mathbf{N}}
\newcommand\Z{\mathbf{Z}}
\newcommand{\ran}{\mathsf{R}}
\newcommand{\prob}[0]{\mathbb{P}}
\newcommand{\Exp}[0]{\mathbb{E}}
\newcommand{\radem}[0]{\varepsilon}
\swapnumbers \numberwithin{equation}{section}
\theoremstyle{plain}
\newtheorem{theorem}[equation]{Theorem}
\newtheorem{proposition}[equation]{Proposition}
\newtheorem{corollary}[equation]{Corollary}
\newtheorem{lemma}[equation]{Lemma}
\theoremstyle{definition}
\newtheorem{definition}[equation]{Definition}
\newtheorem{assumption}[equation]{Assumption}
\theoremstyle{remark}
\newtheorem{remark}[equation]{Remark}
\newcommand{\journal}[1]{{\em #1}}
\newcommand{\volume}[1]{{\bfseries #1}}
\newcommand{\name}[1]{{\sc #1}}
\title[Functional calculus of Hodge-Dirac operators]{Holomorphic functional calculus   of   Hodge-Dirac operators in $L^{p}$}
\author[Hyt\"onen]{Tuomas Hyt\"onen}
\address{Department of Mathematics and Statistics, University of Helsinki, Gustaf H\"allstr\"omin katu 2b, FI-00014 Helsinki, Finland}
\email{tuomas.hytonen@helsinki.fi}
\author[McIntosh]{Alan McIntosh} 
\address{  Centre for Mathematics and its Applications,   Australian National University, Canberra ACT 0200, Australia}
\email{Alan.McIntosh@anu.edu.au}
\author[Portal]{Pierre Portal}
\address{Universit\'e Lille 1, Laboratoire Paul Painlev\'e, 59655 Villeneuve d'Ascq, France}
\email{pierre.portal@math.univ-lille1.fr}
\date{\today}
\subjclass[2000]{47A60, 47F05}
\begin{document}

\begin{abstract}
We study the boundedness of the $H^{\infty}$ functional calculus for differential operators acting in  \(L^{p}(\R^{n};\C^{N})\). For constant coefficients, we give simple conditions on the symbols implying such boundedness. For non-constant coefficients, we extend our recent results for the \(L^p\) theory of the Kato square root problem to the more general framework of Hodge-Dirac operators with variable coefficients \(\Pi_B\) as treated in \(L^2(\R^{n};\C^{N})\) by Axelsson, Keith, and McIntosh.
We obtain a characterization of the property that \(\Pi_B\) has a bounded \(H^{\infty}\) functional calculus, in terms of randomized boundedness conditions of its resolvent. This allows us to deduce stability under small perturbations of this functional calculus.

\end{abstract}

\maketitle

\section{Introduction}
A variety of problems in PDE's can be solved by establishing the boundedness,   and stability under small perturbations,   of the $H^\infty$ functional calculus of certain differential operators.   
In particular, Axelsson, Keith, and McIntosh \cite{akm}   have recovered and extended the solution of the Kato square root problem   \cite{AHLMT}   by showing that Hodge-Dirac operators with variable coefficients of the form
\(\Pi_{B} = \Gamma + B_{1}\Gamma^{*}B_{2}\) have a bounded $H^{\infty}$ functional calculus in $L^{2}(\R^{n};\C^{N})$,
when \(\Gamma\) is a homogeneous first order differential operator with constant coefficients, and 
\(B_{1},B_{2} \in L^{\infty}(\R^{n};\mathscr{L}(\C^{N}))\) are strictly accretive multiplication operators.
Recently,   Auscher, Axelsson, and McIntosh  \cite{aam}   have used related perturbation results to show the openness of some sets of well-posedness for boundary value problems with $L^2$ boundary data. \\

In this paper, we first consider homogeneous differential operators with constant (matrix-valued) coefficients. For such operators the boundedness of  the
$H^{\infty}$ functional calculus is established using Mikhlin's multiplier theorem. However, the estimates on the symbols may be difficult to check in practice, especially when the null spaces of the symbols are non-trivial. Here we provide a simple condition (invertibility of the symbols on their ranges and inclusion of their eigenvalues in a bisector), that gives such estimates.
We then turn to operators with coefficients in $L^{\infty}(\R^{n};\C)$ of the form \(\Pi_{B} = \Gamma + B_{1}\rGamma B_{2}\), where $\Gamma$ and $\rGamma$ are nilpotent homogeneous first order operators with constant (matrix-valued) coefficients, and 
\(B_{1},B_{2} \in L^{\infty}(\R^{n};\mathscr{L}(\C^{N}))\) are multiplication operators satisfying some $L^p$   coercivity   condition. For such operators, we aim at perturbation results which give, in particular, the boundedness of the $H^{\infty}$ functional calculus   when $B_{1},B_{2}$ are small pertubations of constant-coefficient matrices.  \\

This presents two main difficulties. First of all, even in $L^2$, the $H^{\infty}$ functional calculus of a (bi)sectorial operator is   in general   not stable under small perturbations in the sense that there exist a self-adjoint operator $D$ and bounded operators $A$ with arbitrary small norm such that $D(I+A)$ does not have a bounded  $H^{\infty}$ functional calculus (see \cite{my}).
Subtle functional analytic perturbation results exist (see \cite{ddhpv} and \cite{kkw}), but do not give the estimates needed in   \cite{aam} or \cite{akm}.   To obtain such estimates, one needs to take advantage of the specific structure of differential operators using harmonic analytic methods.
Then, the problem of moving from the $L^{2}$ theory to an $L^{p}$ theory is substantial. Indeed, the operators under consideration fall outside the Calder\'on-Zygmund class, and cannot be handled by familiar methods based on interpolation. A known substitute, pioneered by Blunck and Kunstmann in \cite{blunck-kunstmann}, and developed by Auscher and Martell \cite{auscher,auscher-martell1,auscher-martell2,auscher-martell3}, consists in establishing an extrapolation method adapted to the operator, which allows to extend results from $L^2$ to $L^p$ for $p$ in a certain range   $(p_1,p_2)$   containing $2$.
In \cite{hmp} we started another approach, which combines probabilistic tools from functional analysis with the aforementioned $L^2$ methods, and allows $L^p$ results which do not rely on some $L^2$ counterparts.\\

However, our goal in \cite{hmp} was the Kato problem, and we did not reach the generality of \cite{akm} which has recently proven particularly useful in connection with boundary value problems \cite{aam}.
Here we close this gap and, in fact, reach a further level of generality.
Roughly speaking,   for quite general differential operators, we show that   the boundedness of the $H^{\infty}$ functional calculus coincides with the R-(bi)sectoriality (see   Section~\ref{sec:prelim}   for relevant definitions). This then allows perturbation results, in contrast with the general theory of sectorial operators, where R-sectoriality and bounded $H^{\infty}$ calculus are two distinct properties, and perturbation results are much more restricted.\\

For the operators with variable coefficients, the core of the argument is contained in \cite{hmp}, so the reader might want to have a copy of this paper handy.
Here we focus on the points where \cite{hmp} needs to be modified, and develop some adaptation of the techniques to generalized Hodge-Dirac operators which may be of interest in other problems.
To make the paper more readable, we choose not to work in the Banach-space valued setting of \cite{hmp}, but the interested reader will soon realize that our proof carries over to that situation provided   that, as in \cite{hmp}, the target space is a UMD space, and both the space and its dual have the RMF property.   \\

The paper is organized as follows.
In Section 2, we recall the essential definitions. 
In Section 3, we present our setting and state the main results. 
In Section 4, we deal with constant coefficient operators and obtain appropriate estimates on their symbols.
In Section 5, we use these estimates to establish an $L^p$ theory for operators with constant (matrix-valued) coefficients.
In Section 6, we show that a certain (Hodge) decomposition, crucial in our study, is stable under small perturbations.
In Section 7, we give simple proofs of general operator theoretic results on the functional calculus of bisectorial operators.
In Section 8, we   prove   our key results on operators with variable coefficients, referring to \cite{hmp} when arguments are identical, and explaining how to modify them using the results of the preceding sections when they are not. 
  Finally, in Section 9, we derive from Section 8 Lipschitz estimates for the functional calculus of these operators.  

\subsubsection*{Acknowledgments.}
This work advanced through visits of T.H. and P.P. at the Centre for Mathematics and its Applications at the Australian National University, and of P.P. at the University of Helsinki. Thanks go to these institutions for their outstanding support. The research was supported by the Australian Government through the Australian Research Council, and by the Academy of Finland through the project 114374 ``Vector-valued singular integrals''.

\section{Preliminaries}\label{sec:prelim}

Fix some numbers \(n,N \in \Z_+\). We consider functions \(u:\R^n\to\C^N\), or \(A:\R^n\to\bddlin(\C^N)\). The Euclidean norm in both \(\R^n\) and \(\C^N\), as well as the associated operator norm in \(\bddlin(\C^N)\), are denoted by \(\abs{\cdot}\). To express the typical inequalities ``up to a constant'' we use the notation \(a \lesssim b\) to mean that there exists \(C<\infty\) such that \(a \leq Cb\), and the notation \(a \eqsim b\) to mean that \(a \lesssim b \lesssim a\). The implicit constants are meant to be independent of other relevant quantities. If we want to mention that the constant \(C\) depends on a parameter \(p\), we write \(a \lesssim _{p} b\).\\

Let us briefly recall the construction of the $H^{\infty}$ functional calculus (see \cite{adm,cdmy,markusbook,kk,m} for details).
\begin{definition}
A closed operator \(A\) acting in a Banach space \(Y\)  is called \emph{bisectorial} with angle \(\theta\) if its spectrum \(\sigma(A)\) is included in a bisector:
\begin{equation*}\begin{split}
  \sigma(A)\subseteq S_{\theta} &:= \Sigma_{\theta} \cup (-\Sigma_{\theta}),\quad\text{where}\\
 \Sigma_{\theta} &:= \{z \in \C \;;\; |\arg(z)| \leq \theta\},
\end{split}\end{equation*}
and outside the bisector it verifies the following resolvent bounds:
\begin{equation}\label{eq:biSect}
  \forall \theta' \in (\theta,\frac{\pi}{2}) \quad \exists C>0 \quad \forall \lambda \in \C\setminus S_{\theta'} \quad
  \Norm{\lambda(\lambda I-A)^{-1}}{\bddlin(Y)} \leq C.
\end{equation}
\end{definition}
We often omit the angle, and say that \(A\) is   \emph{bisectorial}    if it is bisectorial with \emph{some} angle \(\theta \in [0,\frac{\pi}{2})\). 

For \(0<\nu<\pi/2\), let \(H^{\infty}(S_{\nu})\) be the space of bounded functions on \(S_{\nu}\), which are holomorphic   on the interior of \(S_{\nu}\),   and consider the following subspace of functions with decay at zero and infinity:
\begin{equation*}
  H_{0} ^{\infty}(S_{\nu}) := \Big\{\phi \in H^{\infty}(S_{\nu})\;:\; \exists \alpha,C\in(0,\infty)\quad \forall z \in S_{\nu} \quad \abs{\phi(z)} \leq C\abs{\frac{z}{1+z^{2}}}^{\alpha} \Big\}.
\end{equation*}
For a bisectorial operator \(A\) with angle \(\theta<\omega<\nu<\pi/2\), and \(\psi\in H^{\infty}_0(S_{\nu})\), we define
\begin{equation*}
  \psi(A)u := \frac{1}{2i\pi} \int_{\partial S_{\omega}} \psi(\lambda)(\lambda-A)^{-1}u \ud\lambda,
\end{equation*}  
where \(\partial S_{\omega}\) is  directed anti-clockwise around \(S_{\omega}\).

\begin{definition}
A bisectorial operator \(A\) with angle \(\theta\), is said  to admit a {\em bounded \(H^{\infty}\) functional calculus} with 
angle~\(\mu \in [\theta,\frac{\pi}{2})\) if, for each $\nu \in (\mu,\frac{\pi}{2})$, $$
\quad \exists C<\infty \quad \forall \psi \in H_{0} ^{\infty}(S_{\nu}) \quad \|\psi(A)y\|_{Y} \leq C\|\psi\|_{\infty} \|y\|_{Y}.
$$
\end{definition}
In this case, and if $Y$ is reflexive, one can define a bounded operator $f(A)$ for $f \in H^{\infty}(S_{\nu})$ by
$$
f(A)u := f(0)\mathbb{P}_{0}u + \underset{n \to \infty}{\lim} \psi_{n}(A)u,
$$
where $\mathbb{P}_{0}$ denotes the projection on the null space of $A$   corresponding to the decomposition $Y=\kernel(A)\oplus\overline{\ran(A)}$, which exists for R-bisectorial operators,   and $(\psi_{n})_{n \in \N} \subset H_{0} ^{\infty}(S_{\nu})$ is a bounded sequence which converges locally uniformly to $f$. See \cite{adm,cdmy,markusbook,kk,m} for details. 

\begin{definition}
A family of operators $\mathscr{T}\subset\bddlin(Y)$ is called \emph{R-bounded} if for all \(M \in \N\), all 
\(T_1,\ldots,T_M\in\mathscr{T}\), and all \(u_1,\ldots,u_M \in Y\),
\begin{equation*}
  \Exp  \BNorm{\sum_{k=1} ^{M} \varepsilon_{k}T_k u_{k}}{Y}
  \lesssim \Exp \BNorm{\sum_{k=1} ^{M} \varepsilon_{k}u_{k}}{Y},
\end{equation*}
where   $\Exp$ is   the expectation   which   is taken with respect to a sequence of independent Rademacher variables $\varepsilon_{k}$,   i.e., random signs with $\prob(\varepsilon_k=+1)=\prob(\varepsilon_k=-1)=\frac12$.  

A bisectorial operator $A$ is called {\em R-bisectorial} with angle $\theta$ in $Y$ if the collection
\begin{equation*}
  \{\lambda(\lambda I-A)^{-1}:\lambda\in\C\setminus S_{\theta'}\}
\end{equation*}
is R-bounded for all $\theta'\in(\theta,\pi/2)$.  The infimum of such angles $\theta$ is called the angle of  R-bisectoriality of $A$.
\end{definition}

Again, we may omit the angle and simply say that $A$ is R-bisectorial if it is R-bisectorial with some angle $\theta\in(0,\pi/2)$. Notice that, by a Neumann series argument, this is equivalent to the R-boundedness of $\{(I+itA)^{-1}:t\in\R\}$.
The reader unfamiliar with R-boundedness and the derived notions can consult \cite{hmp} and the references therein.

\begin{remark}
\label{fcRsect}
On subspaces of $L^p$, $1<p<\infty$, an operator with a bounded $H^{\infty}$ functional calculus is R-bisectorial. The proof (stated for sectorial rather than bisectorial operators) can be found in \cite[Theorem 5.3]{kw}.
\end{remark}

\section{Main results}
 
We consider three types of operators.
First, we look at differential operators of arbitrary order with constant (matrix valued) coefficients, and provide simple conditions on their Fourier multiplier symbols to ensure that such operators are bisectorial and, in fact, have a bounded $H^{\infty}$ functional calculus.
Then, we focus on first order operators with a special structure, the Hodge-Dirac operators, and prove that, under an additional condition on the symbols, they give a specific (Hodge) decomposition of $L^p$.
Finally we turn to Hodge-Dirac operators with (bounded measurable) variable coefficients, and show that the boundedness of the $H^{\infty}$ functional calculus is preserved under small perturbation of the coefficients.\\

We work in the Lebesgue spaces \(L^p:= L^p(\R^n;\C^N)\) with \(p\in(1,\infty)\), and denote by $\mathscr{S}(\R^{n};\C^{N})$ the Schwartz class of rapidly decreasing functions with values in $\C^{N}$, and by $\mathscr{S}'(\R^{n};\C^{N})$ the corresponding class of tempered distributions.

\subsection{General constant-coefficient operators}\label{subsec:general}
  
In this subsection, we consider $k$th order homogeneous differential operators of the form
\begin{equation*}
  D=(-i)^k\sum_{\theta\in\N^n:\abs{\theta}=k}\hat{D}_{\theta}\partial^{\theta}
\end{equation*}
acting on $\mathscr{S}'(\R^{n};\C^{N})$ as a Fourier multiplier with symbol 
$\hat{D}(\xi) = \sum_{\abs{\theta}=k}\hat{D}_{\theta}\xi^{\theta}$,
where $\hat{D}_{\theta} \in \mathscr{L}(\C^{N})$.

\begin{assumption}
The Fourier multiplier symbol $\hat{D}(\xi)$ satisfies
\begin{equation*}\tag{D1}\label{D1}
  \kappa\abs{\xi}^{k}\abs{e}\leq\abs{\hat{D}(\xi)e}\quad\text{for all}\quad
  \xi \in \R^{n},\quad\text{all}\quad e\in\range(\hat{D}(\xi)),\quad\text{and some}\quad \kappa>0,
\end{equation*}
\begin{equation*}\tag{D2}\label{D2}
  \text{there exists}\quad \omega \in [0,\frac{\pi}{2})\quad \text{such that for all}\quad \xi \in \R^{n}: \quad
  \sigma(\hat{D}(\xi)) \subseteq S_{\omega}.
\end{equation*}
\end{assumption} 

In each $L^p$,  let $D$ act on its natural domain $\domain_p(D):=\{u \in L^p\;;\; Du \in L^{p}\}$.   In Theorem~\ref{thm:Dagain} we prove: 

\begin{theorem}
\label{thm:D}
Let $1<p<\infty$. Under the assumptions \eqref{D1} and \eqref{D2}, the operator $D$ is bisectorial   in $L^p$ with angle $\omega$,   and has a bounded $H^{\infty}$ functional calculus in $L^p$    with angle $\omega$.   
\end{theorem}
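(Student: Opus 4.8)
The plan is to reduce the $H^\infty$ calculus bound for $D$ in $L^p$ to a Mikhlin–Hörmander Fourier multiplier estimate, using assumptions \eqref{D1} and \eqref{D2} to control the symbol of $\psi(D)$ uniformly in $\psi \in H^\infty_0(S_\nu)$. The key point is that the functional calculus of the multiplier operator $D$ is realised pointwise in frequency by the holomorphic functional calculus of the matrix $\hat D(\xi) \in \bddlin(\C^N)$: for $\psi \in H^\infty_0(S_\nu)$ one has $\widehat{\psi(D)u}(\xi) = \psi(\hat D(\xi))\hat u(\xi)$, where $\psi(\hat D(\xi))$ is defined by the Dunford–Riesz integral $\frac{1}{2i\pi}\int_{\partial S_\omega}\psi(\lambda)(\lambda - \hat D(\xi))^{-1}\ud\lambda$. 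So the theorem amounts to showing that the matrix-valued symbol $m_\psi(\xi) := \psi(\hat D(\xi))$ satisfies Mikhlin bounds $\abs{\partial^\alpha m_\psi(\xi)} \lesssim \Norm{\psi}{\infty}\abs{\xi}^{-\abs\alpha}$ for $\abs\alpha \le \lfloor n/2\rfloor + 1$, with constants independent of $\psi$; then Mikhlin's theorem gives $\Norm{\psi(D)}{\bddlin(L^p)}\lesssim_p \Norm{\psi}{\infty}$, and the general $H^\infty(S_\nu)$ calculus follows by the usual limiting/convergence-lemma argument (using the decomposition $L^p = \kernel(D)\oplus\overline{\ran(D)}$, which exists once bisectoriality is in place). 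Bisectoriality itself, with angle $\omega$, is the special case where $\psi(\lambda)$ is replaced by $\lambda(\lambda-\cdot)^{-1}$ for $\lambda \notin S_{\theta'}$; the same symbol estimates (applied to the resolvent rather than to $\psi$) give it, and by homogeneity it suffices to check these on $\abs\xi = 1$.

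The heart of the matter is the uniform resolvent and derivative estimates for $\hat D(\xi)$. By homogeneity $\hat D(\xi) = \abs{\xi}^k \hat D(\xi/\abs{\xi})$, so it suffices to bound $(\lambda - \hat D(\xi))^{-1}$ and its $\xi$-derivatives for $\abs\xi = 1$ and then rescale. Assumption \eqref{D2} places the spectrum of $\hat D(\xi)$ in $S_\omega$, but since $\kernel(\hat D(\xi))$ may be nontrivial, $\hat D(\xi)$ need not be invertible and $0 \in S_\omega$, so the naive resolvent bound outside a bisector around $S_\omega$ must be obtained with care near $\lambda = 0$. This is exactly where \eqref{D1} enters: it says $\hat D(\xi)$ is bounded below on its range by $\kappa\abs\xi^k$, equivalently that the nonzero part of $\hat D(\xi)$ is invertible with uniformly controlled inverse. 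Decomposing $\C^N = \kernel(\hat D(\xi)) \oplus \ran(\hat D(\xi))$ — not necessarily orthogonally — with a spectral projection that is itself uniformly bounded (here one must check the angle between kernel and range stays bounded away from $0$; this should again follow from \eqref{D1} together with a compactness argument over $\abs\xi=1$, since $\hat D$ is polynomial), one writes $\hat D(\xi) = 0 \oplus \hat D(\xi)|_{\ran}$ and obtains $(\lambda - \hat D(\xi))^{-1} = \lambda^{-1}P_{\kernel} \oplus (\lambda - \hat D(\xi)|_{\ran})^{-1}P_{\ran}$, with the second summand controlled by the standard finite-dimensional estimate for matrices with spectrum in a sector and bounded norm. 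Thus $\abs{\lambda(\lambda - \hat D(\xi))^{-1}} \lesssim 1$ for all $\lambda \notin S_{\theta'}$, uniformly on $\abs\xi = 1$, hence for all $\xi$ by scaling.

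For the derivative estimates, I would differentiate the resolvent identity: $\partial_{\xi_j}(\lambda - \hat D(\xi))^{-1} = (\lambda - \hat D(\xi))^{-1}(\partial_{\xi_j}\hat D(\xi))(\lambda - \hat D(\xi))^{-1}$, and iterate, so that $\partial^\alpha(\lambda - \hat D(\xi))^{-1}$ is a sum of products of resolvents and derivatives of $\hat D$ (polynomial in $\xi$). Combined with the uniform resolvent bound, on $\abs\xi = 1$ one gets $\abs{\partial^\alpha_\xi(\lambda - \hat D(\xi))^{-1}} \lesssim_\alpha \abs{\lambda}^{-1}(1 + \abs\lambda^{-1})^{\abs\alpha}$ or similar, integrable against $\psi(\lambda)$ along $\partial S_\omega$ because $\psi \in H^\infty_0$ decays at $0$ and $\infty$; differentiating under the Dunford integral then yields $\abs{\partial^\alpha_\xi m_\psi(\xi)} \lesssim_\alpha \Norm{\psi}{\infty}$ on $\abs\xi = 1$. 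Rescaling via $m_\psi(\xi) = \psi(\hat D(\xi)) = (\psi\circ(\abs\xi^k\,\cdot))(\hat D(\xi/\abs\xi))$ together with the chain rule produces the homogeneity-corrected Mikhlin bound $\abs{\partial^\alpha m_\psi(\xi)}\lesssim \Norm{\psi}{\infty}\abs\xi^{-\abs\alpha}$. The main obstacle I anticipate is the bookkeeping around the non-orthogonal kernel/range splitting — verifying that the spectral projection onto the nonzero eigenspace of $\hat D(\xi)$ is uniformly bounded and smooth in $\xi$ on the unit sphere (so that the $\lambda^{-1}P_\kernel$ term and its derivatives behave), rather than the finite-dimensional estimates, which are routine once the splitting is under control.
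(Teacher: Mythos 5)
Your overall strategy is the same as the paper's: realise $\psi(D)$ as a Fourier multiplier with matrix-valued symbol $\psi(\hat D(\xi))$, use the Dunford--Riesz integral together with homogeneity and the $\kernel(\hat D(\xi))\oplus\ran(\hat D(\xi))$ splitting to get Mikhlin bounds, and appeal to Mikhlin's theorem. The paper's Lemma~\ref{lem:spect}, Proposition~\ref{lem:projEst}, Lemma~\ref{lem:DRest}, Proposition~\ref{lem:resolvEst} and Proposition~\ref{prop:fcPi} implement exactly these ideas. However, there is a genuine gap in your final step.

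The issue is your claim that the bound $\abs{\partial^\alpha_\xi(\lambda-\hat D(\xi))^{-1}}\lesssim\abs{\lambda}^{-1}(1+\abs{\lambda}^{-1})^{\abs{\alpha}}$ on $\abs{\xi}=1$ is ``integrable against $\psi$ along $\partial S_\omega$'' and ``yields $\abs{\partial^\alpha m_\psi(\xi)}\lesssim_{\alpha}\Norm{\psi}{\infty}$.'' First, the iterated resolvent-identity bound is too crude near $\lambda=0$: each extra derivative inserts another resolvent factor of size $\abs{\lambda}^{-1}$ and produces a non-integrable singularity once $\abs{\alpha}\ge 1$ (indeed $\int_0\abs{\lambda}^{\alpha_\psi-1-\abs{\alpha}}\ud\abs{\lambda}$ diverges for the small exponents $\alpha_\psi$ allowed in $H_0^\infty$). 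The paper avoids this by working with the kernel/range decomposition from the start: on $\kernel(\hat D)$ one uses $\hat R_\tau\proj_{\kernel(\hat D)}=\proj_{\kernel(\hat D)}$, and on $\range(\hat D)$ the factor $-i\tau\hat D$ hiding in the resolvent is rewritten via $\hat D_R^{-1}\proj_{\range(\hat D)}$ as $(\hat R_\tau - I)\hat D_R^{-1}\proj_{\range(\hat D)}$, which kills the extra $\tau$; this yields the uniform bound $\abs{\partial^\alpha_\xi\hat R_\tau(\xi)}\lesssim\abs{\xi}^{-\abs{\alpha}}$ (Proposition~\ref{lem:resolvEst}) with no blow-up in $\lambda$. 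Second, and more fundamentally, even with the correct $O(\abs{\lambda}^{-1})$ bound you cannot deduce a $\psi$-uniform estimate from the $H_0^\infty$ decay, since the decay constants $C$ and $\alpha$ in the definition of $H_0^\infty(S_\nu)$ are individual to each $\psi$ and are not controlled by $\Norm{\psi}{\infty}$. To get the $\Norm{\psi}{\infty}$ bound one must use the quantitative spectral localization $\sigma(\hat D(\xi))\setminus\{0\}\subset\{\kappa\abs{\xi}^k\le\abs{\zeta}\le M\abs{\xi}^k\}$ from Lemma~\ref{lem:spect} and deform the Dunford contour to a compact set at distance $\eqsim\abs{\xi}^k$ from the origin, plus a small circle about $0$ whose contribution vanishes (this is exactly what Proposition~\ref{prop:fcPi} does); then $\abs{\psi(\zeta)}\le\Norm{\psi}{\infty}$ suffices and the path length is comparable to $\abs{\xi}^k$, giving the Mikhlin bound. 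Without this contour deformation the uniformity in $\psi$ --- the whole point of the theorem --- is not established.
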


\begin{remark}
 (a)   In \eqref{D2}, the bisector $S_{\omega}$ can be replaced by the sector $\Sigma_{\omega}$   where $0\leq \omega <\pi$.   The operator $D$ is then sectorial
(with   angle $\omega$)   and has a bounded $H^{\infty}$ functional calculus (with   angle $\omega$)   in the sectorial sense, i.e.   $f(D)$ is bounded   for functions $f\in H^{\infty}(\Sigma_{\theta})$ with any $\theta \in (\omega,\pi)$.

 (b)   Assuming that \eqref{D1} holds for all $e \in \C^{N}$ would place us in a more classical context, in which proofs are substantially simpler. We insist on this weaker ellipticity condition since the operators we want to handle have, in general, a non-trivial null space.

 (c)   Using Bourgain's version of Mikhlin's multiplier theorem   \cite{Bourgain:86} , the above theorem extends to functions with values in $X^{N}$, where $X$ is a UMD Banach space.
\end{remark}

\subsection{Hodge-Dirac operators with constant coefficients}\label{ss:unpert}
\label{subsec:unpert}
We now turn to first order   operators   of the form 
$$\Pi = \Gamma + \rGamma,$$
where $
  \Gamma=-i\sum_{j=1}^n\hat{\Gamma}_j\partial_j,$
acts  on $\mathscr{S}'(\R^{n};\C^{N})$ as a Fourier multiplier with symbol 
\begin{equation*}
  \hat{\Gamma}=\hat{\Gamma}(\xi)=\sum_{j=1}^n\hat{\Gamma}_j\xi_j, \quad \hat{\Gamma}_{j} \in \mathscr{L}(\C^{N}),
\end{equation*}
  the operator   $\rGamma$ is defined similarly, and both operators are nilpotent
in the sense that \(\hat{\Gamma}(\xi)^2=0\) and  \(\hat{\rGamma}(\xi)^2=0\) for all \(\xi\in\R^n\).

\begin{definition}
\label{def:unpert}
We call   \(\Pi=\Gamma+\rGamma\)   a \emph{Hodge-Dirac operator} with constant coefficients if its Fourier multiplier symbol \(\hat{\Pi}=\hat\Gamma+\hat\rGamma\) satisfies the following conditions:
\begin{equation*}\tag{$\Pi$1}\label{Pi1}
  \kappa\abs{\xi}\abs{e}\leq\abs{\hat{\Pi}(\xi)e} \quad
  \text{for all}\quad e\in\range(\hat{\Pi}(\xi)),\quad \text{all}\quad \xi \in \R^{n},\quad \text{and some}\quad \kappa>0,
\end{equation*}
\begin{equation*}\tag{$\Pi$2}\label{Pi2}
  \sigma(\hat{\Pi}(\xi)) \subseteq S_{\omega}\quad
  \text{for some}\quad \omega \in [0,\frac{\pi}{2}),\quad\text{and all}\quad \xi \in \R^{n},
\end{equation*}
\begin{equation*}\tag{$\Pi$3}\label{Pi3}
  \kernel(\hat{\Pi}(\xi))=\kernel(\hat{\Gamma}(\xi))\cap \kernel(\hat{\rGamma}(\xi))
  \quad \text{for all}\quad \xi \in \R^{n}.
\end{equation*}
\end{definition}

\begin{remark}
The ``Hodge-Dirac'' terminology has its origins in   applications of this formalism to   Riemannian geometry where $\Gamma$ would be the exterior derivative $d$ and $\rGamma=d^*$. See \cite{akm} for details.  
Note that we are working here in a more general setting than \cite{akm}, where the operator $\rGamma$ was assumed to be the adjoint of $\Gamma$. In particular, our operator $\Pi$ does not need to be self-adjoint   in   $L^2(\R^{n};\C^{N})$.  
\end{remark}

In each $L^p$, we let the operators \(\Upsilon\in\{\Gamma,\rGamma,\Pi\}\) act on their natural domains
\begin{equation*}
  \domain_p(\Upsilon):=\{u\in L^p:\Upsilon u\in L^p\},
\end{equation*}
where \(\Upsilon u\) is defined in the distributional sense. Each \(\Upsilon\) is a densely defined, closed unbounded operator in \(L^p\) with this domain. The formal nilpotence of \(\Gamma\) and \(\rGamma\) transfers into the operator-theoretic nilpotence
\begin{equation*}
  \overline{\range_{p}(\Gamma)}\subseteq\kernel_{p}(\Gamma),\qquad
  \overline{\range_{p}(\rGamma)}\subseteq\kernel_{p}(\rGamma).
\end{equation*}
where $\range_{p}(\Gamma),\kernel_{p}(\Gamma)$ denote the range and kernel of $\Gamma$ as an operator on $L^{p}$.

In Section~\ref{sect:Lptheory} we show that the identity \(\Pi=\Gamma+\rGamma\) is also true in the sense of unbounded operators in \(L^p\).   Moreover, in Theorem~\ref{thm:unpertAgain} we prove: 

\begin{theorem}\label{thm:unpert}
The operator \(\Pi\) has a bounded \(H^{\infty}\) functional calculus in \(L^p\)    with angle $\omega$,  and  satisfies  the Hodge decomposition  
\begin{equation*}
  L^{p} = \kernel_{p}(\Pi)\oplus \overline{\range_{p}(\Gamma)}\oplus \overline{\range_{p}(\rGamma)}.
\end{equation*}
\end{theorem}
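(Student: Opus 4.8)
The plan is to derive Theorem~\ref{thm:unpert} from the constant-coefficient Theorem~\ref{thm:D} by checking that the Hodge-Dirac operator $\Pi = \Gamma + \rGamma$ falls under its scope, and then to extract the Hodge decomposition from the functional calculus. First I would verify that $\Pi$, as a first-order homogeneous Fourier multiplier with symbol $\hat\Pi(\xi) = \hat\Gamma(\xi) + \hat\rGamma(\xi)$, satisfies assumptions \eqref{D1} and \eqref{D2} with $k=1$: but \eqref{D1} is literally \eqref{Pi1} and \eqref{D2} is literally \eqref{Pi2}. Hence Theorem~\ref{thm:D} applies and gives at once that $\Pi$ is bisectorial in $L^p$ with angle $\omega$ and has a bounded $H^\infty$ functional calculus in $L^p$ with angle $\omega$. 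The only subtlety in this first step is to confirm that the operator $\Pi$ defined via its symbol coincides with the sum $\Gamma + \rGamma$ in the sense of unbounded operators on $L^p$ (with the natural domains $\domain_p$); this is the identity announced as proved in Section~\ref{sect:Lptheory}, which we may invoke. In particular $\domain_p(\Pi) = \domain_p(\Gamma) \cap \domain_p(\rGamma)$.

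Next I would obtain the topological direct sum decomposition $L^p = \kernel_p(\Pi) \oplus \overline{\range_p(\Pi)}$. Since $\Pi$ has a bounded $H^\infty$ functional calculus on $L^p$, Remark~\ref{fcRsect} shows $\Pi$ is R-bisectorial, and for an R-bisectorial operator the spectral projection $\proj_0$ onto $\kernel_p(\Pi)$ along $\overline{\range_p(\Pi)}$ exists and is bounded — exactly the decomposition used in the definition of $f(\Pi)$ for $f \in H^\infty(S_\nu)$. It remains to split $\overline{\range_p(\Pi)}$ as $\overline{\range_p(\Gamma)} \oplus \overline{\range_p(\rGamma)}$ and to identify $\kernel_p(\Pi)$. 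For the latter, condition \eqref{Pi3} at the symbol level, $\kernel(\hat\Pi(\xi)) = \kernel(\hat\Gamma(\xi)) \cap \kernel(\hat\rGamma(\xi))$, should transfer to $\kernel_p(\Pi) = \kernel_p(\Gamma) \cap \kernel_p(\rGamma)$ by a Fourier-analytic argument: if $\Pi u = 0$ then $\hat\Pi(\xi)\hat u(\xi) = 0$ a.e., so $\hat u(\xi) \in \kernel(\hat\Gamma(\xi)) \cap \kernel(\hat\rGamma(\xi))$, whence $\Gamma u = \rGamma u = 0$; the reverse inclusion is trivial. The nilpotence inclusions $\overline{\range_p(\Gamma)} \subseteq \kernel_p(\Gamma)$ and $\overline{\range_p(\rGamma)} \subseteq \kernel_p(\rGamma)$, already recorded in the excerpt, combined with $\kernel_p(\Gamma) \cap \kernel_p(\rGamma) = \kernel_p(\Pi)$, will force the three closed ranges to interact correctly.

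For the decomposition of $\overline{\range_p(\Pi)}$ I would exploit the structure $\Pi^2 = \Gamma\rGamma + \rGamma\Gamma$ (the cross terms $\Gamma^2$ and $\rGamma^2$ vanish by nilpotence) and introduce the bounded projections $Q := \Pi^{-1}\Gamma$ and $\tilde Q := \Pi^{-1}\rGamma$ on $\overline{\range_p(\Pi)}$, defined through the functional calculus (writing $\Pi^{-1}$ for the bounded inverse of $\Pi$ restricted to $\overline{\range_p(\Pi)}$, or more carefully via approximants $\psi_n(\Pi)$). One checks $Q + \tilde Q = I$ on $\overline{\range_p(\Pi)}$, that $Q$ and $\tilde Q$ are idempotent, and that $\range(Q) = \overline{\range_p(\Gamma)}$, $\range(\tilde Q) = \overline{\range_p(\rGamma)}$; boundedness of these projections is automatic since $\Gamma\Pi^{-1}$, $\rGamma\Pi^{-1}$ are bounded on $\overline{\range_p(\Pi)}$ (this is where the functional calculus, not merely R-bisectoriality, is used, mirroring the $L^2$ Hodge theory in \cite{akm}). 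I expect the main obstacle to be precisely this last point: making rigorous the manipulations with the possibly unbounded operators $\Gamma, \rGamma$ and the auxiliary operator $\Pi^{-1}$ on the non-closed range, and in particular verifying that $\range_p(\Gamma) \cap \range_p(\rGamma)$ behaves well and that the putative projections are genuinely complementary — in $L^2$ one uses self-adjointness and orthogonality, which are unavailable here, so the argument must rely on the functional calculus together with \eqref{Pi3}. The analogous $L^2$ argument in \cite{akm} provides the template, and the $L^p$ boundedness of the relevant projections is supplied by the bounded $H^\infty$ calculus just established, so the adaptation is expected to be routine once the operator-theoretic identities are set up with care.
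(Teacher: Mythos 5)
Your first step is correct and matches the paper: conditions \eqref{Pi1}, \eqref{Pi2} are exactly \eqref{D1}, \eqref{D2} with $k=1$, so Theorem~\ref{thm:D} supplies the bounded $H^\infty$ calculus with angle $\omega$; and the operator identity $\Pi=\Gamma+\rGamma$ together with $L^p=\kernel_p(\Pi)\oplus\overline{\range_p(\Pi)}$ are available from the preceding material in Section~\ref{sect:Lptheory}. From that point on, however, the proposal diverges from the paper and has two genuine gaps.

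\textbf{Kernel identification.} You claim that $\Pi u=0$ gives $\hat\Pi(\xi)\hat u(\xi)=0$ a.e.\ and hence, by \eqref{Pi3}, $\hat u(\xi)\in\kernel(\hat\Gamma(\xi))\cap\kernel(\hat\rGamma(\xi))$ a.e. This pointwise a.e.\ reasoning does not make sense for general $p$: for $p>2$ the Fourier transform $\hat u$ is merely a tempered distribution, not a function, so it has no pointwise values. Even the conclusion ``$\Gamma u=0$'' requires care: one must rule out contributions supported at the origin. This is exactly what Lemma~\ref{lem:kernelInters} in the paper is designed to do. It multiplies $\hat\Pi\hat u=0$ by the smooth (away from $0$) symbol $\Upsilon(\xi)=\hat\Gamma(\xi)\hat\Pi_R^{-1}(\xi)\proj_{\range(\hat\Pi(\xi))}$, uses \eqref{Pi3} to see $\Upsilon\hat\Pi=\hat\Gamma$, deduces that the distribution $\hat\Gamma\hat u$ is supported at the origin so $\Gamma u$ is a polynomial, and then shows this polynomial vanishes because $\Gamma u$ is a sum of distributional derivatives of $L^p$ functions. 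None of this is present in your sketch, and the distributional step cannot be waved away.

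\textbf{Range decomposition.} For the splitting of $\overline{\range_p(\Pi)}$ you introduce $Q=\Pi^{-1}\Gamma$, $\tilde Q=\Pi^{-1}\rGamma$ and assert that their boundedness ``is automatic since $\Gamma\Pi^{-1}$, $\rGamma\Pi^{-1}$ are bounded on $\overline{\range_p(\Pi)}$''. This is not automatic from the $H^\infty$ calculus alone: the bounded functional calculus gives bounds for $\psi(\Pi)$ with $\psi$ holomorphic on a bisector, and $\Gamma\Pi^{-1}$ is not of that form. What actually makes $\Gamma\Pi^{-1}$ bounded on $\overline{\range_p(\Pi)}$ is the coercivity estimate of Proposition~\ref{prop:coer}, namely $\Norm{\nabla u}{p}\lesssim\Norm{\Pi u}{p}$ for $u\in\domain_p(\Pi)\cap\overline{\range_p(\Pi)}$ --- a nontrivial consequence of Corollary~\ref{cor:sigmaLambda} for the symbols $t^2\xi^\theta\hat D(\xi)(I+t^2\hat D(\xi)^2)^{-1}$. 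The paper's own proof uses precisely this estimate, but in a different and more elementary way: given $u=\lim_j\Pi y_j$ with $y_j\in\domain_p(\Pi)\cap\overline{\range_p(\Pi)}$, the coercivity forces $\Gamma y_j$ and $\rGamma y_j$ to be Cauchy, giving $u=v+w$ with $v\in\overline{\range_p(\Gamma)}$, $w\in\overline{\range_p(\rGamma)}$; then one verifies the reverse inclusion $\range_p(\Gamma)\subseteq\range_p(\Pi)$ by nilpotence and Lemma~\ref{lem:kernelInters}, and the directness from $\overline{\range_p(\Gamma)}\cap\overline{\range_p(\rGamma)}\subseteq\overline{\range_p(\Pi)}\cap\kernel_p(\Pi)=\{0\}$. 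Your projection route can probably be made to work along the lines you hint at, and it does buy explicit formulas for the Hodge projections, but both routes hinge on the same inputs (Proposition~\ref{prop:coer} and Lemma~\ref{lem:kernelInters}), which your sketch neither proves nor correctly identifies as the load-bearing pieces.
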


\begin{remark}
As in the previous subsection, the above theorem extends to functions with values in $X^{N}$, where $X$ is a UMD Banach space.
\end{remark}

\subsection{Hodge-Dirac operators with variable coefficients}\label{ss:pert}
We finally turn to Hodge-Dirac operators with variable coefficients. The study of such operators is motivated by \cite{aam}, \cite{akm} and \cite{hmp}.

\begin{definition}
\label{def:pert}
Let $1<p<\infty$ and $p'$ denote the dual exponent of $p$. Let
\begin{equation*}
  B_1,B_2\in L^{\infty}(\R^{n};\bddlin(\C^{N})), 
\end{equation*}
and identify these functions with bounded multiplication operators on \(L^p\) in the natural way.   Also let $\Pi=\Gamma+\rGamma$ be a Hodge-Dirac operator. Then
the operator 
\begin{equation}\label{eq:defPiB}
   \Pi_B:=\Gamma+\rGamma_B, \quad  \text{where} \;  \rGamma_B:=B_1\rGamma B_2,
\end{equation}
is called a \emph{Hodge-Dirac operator with variable coefficients} in $L^p$ if the following hold:
\begin{equation*}\tag{B1}
\label{nilpcond}
\rGamma B_{2}B_{1} \rGamma = 0 \quad \text{on} \quad \mathscr{S}(\R^{n};\C^{N}),
\end{equation*}
\begin{equation*}\tag{B2}
\label{coercond}
\Norm{u}{p} \lesssim \Norm{B_{1}u}{p} \quad    \forall u \in \range_{p}(\rGamma) \quad \text{and}  \quad \Norm{v}{p'} \lesssim \Norm{B_2^{*} v}{p'} \quad
\forall v \in \range_{p'}(\rGamma^{*}).
\end{equation*}
\end{definition}

Note that the operator equality \eqref{eq:defPiB}, involving the implicit domain condition $\domain_p(\Pi_B):=\domain_p(\Gamma)\cap\domain_p(\rGamma_B)$, was a proposition for Hodge-Dirac operators with constants coefficients, but is taken as the definition for Hodge-Dirac operators with variable coefficients.

The following simple consequences will be frequently applied. Their proofs are left to the reader. First, the nilpotence condition~\eqref{nilpcond}, a priori formulated for test functions, self-improves to
\begin{equation*}
  \rGamma B_{2}B_{1} \rGamma = 0 \quad \text{on} \quad \domain_p(\rGamma);\quad\text{hence}\quad
  \overline{\range_p(\rGamma_B)}\subseteq\kernel_p(\rGamma_B).
\end{equation*}
Second, the coercivity condition~\eqref{coercond} implies that
\begin{equation*}
  \overline{\range_p(\rGamma_B)}=B_1\overline{\range_p(\rGamma B_2)}=B_1\overline{\range_p(\rGamma)},
\end{equation*}
and
\begin{equation*}
  B_1:\overline{\range_p(\rGamma)}\to\overline{\range_p(\rGamma_B)}\quad
  \text{is an isomorphism}.
\end{equation*} 

Sometimes, we also need to assume that the related operator $\underline{\Pi}_B = \rGamma + B_{2}\Gamma B_{1}$ is a Hodge-Dirac operator with variable coefficients in $L^p$, i.e.
\begin{align*}
\Gamma B_{1}B_{2} \Gamma = 0 \quad &\text{on} \quad \mathscr{S}(\R^{n};\C^{N}),\\
\Norm{u}{p} \lesssim \Norm{B_{2}u}{p} \quad    \forall u \in \range_{p}(\Gamma)\quad  &  \text{and} \quad  
\Norm{v}{p'} \lesssim \Norm{B_1^{*} v}{p'} \quad \forall v \in \range_{p'}(\Gamma^{*}).
\end{align*}

With the same proof as in \cite[Lemma~4.1]{akm}, one can show:

\begin{proposition}
Assuming   $\Pi_B=\Gamma+\rGamma_B$ is a Hodge-Dirac operator with variable coefficients,  
 then the operators \(\rGamma_B:=B_1\rGamma B_2\) and \(\rGamma_B^*:=B_2^*\rGamma^* B_1^*\) are closed, densely defined, nilpotent operators in $L^{p}$ and $L^{p'}$ repectively, and \(\rGamma_B^*=(\rGamma_B)^*\).
\end{proposition}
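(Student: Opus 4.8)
The plan is to follow the proof of \cite[Lemma~4.1]{akm}, isolating the general operator-theoretic facts it rests on and checking that the coercivity condition~\eqref{coercond} is a sufficient substitute for the strict accretivity (hence invertibility) of $B_1,B_2$ used there. Throughout I would use that $\rGamma$ is closed, densely defined and nilpotent in $L^p$, that $\rGamma^*$ has the same properties in $L^{p'}$, and that $B_1,B_2$ are everywhere-defined bounded operators. Nilpotence of $\rGamma_B=B_1\rGamma B_2$ is immediate from the self-improved form of~\eqref{nilpcond} recorded above, namely $\overline{\range_p(\rGamma_B)}\subseteq\kernel_p(\rGamma_B)$; dualizing $\rGamma B_2B_1\rGamma=0$ on $\domain_p(\rGamma)$ gives $\rGamma^*B_1^*B_2^*\rGamma^*=0$, so $\rGamma_B^*=B_2^*\rGamma^*B_1^*$ is nilpotent as well.

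First I would prove closedness of $\rGamma_B$. Suppose $u_j\to u$ and $\rGamma_Bu_j\to w$ in $L^p$. Then $B_2u_j\to B_2u$, while $g_j:=\rGamma B_2u_j$ lies in $\range_p(\rGamma)$ and $B_1g_j=\rGamma_Bu_j$ is Cauchy; since $\eqref{coercond}$ gives $\Norm{g}{p}\lesssim\Norm{B_1g}{p}$ for $g\in\range_p(\rGamma)$, the sequence $(g_j)_j$ is Cauchy, say $g_j\to g$. Closedness of $\rGamma$ yields $B_2u\in\domain_p(\rGamma)$ and $\rGamma B_2u=g$, hence $u\in\domain_p(\rGamma_B)$ and $\rGamma_Bu=B_1g=w$. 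Running the same argument in $L^{p'}$ with the second half of~\eqref{coercond} shows that $\rGamma_B^*=B_2^*\rGamma^*B_1^*$ is closed.

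The heart of the matter is the adjoint identity together with dense definedness, and this is where I expect the main obstacle to lie, since this is precisely where \cite{akm} invokes invertibility of $B_1,B_2$. Because $B_1$ is bounded and everywhere defined, the standard rule $(TS)^*=S^*T^*$ (valid for any bounded $T$) gives $(\rGamma_B)^*=(\rGamma B_2)^*B_1^*$, so it remains to identify $(\rGamma B_2)^*=B_2^*\rGamma^*$. The inclusion $B_2^*\rGamma^*\subseteq(\rGamma B_2)^*$ is a direct pairing computation. For the reverse inclusion I would translate~\eqref{coercond} by duality: the lower bound $\Norm{B_2^*v}{p'}\gtrsim\Norm{v}{p'}$ on $\overline{\range_{p'}(\rGamma^*)}$ is equivalent to surjectivity of $B_2$ modulo $\big(\overline{\range_{p'}(\rGamma^*)}\big)^{\perp}=\kernel_p(\rGamma)$ (using $\rGamma^{**}=\rGamma$), i.e. to the algebraic identity $\range(B_2)+\kernel_p(\rGamma)=L^p$. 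Hence any $w\in\domain_p(\rGamma)$ can be written $w=B_2u+k$ with $k\in\kernel_p(\rGamma)\subseteq\domain_p(\rGamma)$, so $B_2u=w-k\in\domain_p(\rGamma)$ with $\rGamma B_2u=\rGamma w$; inserting this into the defining relation $\pair{\rGamma B_2u}{v}=\pair{u}{z}$ of $v\in\domain\big((\rGamma B_2)^*\big)$, together with $z\perp\kernel(B_2)$ (so $z=B_2^*\zeta$), yields $v\in\domain_p(\rGamma^*)$, $\rGamma^*v=\zeta$, and therefore $(\rGamma B_2)^*v=B_2^*\rGamma^*v$. The same complementation shows $\domain_p(\rGamma_B)$ is dense in $L^p$.

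Finally, I would run the entire argument of the previous two paragraphs with $(p,\rGamma,B_1,B_2)$ replaced by $(p',\rGamma^*,B_2^*,B_1^*)$ — both halves of~\eqref{coercond} being used, one for each side — to conclude that $\rGamma_B^*=B_2^*\rGamma^*B_1^*$ is closed and densely defined and that $(\rGamma_B^*)^*=B_1\rGamma B_2=\rGamma_B$, where reflexivity of $L^p$ together with $\rGamma^{**}=\rGamma$ and $B_i^{**}=B_i$ are used. In particular $(\rGamma_B)^*=(\rGamma_B^{**})^*=\rGamma_B^*$. The one genuinely new ingredient relative to \cite{akm} is the dual reformulation $\range(B_2)+\kernel_p(\rGamma)=L^p$ (and its $L^{p'}$ analogue); once this is in place the rest is verbatim bookkeeping with adjoints of compositions.
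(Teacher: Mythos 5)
Your overall plan is sound and in the spirit of the proof in \cite[Lemma~4.1]{akm}: the dual reformulation of the coercivity as $\range(B_2)+\kernel_p(\rGamma)=L^p$ is exactly the right substitute for invertibility of $B_2$, and the reduction $(B_1\rGamma B_2)^*=(\rGamma B_2)^*B_1^*$ via the rule $(TS)^*=S^*T^*$ for bounded $T$ is correct. The closedness and nilpotence arguments are fine.

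However, there is a genuine gap in the step ``together with $z\perp\kernel(B_2)$ (so $z=B_2^*\zeta$)''. From $z\perp\kernel(B_2)$ you only get $z\in\kernel(B_2)^\perp=\overline{\range(B_2^*)}$, the \emph{norm closure}; you cannot conclude $z\in\range(B_2^*)$ unless $\range(B_2^*)$ is closed. Nothing in the hypotheses guarantees this: the coercivity~\eqref{coercond} only gives that $B_2^*$ is bounded below on $\overline{\range_{p'}(\rGamma^*)}$, hence has closed range when restricted to that subspace, and the algebraic identity $\range(B_2)+\kernel_p(\rGamma)=L^p$ says nothing about $\range(B_2)$ itself being closed. (And even granting $z=B_2^*\zeta$, one would still have to argue that $\zeta$ can be modified by an element of $\kernel(B_2^*)$ to lie in $\kernel_p(\rGamma)^\perp$, which you do not address.) The conclusion $z\in\range(B_2^*)$ is in fact exactly what the proposition asserts, so as written the step is circular.

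The repair is to make your open-mapping input quantitative and bypass the lifting of $z$. The map $L^p\oplus\kernel_p(\rGamma)\ni(u,k)\mapsto B_2u+k\in L^p$ is a bounded surjection, hence open, so for each $w\in\domain_p(\rGamma)$ one can choose $(u,k)$ with $w=B_2u+k$, $k\in\kernel_p(\rGamma)$, and $\Norm{u}{p}+\Norm{k}{p}\lesssim\Norm{w}{p}$. Then $B_2u=w-k\in\domain_p(\rGamma)$, $u\in\domain_p(\rGamma B_2)$, $\rGamma B_2u=\rGamma w$, and
\begin{equation*}
  \abs{\pair{\rGamma w}{v}}
  =\abs{\pair{\rGamma B_2u}{v}}
  =\abs{\pair{u}{z}}
  \leq\Norm{u}{p}\Norm{z}{p'}
  \lesssim\Norm{w}{p}\Norm{z}{p'}.
\end{equation*}
This shows directly that $v\in\domain_{p'}(\rGamma^*)$. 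Once that is known, $\pair{u}{z}=\pair{\rGamma B_2 u}{v}=\pair{B_2u}{\rGamma^*v}=\pair{u}{B_2^*\rGamma^*v}$ for all $u\in\domain_p(\rGamma B_2)$, and density of $\domain_p(\rGamma B_2)$ yields $z=B_2^*\rGamma^*v$ — so $z\in\range(B_2^*)$ drops out as a conclusion, not a hypothesis. The same open-mapping argument also substantiates your assertion that $\domain_p(\rGamma_B)=B_2^{-1}(\domain_p(\rGamma))$ is dense, which you only gesture at (``the same complementation shows\dots''): given $u\in L^p$, pick $d_n\in\domain_p(\rGamma)$ with $d_n\to B_2u$, write $d_n-B_2u=B_2w_n+k_n$ with $w_n\to0$ and $k_n\in\kernel_p(\rGamma)\subseteq\domain_p(\rGamma)$, and observe $B_2(u+w_n)=d_n-k_n\in\domain_p(\rGamma)$, so $u+w_n\in\domain_p(\rGamma_B)$ converges to $u$.
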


However, the Hodge-decomposition and resolvent bounds, which in the context of \cite{akm} (and the first-mentioned one even in \cite{hmp}) could be established as propositions, are now properties which may or may not be satisfied:

\begin{definition}
We say that $\Pi_B$ \emph{Hodge-decomposes} $L^p$ if
\begin{equation*}
  L^{p} = \kernel_{p}(\Pi_{B})\oplus \overline{\range_{p}(\Gamma)}\oplus \overline{\range_{p}(\rGamma_{B})}.
\end{equation*}
\end{definition}

\begin{remark}\label{rem:interpolation}
We will mostly be interested in a Hodge-Dirac operator $\Pi_B$ with the property that it is R-bisectorial in $L^p$ and Hodge-decomposes $L^p$. If this property holds for two exponents $p\in\{p_1,p_2\}$, then it holds for the intermediate values $p\in(p_1,p_2)$ as well, and hence the set of exponents $p$, for which the mentioned property is satisfied, is an interval.

The proof that R-bisectoriality interpolates in these spaces can be found in \cite[Corollary 3.9]{kkw}, where it is formulated for R-sectorial operators. As for the Hodge-decomposition, observe first that if a Hodge-Dirac operator $\Pi_B$ is R-bisectorial in $L^p$ and Hodge-decomposes $L^p$, then the projections onto the three Hodge subspaces are given by
\begin{equation*}
  \proj_0=\lim_{t\to\infty}(I+t^2\Pi_B^2)^{-1},\quad
  \proj_{\Gamma}=\lim_{t\to\infty}t^2\Gamma\Pi_B(I+t^2\Pi_B)^{-1},\quad
  \proj_{\rGammaSmall_B}=\lim_{t\to\infty}t^2\rGamma_B\Pi_B(I+t^2\Pi_B)^{-1},
\end{equation*}
where the limits are taken in the strong operator topology. In particular, if $\Pi_B$ has these properties in two different $L^p$ spaces, then the corresponding Hodge subspaces have common projections, and one deduces the boundedness of these projection operators also in the interpolation spaces.
\end{remark}
 
The following main result concerning the operators $\Pi_B$ gives a characterization of the boundedness of their $H^{\infty}$ functional calculus. It will be proven as Corollary~\ref{cor:char} to Theorem~\ref{thm:char}.

\begin{theorem}\label{thm:base}
Let \(1\leq p_{1}<p_{2} \leq \infty\), 
and let \(\Pi_{B}\) be a Hodge-Dirac operator with variable coefficients in $L^p$ which Hodge-decomposes \(L^{p}\) for all \(p \in (p_{1},p_{2})\).
Assume also that $\underline{\Pi}_B$ is a Hodge-Dirac operator with variable coefficients in $L^p$.
Then \(\Pi_{B}\) has a bounded \(H^{\infty}\) functional calculus (with angle $\mu$) in \(L^{p}(\R^{n};\C^{N})\) for all \(p \in (p_{1},p_{2})\) if and only if it is \(R\)-bisectorial (with angle $\mu$) in \(L^{p}(\R^{n};\C^{N})\)  for all \(p \in (p_{1},p_{2})\).
\end{theorem}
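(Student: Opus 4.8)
The plan is to reduce Theorem~\ref{thm:base} to a quadratic-estimate characterization of the $H^\infty$ functional calculus, following the scheme of \cite{akm} and \cite{hmp}, and then to supply the two directions of the equivalence separately. The nontrivial implication is that $R$-bisectoriality implies bounded $H^\infty$ functional calculus; the converse is Remark~\ref{fcRsect}. So I would first fix $p\in(p_1,p_2)$, assume $\Pi_B$ is $R$-bisectorial in $L^p$, and aim to prove the square function estimate
\begin{equation*}
  \Big(\int_0^\infty \Norm{t\Pi_B(I+t^2\Pi_B^2)^{-1}u}{p}^2\,\frac{\ud t}{t}\Big)^{1/2}\eqsim\Norm{u}{p},\qquad u\in\overline{\range_p(\Pi_B)},
\end{equation*}
together with its analogue for $\Pi_B^*$ in $L^{p'}$ (which is legitimate because the dual hypotheses are built into the definition of a Hodge-Dirac operator with variable coefficients and because $\underline{\Pi}_B$ is assumed to be one as well, so the structural hypotheses are symmetric under passing to adjoints). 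By the standard duality-plus-quadratic-estimate argument (as recalled in \cite{akm}, \cite{hmp}), these two estimates give a bounded $H^\infty$ functional calculus on $\overline{\range_p(\Pi_B)}$, and the Hodge decomposition of $L^p$ extends this to all of $L^p$ by setting $f(\Pi_B)=f(0)\proj_0$ on $\kernel_p(\Pi_B)$, using the projections from Remark~\ref{rem:interpolation}.

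The heart of the matter is the quadratic estimate, and here I would invoke the machinery of \cite{hmp} essentially verbatim, since (as the authors state) ``the core of the argument is contained in \cite{hmp}''. Concretely, one splits $\Pi_B$ into its principal part and writes the relevant operators $\Theta_t^B:=t\rGamma_B(I+t^2\Pi_B^2)^{-1}$ and the analogues with $\Gamma$; the proof then runs through (i) off-diagonal $L^p$ estimates for the resolvents $(I+it\Pi_B)^{-1}$ and for $\Theta_t^B$, obtained from the $R$-bisectoriality hypothesis by the Blunck--Kunstmann/Auscher--Martell-type extrapolation already set up in \cite{hmp}; (ii) a Carleson-measure/stopping-time argument comparing $\Theta_t^B$ with its action on the constant-coefficient model, where Theorem~\ref{thm:unpert} supplies the unperturbed quadratic estimate and Hodge decomposition; and (iii) the principal part approximation, which requires the $L^p$ coercivity condition~\eqref{coercond} and the RMF-type property of $L^p$ to pass from averaged to pointwise estimates. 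The one genuinely new ingredient, relative to \cite{hmp}, is that the Kato setting there had $\rGamma=\Gamma^*$ with $\Gamma$ a gradient-type operator, whereas now $\Gamma,\rGamma$ are general nilpotent first-order symbols; this is exactly where one needs the constant-coefficient results of Sections~4--5 (Theorem~\ref{thm:D} and Theorem~\ref{thm:unpert}) and the perturbed Hodge decomposition of Section~6, so I would cite those in place of the ad hoc computations of \cite{hmp}.

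I expect the main obstacle to be the principal part approximation in step (ii)--(iii): showing that $\Theta_t^B u$ is well approximated, in the relevant tent-space norm, by $\Theta_t^B(\mathbf{1}) (A_t u)$ where $A_t$ is a dyadic averaging operator, and that the error is controlled by a Carleson norm times $\Norm{u}{p}$. In \cite{hmp} this rests on cancellation $\Theta_t^B 1 = 0$-type identities and on the specific algebraic form of the operators; with general nilpotent $\Gamma,\rGamma$ one must re-derive the correct ``constants'' (elements of $\kernel(\hat\Gamma(0))$, etc.) that $\Theta_t^B$ annihilates, using condition~\eqref{Pi3} and its perturbed analogue, and check that the off-diagonal decay is uniform. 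Once this is in place, the Carleson embedding theorem and the $R$-bisectoriality hypothesis close the estimate, and the interpolation remark~\ref{rem:interpolation} ensures the conclusion is stated correctly on the open interval $(p_1,p_2)$ with a common angle $\mu$. The reverse implication, and the identification of the angle, then follow from Remark~\ref{fcRsect} and the observation that the quadratic estimates above in fact hold with angle arbitrarily close to the angle of $R$-bisectoriality.
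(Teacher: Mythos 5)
Your overall plan (reduce to randomized quadratic estimates, prove them \`a la \cite{hmp}, conclude via a Kalton--Weis-type lemma) is in the right territory, and you correctly flag the principal-part/Carleson step as the technical core. But there are two substantive gaps.

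First, and most importantly, you misidentify the role of the hypothesis that $\underline{\Pi}_B=\rGamma+B_2\Gamma B_1$ is a Hodge-Dirac operator. You say it is needed so that ``the structural hypotheses are symmetric under passing to adjoints,'' but that is not where it enters: the adjoint structure comes for free from Lemma~\ref{lem:dual}, which shows $\Pi_B^*$ is already a Hodge-Dirac operator with variable coefficients from the hypotheses on $\Pi_B$. The real issue is that the harmonic-analytic machinery (Theorem~\ref{thm:char}) proves the quadratic estimate
\begin{equation*}
  \Exp\BNorm{\sum_k\radem_k Q_{2^k}^B u}{p}\lesssim\Norm{u}{p}
\end{equation*}
\emph{only} for $u\in\overline{\range_p(\Gamma)}$, not on all of $\overline{\range_p(\Pi_B)}$; the Carleson/principal-part argument is keyed to that single Hodge component, and your plan to prove it ``for $u\in\overline{\range_p(\Pi_B)}$'' directly is not how the argument goes. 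To cover the other component $\overline{\range_p(\rGamma_B)}$, the paper applies Theorem~\ref{thm:char} to $\underline{\Pi}_B$ (hence the standing hypothesis, together with Lemma~\ref{lem:tilde} which transfers R-bisectoriality and the Hodge decomposition to $\underline{\Pi}_B$), obtains the estimate for $\underline{\Pi}_B$ on $\overline{\range_p(\rGamma)}$, and then converts it into the desired estimate for $Q_{2^k}^B$ on $\overline{\range_p(\rGamma_B)}$ by the algebraic identity $t\underline{\Pi}_B(I+t^2\underline{\Pi}_B^2)^{-1}u = tB_2\Pi_B(I+t^2\Pi_B^2)^{-1}B_1 u$ for $u\in\overline{\range_p(\rGamma)}$ together with the coercivity~\eqref{coercond}. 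Without this device you do not get the full quadratic estimate, and the theorem would not follow.

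Second, the square function estimate you wrote down,
$\big(\int_0^\infty\Norm{t\Pi_B(I+t^2\Pi_B^2)^{-1}u}{p}^2\,\frac{\ud t}{t}\big)^{1/2}\eqsim\Norm{u}{p}$,
is not the correct object in $L^p$ for $p\neq 2$: it integrates the \emph{square of the $L^p$-norm}, which is a scalar quantity not equivalent to $\Norm{u}{p}$ outside $p=2$. The paper uses the discrete randomized form $\Exp\Norm{\sum_k\radem_k Q_{2^k}^B u}{p}$, and Propositions~\ref{prop:fund}--\ref{prop:funddual} are formulated precisely for this. (Minor further point: the off-diagonal $L^p$ bounds here come directly from R-bisectoriality and commutator estimates as in \cite[Prop.~6.4]{hmp}, not from a Blunck--Kunstmann/Auscher--Martell extrapolation scheme, which the paper explicitly does not use.)
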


This characterization leads to perturbation results such as the following, proven in Corollary~\ref{cor:pert}, thanks to the perturbation properties of R-bisectoriality.

\begin{corollary}\label{cor:base}
Let \(1\leq p_{1}<p_{2} \leq \infty\), and let \(\Pi_{A}\) be a Hodge-Dirac operator with variable coefficients, which is R-bisectorial in $L^p$ and 
Hodge-decomposes \(L^{p}\) for all \(p \in (p_{1},p_{2})\).
Then for each $p\in(p_1,p_2)$, there exists \(\delta=\delta_p>0\) such that,
if \(\Pi_{B}\) and $\underline{\Pi}_B$  are Hodge-Dirac operators with variable coefficients, and 
if   $\Norm{B_{1}-A_{1}}{\infty}+\Norm{B_{2}-A_{2}}{\infty} < \delta$,   then
\(\Pi_{B}\) has a bounded \(H^{\infty}\) functional calculus in \(L^p\) and Hodge-decomposes \(L^{p}\).
\end{corollary}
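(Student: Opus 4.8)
The plan is to combine the characterization in Theorem~\ref{thm:base} with the known perturbation stability of $R$-bisectoriality and of the Hodge decomposition. Since $\Pi_A$ is $R$-bisectorial and Hodge-decomposes $L^p$ for all $p\in(p_1,p_2)$, by Theorem~\ref{thm:base} it automatically has a bounded $H^\infty$ functional calculus on the same range; but for the corollary we only need $R$-bisectoriality of $\Pi_A$ as the starting point. Fix $p\in(p_1,p_2)$ and choose $p_1<q_1<p<q_2<p_2$, so that $\Pi_A$ enjoys both properties at $q_1$ and $q_2$. The strategy is: (i) show that for $\Norm{B_i-A_i}{\infty}$ small, $\Pi_B$ Hodge-decomposes $L^{q}$ for $q\in\{q_1,q_2\}$ (and hence, by Remark~\ref{rem:interpolation}, for all $q\in(q_1,q_2)$, in particular $q=p$); (ii) show that under the same smallness, $\Pi_B$ is $R$-bisectorial in $L^{q}$ for $q\in\{q_1,q_2\}$, hence for $q=p$; (iii) invoke Theorem~\ref{thm:base} on the interval $(q_1,q_2)$ to conclude that $\Pi_B$ has a bounded $H^\infty$ functional calculus in $L^p$.

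For step (i), I would appeal to Section~6 of the paper, which (per the introduction) establishes exactly that the Hodge decomposition is stable under small $L^\infty$-perturbations of the coefficients; this is where the hypotheses that $\Pi_B$ and $\underline{\Pi}_B$ are Hodge-Dirac operators with variable coefficients (so the coercivity conditions \eqref{coercond} persist, with uniform implicit constants for $\Norm{B_i-A_i}{\infty}<\delta$) get used. For step (ii), I would use the fact, recalled after the definition of $R$-bisectoriality, that $R$-bisectoriality of $\Pi_B$ is equivalent to the $R$-boundedness of $\{(I+it\Pi_B)^{-1}:t\in\R\}$, together with the resolvent identity
\begin{equation*}
  (I+it\Pi_B)^{-1}-(I+it\Pi_A)^{-1}=(I+it\Pi_B)^{-1}\,it(\Pi_A-\Pi_B)\,(I+it\Pi_A)^{-1}.
\end{equation*}
Writing $\Pi_A-\Pi_B=(A_1-B_1)\rGamma B_2+A_1\rGamma(B_2-A_2)$ and noting that $it\rGamma(I+it\Pi_A)^{-1}$ is $R$-bounded (being a bounded holomorphic function of $\Pi_A$ composed with the $R$-bisectoriality of $\Pi_A$, or directly from the $L^q$ resolvent bounds for $\Pi_A$), one gets that the difference of resolvent families has $R$-bound $\lesssim \Norm{B_1-A_1}{\infty}+\Norm{B_2-A_2}{\infty}$ times an $R$-bound depending only on $\Pi_A$. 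A Neumann-type / perturbation argument for $R$-bounded families (as in \cite{kkw}) then yields $R$-bisectoriality of $\Pi_B$ for $\delta$ small, uniformly at the two endpoints $q_1,q_2$. One must be a little careful that $\delta$ may depend on $q_1,q_2$ and hence on $p$, which is exactly what the statement allows ($\delta=\delta_p$).

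The main obstacle I expect is step (ii): verifying that the perturbation of the resolvent family is genuinely $R$-bounded with the small constant, rather than merely norm-bounded. This requires that $it\rGamma_B(I+it\Pi_A)^{-1}$-type operators — equivalently $\rGamma B_2$ applied to the resolvent of $\Pi_A$ — form an $R$-bounded family; this uses that $\Pi_A$ is $R$-bisectorial together with the Hodge decomposition at $q_1$ and $q_2$ to control $\Gamma$ and $\rGamma_A$ separately on the appropriate summands (so that $\rGamma B_2$ can be rewritten using the isomorphism $B_1:\overline{\range(\rGamma)}\to\overline{\range(\rGamma_A)}$ and the boundedness of $A_1^{-1}$ on that range). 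One then needs the perturbation lemma for $R$-sectoriality/bisectoriality in the form that keeps the angle: if $\Pi_A$ is $R$-bisectorial with angle $\mu$ and the perturbation term is $R$-bounded by $\varepsilon$ uniformly in $t$, then $\Pi_B$ is $R$-bisectorial with angle arbitrarily close to $\mu$ (or with angle $\mu$ if one is content with an angle statement only up to an extra $\varepsilon$; since Theorem~\ref{thm:base} equates the $H^\infty$-angle with the $R$-bisectoriality angle, this is why the conclusion is simply "bounded $H^\infty$ functional calculus" without a sharp angle). Once $R$-bisectoriality and the Hodge decomposition are in hand at $q_1$ and $q_2$, Remark~\ref{rem:interpolation} transfers both to $p$, and Theorem~\ref{thm:base} finishes the proof.
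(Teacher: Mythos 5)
Your overall architecture---perturb the Hodge decomposition, perturb $R$-bisectoriality, interpolate to a neighbourhood of $p$, and invoke the characterization theorem---is exactly the skeleton of the paper's proof of this corollary, and steps (i) and (iii) match (step (i) is Proposition~\ref{prop:Hodgepert}). The genuine gap is in step (ii). Your resolvent-identity approach requires the family $\{it(\Pi_B-\Pi_A)(I+it\Pi_A)^{-1} : t\in\R\}$ to be $R$-bounded with small $R$-bound, and you correctly identify this as the main obstacle. But the remedy you sketch does not close it: writing $\Pi_B-\Pi_A = (B_1-A_1)\rGamma A_2 + B_1\rGamma(B_2-A_2)$, your Hodge/isomorphism argument handles the first summand (where the small factor $B_1-A_1$ sits \emph{outside} $\rGamma$ and one can use $\rGamma A_2(I+it\Pi_A)^{-1}=A_1^{-1}\proj_{\rGammaSmall_A}\Pi_A(I+it\Pi_A)^{-1}$), but in the second summand the small multiplication operator $B_2-A_2$ is trapped \emph{between} the unbounded $\rGamma$ and the resolvent of $\Pi_A$, and neither $R$-bisectoriality of $\Pi_A$ nor the isomorphism $A_1\colon\overline{\range(\rGamma)}\to\overline{\range(\rGamma_A)}$ gives control of $it\rGamma (B_2-A_2)(I+it\Pi_A)^{-1}$.

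The paper closes exactly this gap by first constructing, via the dual space (see the proof of Proposition~\ref{prop:Hodgepert}), small-norm operators $T_1, T_2$ with $B_1\rGamma=(I-T_1)A_1\rGamma$ and $\rGamma B_2=\rGamma A_2(I-T_2)$; the second identity is the point, since it \emph{commutes} the coefficient perturbation past $\rGamma$. This yields the multiplicative factorization
\begin{equation*}
  \Pi_B=(I-T_1\proj_{\rGammaSmall_A})\,\Pi_A\,(I-\proj_{\Gamma}T_2),
\end{equation*}
verified using the nilpotence of $\Gamma,\rGamma_A$ and the complementarity of the Hodge projections. From this,
\begin{equation*}
  I+it\Pi_B=(I-T_1\proj_{\rGammaSmall_A})(I+it\Pi_A)(I-\proj_{\Gamma}T_2)+(T_1\proj_{\rGammaSmall_A}+\proj_{\Gamma}T_2),
\end{equation*}
and $(I+it\Pi_B)^{-1}$ expands as a Neumann series whose only $t$-dependent factors are powers of $(I+it\Pi_A)^{-1}$ (R-bounded by hypothesis), with the small bounded operator $T_1\proj_{\rGammaSmall_A}+\proj_{\Gamma}T_2$ providing convergence. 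So the additive resolvent perturbation you propose must be replaced by this multiplicative factorization of $\Pi_B$ itself; without the $T_2$ construction, the $\rGamma(B_2-A_2)$ term has no $R$-bounded handle.
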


\begin{remark}
The results in this paper concerning Hodge-Dirac operators with variable coefficients can be extended to the Banach space valued setting, provided the target space   has the so-called UMD and RMF properties, and also its dual has RMF. The UMD property, which passes to the dual automatically, is a well-known notion in the theory of Banach spaces, cf.~\cite{burk}. We introduced the RMF property   in \cite{hmp} in relation with our   \emph{Rademacher maximal function}.   It holds in (commutative or not) $L^p$ spaces for $1<p<\infty$ and in spaces with type 2, and fails in $L^1$. We do not know whether it holds in every UMD space. This paper, especially in Section \ref{sect:proofs}, uses extensively the techniques from \cite{hmp}. We choose not to formulate the results in a Banach space valued setting to make the paper more readable, but all proofs are naturally suited to this more general context. 
\end{remark}

\section{Properties of the symbols}
\label{sec:symbols}
In this section we consider the symbols of the Fourier multipliers defined in Subsections \ref{subsec:general} and \ref{subsec:unpert}.
As a consequence of the assumptions made in these subsections, we obtain the various estimates which we need in the next sections to establish an $L^p$ theory. \\

In what follows, we denote $A(a,b) = \{z \in \C \;;\; a \leq |z| \leq b\}$.

\begin{lemma}
\label{lem:spect}
Let $D$ be a k-th order homogeneous differential operator with constant matrix coefficients, satisfying \eqref{D1} and \eqref{D2}.
Then, denoting $M= \underset{|\xi|=1}{\sup}|\hat{D}(\xi)|$, we have that
\begin{itemize}
\item[(a)]
$
\sigma(\hat{D}(\xi)) \subset (S_{\omega} \cap A(\kappa|\xi|^{k},M|\xi|^{k})) \cup \{0\}, 
$ 
\item[(b)]
$\C^{N}=\kernel(\hat{D}(\xi))\oplus \ran(\hat{D}(\xi))$,
\item[(c)]
$
\forall \mu \in (\omega,\frac{\pi}{2}) \quad |(\zeta I - \hat{D}(\xi))^{-1}| \lesssim |\zeta|^{-1} \quad \forall \xi \in \R^{n}\quad \forall  \zeta  \in
\C\setminus(S_{\mu}\cap A(\tfrac12\kappa|\xi|^{k},2M|\xi|^{k}))$.  
\end{itemize}
\end{lemma}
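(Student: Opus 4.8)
\textbf{Proof plan for Lemma~\ref{lem:spect}.}

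The plan is to exploit the homogeneity of $\hat D$ to reduce everything to the unit sphere $|\xi|=1$, which is compact, and then use the two hypotheses \eqref{D1} and \eqref{D2} together with elementary linear algebra. First I would record the scaling identity $\hat D(\xi) = |\xi|^k \hat D(\xi/|\xi|)$ for $\xi\neq 0$, so that $\sigma(\hat D(\xi)) = |\xi|^k \sigma(\hat D(\xi/|\xi|))$ and likewise $\range(\hat D(\xi)) = \range(\hat D(\xi/|\xi|))$, $\kernel(\hat D(\xi)) = \kernel(\hat D(\xi/|\xi|))$. Thus it suffices to prove (a), (b), (c) for $|\xi|=1$ with uniform constants, and the case $\xi=0$ is trivial since $\hat D(0)=0$. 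For (a): \eqref{D2} already gives $\sigma(\hat D(\xi))\subseteq S_\omega$. If $\zeta\in\sigma(\hat D(\xi))$ with $\zeta\neq 0$, pick an eigenvector $e$; since $\zeta\neq 0$, $e = \zeta^{-1}\hat D(\xi)e \in \range(\hat D(\xi))$, so by \eqref{D1} (with $|\xi|=1$) we get $\kappa|e|\leq |\hat D(\xi)e| = |\zeta||e|$, hence $|\zeta|\geq\kappa$; and trivially $|\zeta| = |\hat D(\xi)e|/|e| \leq M$. Restoring homogeneity yields $\sigma(\hat D(\xi))\setminus\{0\}\subset S_\omega\cap A(\kappa|\xi|^k, M|\xi|^k)$.

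For (b), the point is that $0$ is at worst a semisimple eigenvalue, i.e.\ the generalized eigenspace for $0$ equals $\kernel(\hat D(\xi))$. I would argue that $\kernel(\hat D(\xi))\cap\range(\hat D(\xi)) = \{0\}$: if $e$ lies in this intersection then $e\in\range(\hat D(\xi))$ so \eqref{D1} gives $\kappa|\xi|^k|e|\leq|\hat D(\xi)e| = 0$, forcing $e=0$. Combined with the rank--nullity identity $\dim\kernel(\hat D(\xi)) + \dim\range(\hat D(\xi)) = N$, this gives the direct sum decomposition $\C^N = \kernel(\hat D(\xi))\oplus\range(\hat D(\xi))$. (Equivalently, this says the Jordan blocks at $0$ are trivial, which is what makes the resolvent bound near $0$ possible.)

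For (c), fix $\mu\in(\omega,\pi/2)$ and $|\xi|=1$; I want $|(\zeta I - \hat D(\xi))^{-1}|\lesssim|\zeta|^{-1}$ uniformly in $\xi$ for $\zeta\notin S_\mu\cap A(\tfrac12\kappa,2M)$. Using (b), write $\hat D(\xi)$ in block form with respect to $\kernel\oplus\range$: on the kernel it is $0$, and on the range it restricts to an invertible operator $\hat D_R(\xi)$ whose spectrum lies in $S_\omega\cap A(\kappa,M)$ by (a). On the kernel part, $(\zeta I - 0)^{-1} = \zeta^{-1}I$, which obeys the bound for \emph{all} $\zeta\neq 0$. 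On the range part, I need $|(\zeta I - \hat D_R(\xi))^{-1}|\lesssim|\zeta|^{-1}$ for $\zeta$ outside $S_\mu\cap A(\tfrac12\kappa,2M)$; this splits into the region $|\zeta|\geq 2M$ or $|\zeta|\leq\tfrac12\kappa$ (where $\dist(\zeta,\sigma(\hat D_R(\xi)))\gtrsim|\zeta|$ directly, since $\sigma(\hat D_R)\subset A(\kappa,M)$, and one uses the standard bound $|(\zeta - T)^{-1}|\leq$ a function of $\dist(\zeta,\sigma(T))$ that is uniform over the compact family of operators $\hat D_R(\xi)$, $|\xi|=1$) and the region $\tfrac12\kappa\leq|\zeta|\leq 2M$ with $\zeta\notin S_\mu$ (where, since $\sigma(\hat D_R)\subseteq S_\omega$ and $\mu>\omega$, the distance $\dist(\zeta,S_\omega)$ is bounded below by a positive constant times $|\zeta|$ on this annulus, again uniformly in $\xi$ by compactness). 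The main obstacle is making the resolvent estimates \emph{uniform in $\xi$}: the cleanest way is to note that $\{\hat D_R(\xi/|\xi|):\xi\neq 0\}$ lies in a compact subset of $\bddlin(\C^N)$ on which $\zeta\mapsto(\zeta I - \cdot)^{-1}$ is continuous, all spectra being contained in the fixed compact set $S_\omega\cap A(\kappa,M)$; alternatively one invokes a quantitative resolvent bound depending only on $N$, $\kappa$, $M$, $\omega$, $\mu$. Finally, combining the two block estimates and rescaling by $|\xi|^k$ gives (c) in full.
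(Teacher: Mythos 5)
Your proof is correct and follows essentially the same approach as the paper: reduce by homogeneity to the compact unit sphere $|\xi|=1$, prove (a) by the eigenvector argument, (b) by showing $\kernel(\hat D(\xi))\cap\range(\hat D(\xi))=\{0\}$ via \eqref{D1}, and (c) by combining the trivial bound on the kernel block with a continuity/compactness argument on the range block. The paper phrases (b) as ``$\kernel(T^2)=\kernel(T)$ plus Jordan form'' and (c) as ``$\zeta\mapsto\zeta(\zeta I-T)^{-1}$ is continuous on the closed exterior of the cut-out region and bounded at infinity,'' but these are the same observations in slightly different dress.
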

Using   compactness for $|\xi|=1$ and homogeneity for $|\xi|\neq1$,   this is a consequence of the following lemma.

\begin{lemma}
Let $T \in \mathscr{L}(\C^{N})$,   $\kappa>0$, and $\omega\in[0,\frac\pi2)$, and suppose that   
\begin{itemize}
\item[(i)] \(\kappa\abs{e}\leq\abs{Te}\) for  all \(e\in\range(T)\), and 
\item[(ii)] $
\sigma(T) \subset S_{\omega}.
$
\end{itemize}  
Then  we have that
\begin{itemize}
\item[(a)]
$
\sigma(T) \subset (S_{\omega} \cap A(\kappa, |T|)) \cup \{0\}, 
$  
\item[(b)]
$\C^{N}=\kernel(T)\oplus \ran(T)$,
\item[(c)]$
\forall \mu \in (\omega,\frac{\pi}{2}) \quad |(\zeta I - T)^{-1}| \lesssim |\zeta|^{-1} \quad \forall \zeta  \in \C\setminus(S_{\mu}\cap A(\tfrac12\kappa,2|T|))$.  
\end{itemize}
\end{lemma}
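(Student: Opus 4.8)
The plan is to analyze the finite-dimensional linear map $T$ by splitting $\C^N$ into the generalized eigenspace at $0$ and its complement. First I would note that (ii) forces every nonzero eigenvalue of $T$ to lie in $S_\omega$; combined with the fact that all eigenvalues have modulus at most the spectral radius $\leq |T|$, this already gives the upper bound $|z|\le|T|$ for nonzero $z\in\sigma(T)$. For the lower bound $|z|\ge\kappa$, I would argue that any nonzero eigenvalue $z$ with eigenvector $e$ satisfies $e=z^{-1}Te\in\range(T)$, so $e\in\range(T)$ and hence by (i), $|z||e|=|Te|\ge\kappa|e|$, giving $|z|\ge\kappa$. This proves (a), once we also observe that $\sigma(T)\setminus\{0\}$ is exactly the set of nonzero eigenvalues.

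For (b), the key observation is that $0$ is either not an eigenvalue (in which case $T$ is invertible and $\range(T)=\C^N$, $\kernel(T)=\{0\}$, and the decomposition is trivial) or it is an eigenvalue, and I must show the generalized eigenspace $\kernel(T^N)$ coincides with $\kernel(T)$, i.e.\ that the Jordan blocks at $0$ are trivial. Suppose not: then there is $e$ with $Te\ne 0$ but $T^2e=0$. Then $Te\in\range(T)$ and $T(Te)=0$, so applying (i) to $Te\in\range(T)$ gives $0=|T(Te)|\ge\kappa|Te|>0$, a contradiction. Hence the algebraic and geometric multiplicities of the eigenvalue $0$ agree, so the primary decomposition of $\C^N$ with respect to $T$ reads $\C^N=\kernel(T)\oplus V$ where $V$ is the sum of generalized eigenspaces for nonzero eigenvalues; and since the geometric and algebraic multiplicities at $0$ coincide, $V=\range(T)$. (Alternatively: $\kernel(T)\cap\range(T)=0$ by the same one-line argument applied to any $e\in\kernel(T)\cap\range(T)$, and then a dimension count via rank-nullity closes it.)

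For (c), I would use the decomposition from (b): write $T=0\oplus T_1$ on $\kernel(T)\oplus\range(T)$, where $T_1:=T|_{\range(T)}$ is invertible with $\sigma(T_1)=\sigma(T)\setminus\{0\}\subset S_\omega\cap A(\kappa,|T|)$ by (a). Then $(\zeta I-T)^{-1}=\zeta^{-1}I\oplus(\zeta I-T_1)^{-1}$, so it suffices to bound $|(\zeta I-T_1)^{-1}|$ by $|\zeta|^{-1}$ uniformly for $\zeta\notin S_\mu\cap A(\tfrac12\kappa,2|T|)$. Since $\sigma(T_1)$ is a compact subset of the open set $S_\mu^{\circ}\cap A(\tfrac12\kappa,2|T|)^{\circ}$ (strictly, once $\mu>\omega$ and using $\kappa\le|z|\le|T|$ on $\sigma(T_1)$), the resolvent $(\zeta I-T_1)^{-1}$ is holomorphic and bounded on the complement, which includes a neighborhood of $\infty$; the function $\zeta\mapsto|\zeta|\,|(\zeta I-T_1)^{-1}|$ is continuous on the closed set $\C\setminus(S_\mu\cap A(\tfrac12\kappa,2|T|))$, tends to $1$ as $|\zeta|\to\infty$, and tends to $0$ as $\zeta\to 0$ (since $0$ is in the interior of the excluded region once $\kappa>0$, actually we simply note $0$ is bounded away from that complement), hence is bounded there. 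This gives the estimate with an implicit constant depending only on $N$, $\kappa$, $\omega$, $\mu$, $|T|$.

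The main obstacle is (b): the heart of the matter is ruling out nontrivial Jordan structure at the eigenvalue $0$, which is precisely where the range-restricted ellipticity hypothesis (i) does real work (and is the reason one cannot simply cite standard sectoriality arguments). Everything else is a routine compactness/continuity argument in finite dimensions. Note that the passage from this lemma to Lemma~\ref{lem:spect} is exactly the indicated scaling: apply the present lemma to $T=\hat D(\xi)/|\xi|^k$ for $|\xi|\ne 0$, using homogeneity $\hat D(\xi)=|\xi|^k\hat D(\xi/|\xi|)$ and the uniform bound $|\hat D(\xi/|\xi|)|\le M$, and compactness of the unit sphere to get uniformity of the implicit constant in (c).
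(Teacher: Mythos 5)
Your proposal is correct and follows essentially the same route as the paper: prove (a) by noting that an eigenvector for a nonzero eigenvalue lies in $\range(T)$, prove (b) by showing $\kernel(T^2)=\kernel(T)$ (hence no nontrivial Jordan block at $0$) and invoking the Jordan form, and prove (c) by splitting along $\kernel(T)\oplus\range(T)$ and using continuity plus boundedness at infinity of $\zeta\mapsto\zeta(\zeta I-T)^{-1}$. One parenthetical remark in your (c) is confused — $0$ is \emph{not} in the excluded region $S_\mu\cap A(\tfrac12\kappa,2|T|)$, it lies in the interior of its complement — but the surrounding argument (continuous extension to $\zeta=0$ with value $0$, limit $1$ at infinity, hence bounded) is correct and matches the paper.
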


\begin{proof}
Let us first remark that, for a non zero eigenvalue $\lambda$ with eigenvector $e$, we have that
$|\lambda||e| = |Te| \geq \kappa |e|$. This gives (a).
Moreover, (i) also gives that $\kernel(T^{2})=\kernel(T)$. Thus, writing $T$ in Jordan canonical form, we have
the splitting $\C^{N}=\kernel(T)\oplus \ran(T)$. 
The resolvent bounds hold on $\kernel(T)$. On $\ran(T)$, the function   $\zeta \mapsto \zeta(\zeta I - T)^{-1}$ is continuous from the closure of
$\C\setminus(S_{\mu}\cap A(\tfrac12\kappa,2|T|))$ to $\mathscr{L}(\ran(T),\C^{N})$ and is bounded at $\infty$, and thus is bounded on $\C\setminus(S_{\mu}\cap A(\tfrac12\kappa,2|T|))$.  
\end{proof}

Assuming \eqref{D1} and \eqref{D2}, we thus have that   for all $\theta \in (0,\frac{\pi}{2}-\omega)$   and
for all $\xi \in \R^{n}$,
\begin{equation*}
 \exists C>0 \quad \forall \tau \in   S_{\theta}   \quad
  |(I+i\tau\hat{D}(\xi))^{-1}|_{\bddlin(\C^N)} \leq C. 
\end{equation*}
For   $\tau \in S_{\theta}$,   we use the following notation:  
\begin{equation*}\begin{split}
  \hat{R}_\tau(\xi) &:=(I+i\tau\hat{D}(\xi))^{-1},\\
  \hat{P}_\tau(\xi) &:=  \frac12(\hat{R}_\tau(\xi)+\hat{R}_{-\tau}(\xi))= (I+\tau^2\hat{D}(\xi)^2)^{-1},\\
  \hat{Q}_\tau(\xi) &:=  \frac{i}{2}(\hat{R}_{\tau}(\xi)-\hat{R}_{-\tau}(\xi))= \tau\hat{D}(\xi) \hat{P}_\tau(\xi).
\end{split}
\end{equation*}  

  If $\lambda\notin\sigma(\hat{D}(\xi))$ for some $\xi\in\R^n$, then also $\lambda\notin\sigma(\hat{D}(\xi'))$ for all $\xi'$ in some neighbourhood of $\xi$. One checks directly from the definition of the derivative that
\begin{equation*}
  \partial_{\xi_j}(\lambda-\hat{D}(\xi))^{-1}
  =(\lambda-\hat{D}(\xi))^{-1}(\partial_{\xi_j}\hat{D})(\xi)(\lambda-\hat{D}(\xi))^{-1}.
\end{equation*}
By induction it follows that $(\lambda-\hat{D}(\xi))^{-1}$ is actually $C^{\infty}$ in a neighbourhood of $\xi$ for $\lambda\notin\sigma(\hat{D}(\xi))$. In particular, for $\tau\in S_{\theta}$, the function $\hat{R}_\tau(\xi)$ is $C^{\infty}$ in $\xi\in\R^n$, and
\begin{equation}\label{eq:djR}
  \partial_{\xi_j}\hat{R}_\tau(\xi)=\hat{R}_\tau(\xi)(-i\tau\partial_{\xi_j}\hat{D}(\xi))\hat{R}_\tau(\xi).
\end{equation}

\begin{proposition}\label{lem:projEst}
Given the splitting \(\C^N=\kernel(\hat{D}(\xi))\oplus\range(\hat{D}(\xi))\), the complementary projections \(\proj_{\kernel(\hat{D}(\xi))}\) and \(\proj_{\range(\hat{D}(\xi))}=I-\proj_{\kernel(\hat{D}(\xi))}\) are infinitely differentiable in $\R^n\setminus\{0\}$ and satisfy the Mikhlin multiplier conditions
\begin{equation*}
  \abs{\partial_{\xi} ^{\alpha}\proj_{\kernel(\hat{D}(\xi))}}\lesssim_{\alpha}\abs{\xi}^{-\abs{\alpha}},\qquad
  \forall\,\alpha\in\N^n.
\end{equation*}
\end{proposition}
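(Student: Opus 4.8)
The plan is to represent the two complementary projections as contour integrals of the resolvent $(\lambda - \hat{D}(\xi))^{-1}$, using the spectral gap provided by Lemma~\ref{lem:spect}, and then differentiate under the integral sign, controlling the derivatives with the resolvent identity~\eqref{eq:djR}-type formula already recorded above.

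First I would fix $\xi\in\R^n\setminus\{0\}$. By Lemma~\ref{lem:spect}(a), $\sigma(\hat{D}(\xi))\subseteq(S_\omega\cap A(\kappa|\xi|^k,M|\xi|^k))\cup\{0\}$, so the nonzero spectrum and the point $0$ are separated. Choose a small circle $\gamma_\xi$ centred at the origin, say of radius $\tfrac12\kappa|\xi|^k$, oriented counterclockwise, enclosing $0$ but no other point of $\sigma(\hat{D}(\xi))$. Then, by the Riesz functional calculus for the finite matrix $\hat{D}(\xi)$, and since by Lemma~\ref{lem:spect}(b) the $0$-eigenspace is exactly $\kernel(\hat D(\xi))$ with $\ran(\hat D(\xi))$ as complement,
\begin{equation*}
  \proj_{\kernel(\hat{D}(\xi))}=\frac{1}{2\pi i}\int_{\gamma_\xi}(\lambda-\hat{D}(\xi))^{-1}\ud\lambda,
\end{equation*}
and $\proj_{\range(\hat{D}(\xi))}=I-\proj_{\kernel(\hat{D}(\xi))}$. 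To get a formula with $\xi$-independent contour, I would substitute $\lambda=|\xi|^k\mu$, turning the integral into
\begin{equation*}
  \proj_{\kernel(\hat{D}(\xi))}=\frac{1}{2\pi i}\int_{|\mu|=\kappa/2}\big(\mu I-|\xi|^{-k}\hat{D}(\xi)\big)^{-1}\ud\mu,
\end{equation*}
where $|\xi|^{-k}\hat{D}(\xi)=\hat{D}(\xi/|\xi|)$ depends only on the direction $\xi/|\xi|$, and the circle $|\mu|=\kappa/2$ is now fixed and, by Lemma~\ref{lem:spect}(c) (applied on the unit sphere, or directly via the second lemma with $T=\hat D(\xi/|\xi|)$), lies at a uniform distance from $\sigma(\hat D(\xi/|\xi|))$, giving a uniform bound $|(\mu I-\hat D(\xi/|\xi|))^{-1}|\lesssim 1$ on this contour.

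Next I would establish smoothness and the Mikhlin bounds by differentiating under the integral. Since $\mu\notin\sigma(\hat D(\eta))$ uniformly for $\eta$ on the unit sphere and $|\mu|=\kappa/2$, the resolvent $(\mu I-\hat D(\xi/|\xi|))^{-1}$ is, as recorded in the paragraph preceding Proposition~\ref{lem:projEst}, $C^\infty$ in $\xi\in\R^n\setminus\{0\}$, with
\begin{equation*}
  \partial_{\xi_j}\big(\mu I-\hat{D}(\xi/|\xi|)\big)^{-1}
  =\big(\mu I-\hat{D}(\xi/|\xi|)\big)^{-1}\big(\partial_{\xi_j}\hat{D}(\xi/|\xi|)\big)\big(\mu I-\hat{D}(\xi/|\xi|)\big)^{-1},
\end{equation*}
and similar formulas for higher derivatives obtained by the Leibniz rule, each term being a finite product of resolvents (uniformly bounded on the contour) and derivatives $\partial_\xi^\beta\hat D(\xi/|\xi|)$. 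The map $\xi\mapsto\hat D(\xi/|\xi|)$ is $C^\infty$ and homogeneous of degree $0$ on $\R^n\setminus\{0\}$, so $|\partial_\xi^\beta \hat D(\xi/|\xi|)|\lesssim_\beta |\xi|^{-|\beta|}$; combining with the uniform resolvent bounds and integrating over the fixed finite-length contour yields $|\partial_\xi^\alpha \proj_{\kernel(\hat D(\xi))}|\lesssim_\alpha |\xi|^{-|\alpha|}$, and the same for the complementary projection since $I$ has vanishing derivatives. The interchange of $\partial_\xi$ and $\int$ is justified by the local uniform bounds and dominated convergence.

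I do not expect a serious obstacle here; the only point requiring a little care is the choice of a contour that is simultaneously $\xi$-independent (after rescaling) and bounded away from the spectrum uniformly in $\xi$. This is exactly what Lemma~\ref{lem:spect} (or its companion lemma on the unit sphere, via compactness and homogeneity) provides: the nonzero spectrum of $\hat D(\xi/|\xi|)$ stays in $A(\kappa,M)$, so the circle of radius $\kappa/2$ never meets it. Everything else is a routine Leibniz-rule bookkeeping of resolvent products.
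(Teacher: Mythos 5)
Your proposal is correct and follows essentially the same route as the paper: both represent the kernel projection as a Riesz contour integral of the resolvent using the spectral gap from Lemma~\ref{lem:spect}, establish smoothness by differentiating under the integral, and exploit the degree-$k$ homogeneity of $\hat{D}$ to obtain the Mikhlin bounds. The paper applies the ``smooth $+$ zero-homogeneous $\Rightarrow$ Mikhlin'' observation directly to the projection after proving smoothness in an annular neighbourhood of the unit sphere, whereas you rescale the contour and apply the same homogeneity observation to $\hat{D}(\xi/|\xi|)$ before Leibniz bookkeeping; these are equivalent, with the paper's version slightly more economical.
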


\begin{proof}
The projections \(\proj_{\kernel(\hat{D}(\xi))}\) are obtained by the Dunford--Riesz functional calculus by integrating the resolvent around a contour, which circumscribes the origin and no other point of the spectrum of $\hat{D}(\xi)$. By Lemma~\ref{lem:spect}, for $\xi$ in a neighbourhood of the unit sphere (say $\frac34<\abs{\xi}<\frac43$), we may choose
\begin{equation*}
  \proj_{\kernel(\hat{D}(\xi))}
  =\frac{1}{2\pi i}\int_{\partial\disc(0,2^{-k-1}\kappa)}(\lambda-\hat{D}(\xi))^{-1}\ud\lambda.
\end{equation*}
Using the smoothness of $(\lambda-\hat{D}(\xi))^{-1}$ discussed before the statement of the lemma, differentiation of arbitrary order in $\xi$ under the integral sign may be routinely justified. This shows that $\proj_{\kernel(\hat{D}(\xi))}$ is $C^{\infty}$ in a neighbourhood of the unit sphere.

To complete the proof, it suffices to observe that $\hat{D}(t\xi)=t^k\hat{D}(\xi)$ for $t\in(0,\infty)$. Hence $\kernel(\hat{D}(\xi))$ and $\range(\hat{D}(\xi))$, and therefore the associated projections, are invariant under the scalings $\xi\mapsto t\xi$. It is a general fact that smooth functions, which are homogeneous of order zero, satisfy the Mikhlin multiplier conditions. Indeed, for any $\xi\in\R^n\setminus\{0\}$ and $t\in(0,\infty)$, we have
\begin{equation*}
  \partial_\xi^{\alpha} \proj_{\kernel(\hat{D}(\xi))}
  =\partial_\xi^{\alpha}( \proj_{\kernel(\hat{D}(t\xi))})
  =t^{\abs{\alpha}}(\partial^{\alpha} \proj_{\kernel(\hat{D}(\,\cdot\,))})(t\xi),
\end{equation*}
and setting $t=\abs{\xi}^{-1}$ and using the boundedness of the continuous function $\partial_\xi^{\alpha} \proj_{\kernel(\hat{D}(\xi))}$ on the unit sphere, the Mikhlin estimate follows.
\end{proof}

Notice that, by \eqref{D1}, $\hat{D}(\xi)$ is an isomorphism of $\ran(\hat{D}(\xi))$ onto itself   for each $\xi\in\R^n\setminus\{0\}$.  
We denote by $\hat{D}_{R}^{-1}(\xi)$ its inverse.
 
\begin{lemma}\label{lem:DRest}
The function $\hat{D}_{R} ^{-1}(\xi)\proj_{\range(D(\xi))}$ is smooth in $\R^n\setminus\{0\}$ and satisfies the Mikhlin-type multiplier condition
\begin{equation*}
  \partial_{\xi}^{\alpha}\big(\hat{D}_{R}^{-1}(\xi)\proj_{\range(\hat{D}(\xi))}\big)
  \lesssim_{\alpha} \abs{\xi}^{-k-\abs{\alpha}},\qquad
  \forall\,\alpha\in\N^n.
\end{equation*}
\end{lemma}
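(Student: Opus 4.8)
The plan is to reduce the statement to the already-proven Mikhlin estimates for the two projections, combined with a homogeneity/scaling argument. First I would restrict attention to the annular shell $\frac34<\abs{\xi}<\frac43$ and show smoothness there; the extension to all of $\R^n\setminus\{0\}$ together with the sharp $\abs{\xi}^{-k-\abs{\alpha}}$ decay will then follow from the homogeneity $\hat{D}(t\xi)=t^k\hat{D}(\xi)$, exactly as in the proof of Proposition~\ref{lem:projEst}. Indeed, since $\hat{D}(t\xi)=t^k\hat{D}(\xi)$ preserves the splitting $\C^N=\kernel(\hat{D}(\xi))\oplus\range(\hat{D}(\xi))$, its restriction to the range scales by $t^k$, and hence the inverse on the range satisfies $\hat{D}_R^{-1}(t\xi)=t^{-k}\hat{D}_R^{-1}(\xi)$; so $\hat{D}_R^{-1}(\xi)\proj_{\range(\hat{D}(\xi))}$ is homogeneous of degree $-k$, and a smooth function homogeneous of degree $-k$ automatically satisfies the stated Mikhlin-type bound (differentiate the homogeneity identity in $\xi$, set $t=\abs{\xi}^{-1}$, and use boundedness of the derivatives on the unit sphere).

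It therefore remains to prove smoothness on the shell $\frac34<\abs{\xi}<\frac43$. Here I would use the Dunford--Riesz functional calculus representation
\begin{equation*}
  \hat{D}_R^{-1}(\xi)\proj_{\range(\hat{D}(\xi))}
  =\frac{1}{2\pi i}\int_{\gamma} \zeta^{-1}(\zeta I-\hat{D}(\xi))^{-1}\ud\zeta,
\end{equation*}
where $\gamma$ is a contour encircling the nonzero part of $\sigma(\hat{D}(\xi))$ but not the origin; by Lemma~\ref{lem:spect}(a), for $\xi$ in this shell the nonzero spectrum lies in the compact set $S_\omega\cap A(\tfrac34{}^k\kappa, \tfrac43{}^k M)$, which stays bounded away from $0$, so a single contour $\gamma$ works uniformly for all $\xi$ in the shell. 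One checks that $\zeta\mapsto\zeta^{-1}$ is the holomorphic function whose functional calculus gives precisely the inverse on the range together with the zero operator on the kernel, i.e. $\hat{D}_R^{-1}(\xi)\proj_{\range(\hat{D}(\xi))}$. Since $(\zeta I-\hat{D}(\xi))^{-1}$ is $C^\infty$ in $\xi$ jointly with $\zeta$ on $\gamma$ (by the resolvent differentiation formula recalled before Proposition~\ref{lem:projEst}, iterated), differentiation under the integral sign is routinely justified, giving smoothness on the shell.

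The only genuinely delicate point is the verification that $\zeta\mapsto\zeta^{-1}$, inserted into the Dunford--Riesz calculus around $\gamma$, reproduces exactly $\hat{D}_R^{-1}(\xi)\proj_{\range(\hat{D}(\xi))}$ rather than something else; this is where one uses that $\kernel(\hat{D}(\xi))=\kernel(\hat{D}(\xi)^2)$ (established in the previous lemma), so that the spectral projection onto the nonzero eigenvalues is precisely $\proj_{\range(\hat{D}(\xi))}$, and that on the invariant subspace $\range(\hat{D}(\xi))$ the operator $\hat{D}(\xi)$ is invertible with inverse $\hat{D}_R^{-1}(\xi)$, so $\frac{1}{2\pi i}\int_\gamma \zeta^{-1}(\zeta-\hat{D}(\xi))^{-1}\ud\zeta$ restricted there is $\hat{D}(\xi)^{-1}$ by the standard functional calculus identity $g(T)h(T)=(gh)(T)$ applied to $g(\zeta)=\zeta^{-1}$, $h(\zeta)=\zeta$. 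Alternatively, one can sidestep the contour integral altogether: write $\hat{D}_R^{-1}(\xi)\proj_{\range(\hat{D}(\xi))}$ as a product or composition of $\hat{D}(\xi)$ (a polynomial in $\xi$, hence smooth) with the smooth projection $\proj_{\range(\hat{D}(\xi))}$ from Proposition~\ref{lem:projEst}, using that $\hat{D}_R^{-1}(\xi)\proj_{\range}=(\hat{D}(\xi)+\proj_{\kernel(\hat{D}(\xi))})^{-1}\proj_{\range(\hat{D}(\xi))}$ where $\hat{D}(\xi)+\proj_{\kernel(\hat{D}(\xi))}$ is invertible on all of $\C^N$ with uniformly bounded inverse on the shell; smoothness of the inverse of a smooth invertible matrix-valued function then finishes the argument, and the Leibniz rule gives the derivative bounds on the shell. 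I would present this second, more elementary route, and then invoke the scaling argument above to conclude.
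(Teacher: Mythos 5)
Your argument is correct. The first route you sketch (Dunford--Riesz contour integral with symbol $\lambda^{-1}$, differentiation under the integral sign, homogeneity via change of variables) is precisely the paper's proof. The second route, which you prefer, is a genuine and somewhat more elementary variant: instead of representing $m(\xi):=\hat{D}_R^{-1}(\xi)\proj_{\range(\hat{D}(\xi))}$ by a contour integral and differentiating under the integral sign, you observe the identity
\begin{equation*}
  m(\xi)=\bigl(\hat{D}(\xi)+\proj_{\kernel(\hat{D}(\xi))}\bigr)^{-1}\proj_{\range(\hat{D}(\xi))},
\end{equation*}
which reduces smoothness to the already-established smoothness of $\proj_{\kernel(\hat{D}(\xi))}$ from Proposition~\ref{lem:projEst}, the polynomial nature of $\hat{D}$, and the fact that the inverse of a smooth invertible matrix-valued map is smooth. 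The homogeneity $m(t\xi)=t^{-k}m(\xi)$ you then obtain directly from the scaling $\hat{D}(t\xi)=t^k\hat{D}(\xi)$ preserving the kernel/range splitting, whereas the paper extracts it from a change of variables in the contour integral. Both approaches give the same Mikhlin-type bound in the same way (smooth and homogeneous of degree $-k$ implies the estimate). What the paper's route buys is uniformity of method with Proposition~\ref{lem:projEst}, whose proof is already phrased in terms of the Dunford--Riesz calculus; what your route buys is avoiding a second contour-integral argument and sidestepping the (minor but real) verification that $\zeta\mapsto\zeta^{-1}$ around the nonzero spectrum reproduces $\hat{D}_R^{-1}\proj_{\range}$. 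Either is perfectly adequate.
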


\begin{proof}
By the Dunford--Riesz functional calculus, we have
\begin{equation*}
  m(\xi):=\hat{D}_{R} ^{-1}(\xi)\proj_{\range(\hat{D}(\xi))}
  =\frac{1}{2\pi i}\int_{\gamma}(\lambda-\hat{D}(\xi))^{-1}\frac{\ud\lambda}{\lambda},
\end{equation*}
where $\gamma$ is any path oriented counter-clockwise around the non-zero spectrum of $\hat{D}(\xi)$. Since $\hat{D}(r\xi)=r^k\hat{D}(\xi)$, a change of variables and Cauchy's theorem shows that $m(r\xi)=r^{-k}m(\xi)$, and the smoothness of $m$ in $\R^n\setminus\{0\}$ is checked as in the previous proof by differentiating under the integral sign. The modified Mikhlin condition follows from this in a similar way as in the previous proof.
\end{proof}

\begin{proposition}\label{lem:resolvEst}
 Given $\nu \in(0,\frac\pi2-\omega)$, the following Mikhlin conditions hold uniformly in $\tau\in S_{\nu}$:
\begin{equation*}
  \abs{\partial_{\xi} ^{\alpha}\hat{R}_{\tau}(\xi)}\lesssim_{\alpha}\abs{\xi}^{-\abs{\alpha}},\quad
  \forall\,\alpha\in\N^n. 
\end{equation*} 
Similar estimates hold for $\hat{P}_{\tau}(\xi)$ and $\hat{Q}_{\tau}(\xi)$.
\end{proposition}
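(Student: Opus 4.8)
The plan is to derive the Mikhlin bounds for $\hat{R}_{\tau}(\xi)$ by induction on $\abs{\alpha}$, using the differentiation formula~\eqref{eq:djR} as the recursive engine, and then to transfer the estimates to $\hat{P}_{\tau}(\xi)=\frac12(\hat{R}_{\tau}+\hat{R}_{-\tau})$ and $\hat{Q}_{\tau}(\xi)=\frac{i}{2}(\hat{R}_{\tau}-\hat{R}_{-\tau})$ by linearity. The base case $\abs{\alpha}=0$ is precisely the uniform resolvent bound $\abs{\hat{R}_{\tau}(\xi)}\lesssim 1$ for $\tau\in S_{\nu}$, which was recorded just after the second lemma of this section as a consequence of Lemma~\ref{lem:spect}(c) together with compactness on $\abs{\xi}=1$ and homogeneity: indeed $I+i\tau\hat{D}(\xi)=-i\tau(\hat{D}(\xi)-(i\tau)^{-1}I)$, and $(i\tau)^{-1}$ lies outside the relevant sector-annulus region (scaled by $\abs{\xi}^k$) uniformly for $\tau\in S_{\nu}$ and all $\xi\ne0$, while the bound is trivial at $\xi=0$ where $\hat{R}_\tau(0)=I$.

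For the inductive step, I would iterate~\eqref{eq:djR}: a single $\xi$-derivative of $\hat{R}_\tau$ produces $\hat{R}_\tau(-i\tau\partial_{\xi_j}\hat{D})\hat{R}_\tau$, and applying the Leibniz rule repeatedly, $\partial_\xi^\alpha \hat{R}_\tau(\xi)$ becomes a finite sum of terms of the form
\begin{equation*}
  \hat{R}_\tau(\xi)\,(-i\tau\,\partial_\xi^{\beta_1}\hat{D}(\xi))\,\hat{R}_\tau(\xi)\,(-i\tau\,\partial_\xi^{\beta_2}\hat{D}(\xi))\,\hat{R}_\tau(\xi)\cdots\hat{R}_\tau(\xi),
\end{equation*}
with $m\ge1$ factors $\partial_\xi^{\beta_l}\hat{D}$, each $\abs{\beta_l}\ge1$, and $\sum_l\abs{\beta_l}=\abs{\alpha}$ (the $\hat R_\tau$ factors between consecutive derivatives of $\hat D$ are the only place where further $\partial_\xi^{\beta_l}\hat R_\tau$ could appear, and those are again expanded by~\eqref{eq:djR}; a clean way to phrase this is induction on $\abs\alpha$, differentiating the already-obtained expression for $\partial_\xi^\alpha\hat R_\tau$ once more and reapplying~\eqref{eq:djR} to each $\hat R_\tau$ factor). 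Since $\hat{D}$ is a homogeneous polynomial of degree $k$, $\partial_\xi^{\beta_l}\hat{D}(\xi)$ is homogeneous of degree $k-\abs{\beta_l}$, hence $\abs{\partial_\xi^{\beta_l}\hat{D}(\xi)}\lesssim_{\beta_l}\abs{\xi}^{k-\abs{\beta_l}}$. Bounding each $\hat{R}_\tau(\xi)$ by the uniform constant from the base case, and using $\abs{\tau\hat D(\xi)}=\abs{\tau}\abs{\hat D(\xi)}$ controlled via $\abs{\tau\hat{D}(\xi)\hat R_\tau(\xi)}=\abs{\hat Q_\tau(\xi)}\lesssim1$ (equivalently, absorbing one resolvent factor: $\abs{\tau\,\partial_\xi^{\beta_l}\hat D(\xi)\,\hat R_\tau(\xi)}\lesssim\abs{\tau}\abs{\xi}^{k-\abs{\beta_l}}\cdot\abs{\xi}^{-k}\cdot\abs{\xi}^{k}\cdots$—more carefully, one uses that $\tau\hat D(\xi)\hat R_\tau(\xi)$ is bounded and that $\partial_\xi^{\beta_l}\hat D(\xi)=\hat D(\xi)\cdot[\hat D(\xi)^{-1}\partial_\xi^{\beta_l}\hat D(\xi)]$ on $\range(\hat D(\xi))$ with the bracket of size $\abs{\xi}^{-\abs{\beta_l}}$, handling the kernel part separately since $\hat R_\tau=\id$ there), each such term is $\lesssim_\alpha \abs{\xi}^{-\sum_l\abs{\beta_l}}=\abs{\xi}^{-\abs{\alpha}}$, uniformly in $\tau\in S_\nu$. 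Summing over the finitely many terms gives the claim for $\hat R_\tau$, and the estimates for $\hat P_\tau,\hat Q_\tau$ follow immediately: $\hat P_\tau=\frac12(\hat R_\tau+\hat R_{-\tau})$ inherits the bound by the triangle inequality, and $\hat Q_\tau=\frac{i}{2}(\hat R_\tau-\hat R_{-\tau})$ likewise (alternatively $\hat Q_\tau=\tau\hat D\hat P_\tau$, combining Lemma~\ref{lem:DRest}, Proposition~\ref{lem:projEst} and the $\hat P_\tau$ bound via Leibniz).

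The only genuinely delicate point is the bookkeeping of powers of $\abs{\tau}$ and $\abs{\xi}$: naively each factor $\tau\partial_\xi^{\beta_l}\hat D(\xi)$ carries an unbounded $\abs\tau$, so one must pair it with an adjacent resolvent factor to form something uniformly bounded. The clean device is to split $\C^N=\kernel(\hat D(\xi))\oplus\range(\hat D(\xi))$ (Lemma~\ref{lem:spect}(b), with the projections Mikhlin-bounded by Proposition~\ref{lem:projEst}): on the kernel all resolvents act as the identity and the $\partial_\xi^{\beta_l}\hat D$ factors, being polynomials, give no trouble at all—wait, they map into the range, so one tracks that each $\partial_\xi^{\beta_l}\hat D$ sends everything into $\range(\hat D(\xi))$ (up to a Mikhlin-controlled projection error from differentiating the splitting), where $\hat D(\xi)$ is invertible with $\abs{\hat D_R^{-1}(\xi)}\lesssim\abs\xi^{-k}$ (Lemma~\ref{lem:DRest}), so $\tau\partial_\xi^{\beta_l}\hat D(\xi)=\tau\hat D(\xi)\cdot\hat D_R^{-1}(\xi)\partial_\xi^{\beta_l}\hat D(\xi)$ and $\tau\hat D(\xi)\hat R_\tau(\xi)=\hat Q_\tau(\xi)$ is uniformly bounded. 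Carrying this factorization through every term makes all $\abs\tau$'s disappear into bounded quantities $\hat Q_\tau$ and leaves exactly the power $\abs\xi^{-\abs\alpha}$. I expect this factorization/bookkeeping to be the main obstacle; everything else is routine differentiation under an integral sign or triangle inequalities.
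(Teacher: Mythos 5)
Your proposal identifies the right tools---induction starting from \eqref{eq:djR}, the splitting $\C^N = \kernel(\hat{D}(\xi)) \oplus \range(\hat{D}(\xi))$ with its Mikhlin-controlled projections, and the device of using $\hat{D}_R^{-1}$ together with $-i\tau\hat D\hat R_\tau = \hat R_\tau-I$ to absorb the unbounded factor $\tau$---and these are precisely the paper's ingredients. But the execution has a genuine gap on the kernel side, which you half-notice yourself in the ``wait, they map into the range'' sentence. The term $(\partial_\xi^\alpha\hat{R}_\tau)\proj_{\kernel(\hat{D})}$ cannot be handled the way you describe: in the expansion a block like $\hat{R}_\tau(-i\tau\partial_\xi^{\beta}\hat{D})\hat{R}_\tau\proj_{\kernel(\hat{D})}$ simplifies to $\hat{R}_\tau(-i\tau\partial_\xi^{\beta}\hat{D})\proj_{\kernel(\hat{D})}$, and since $\partial_\xi^\beta\hat{D}$ (a matrix polynomial) does \emph{not} map $\kernel(\hat{D}(\xi))$ into $\range(\hat{D}(\xi))$, there is no factor of $\hat D$ available to absorb the stray $\tau$: the factorization $\tau\partial_\xi^\beta\hat D = \tau\hat D\cdot\hat D_R^{-1}\partial_\xi^\beta\hat D$ fails precisely there, and the expression is unbounded as $|\tau|\to\infty$.

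The missing idea is to treat the kernel projection via the Leibniz rule rather than via the resolvent recursion. Write
\begin{equation*}
  (\partial_\xi^\alpha \hat{R}_\tau)\proj_{\kernel(\hat{D})}
  = \partial_\xi^\alpha\bigl(\hat{R}_\tau \proj_{\kernel(\hat{D})}\bigr)
   - \sum_{0\le\beta\lneqq\alpha}\binom{\alpha}{\beta}(\partial_\xi^\beta\hat{R}_\tau)\,
     \partial_\xi^{\alpha-\beta}\proj_{\kernel(\hat{D})},
\end{equation*}
and observe that $\hat{R}_\tau\proj_{\kernel(\hat{D})} = \proj_{\kernel(\hat{D})}$ identically, so the first term is just $\partial_\xi^\alpha\proj_{\kernel(\hat{D})}$, bounded by Proposition~\ref{lem:projEst}; the remaining terms are bounded by the inductive hypothesis for $\hat{R}_\tau$ together with Proposition~\ref{lem:projEst}. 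No $\tau$ ever appears. For the range-projected part your idea is essentially the paper's, but it is cleaner to carry it out as a one-step induction via an iterated Leibniz identity in which each term carries \emph{exactly one} factor $(-i\tau\partial_\xi^\theta\hat D)\hat R_\tau\proj_{\range(\hat D)}$, rewritten as $(\partial_\xi^\theta\hat D)(\hat R_\tau-I)\hat D_R^{-1}\proj_{\range(\hat D)}$ with all three factors bounded, while the lower-order derivative $\partial_\xi^{\alpha-\theta}\hat R_\tau$ on the left is controlled by induction. The full multi-factor expansion you sketch would require this trick at every intermediate slot, where no $\proj_{\range(\hat D)}$ is sitting to hang $\hat D_R^{-1}$ on---which is exactly why you found the bookkeeping to be the main obstacle.
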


\begin{proof}
We use induction to establish the desired bounds. For \(\alpha=0\), this was proven in Lemma \ref{lem:spect}. In order to make the induction step, we will need the identity
\begin{equation}\label{eq:dR}
   \partial_{\xi} ^{\alpha}\hat{R}_{\tau}
  =\sum_{0\lneqq\theta\leq\alpha}\binom{\alpha}{\theta}
  (\partial^{\alpha-\theta}\hat{R}_{\tau})(-i\tau\partial_{\xi}^{\theta}\hat{D})\hat{R}_{\tau},
  \qquad\alpha\gneqq 0.
\end{equation}
We use the multi-index notation as follows: the binomial coefficients are $\displaystyle\binom{\alpha}{\theta}:=\prod_{i=1}^n\binom{\alpha_i}{\theta_i}$, the order relation $\theta\leq\alpha$ means that $\theta_i\leq\alpha_i$ for every $i=1,\ldots,n$, whereas $\theta\lneqq\alpha$ means that $\theta\leq\alpha$ but $\theta\neq\alpha$; finally, it is understood that $\displaystyle\binom{\alpha}{\theta}=0$ if $\theta\not\leq\alpha$.

Let us prove the identity \eqref{eq:dR} by induction. For $\abs{\alpha}=1$, the formula was already established in \eqref{eq:djR}.
Assuming \eqref{eq:dR} for some $\alpha\gneqq 0$, we prove it for $\alpha+e_j$, where $e_j$ is the $j$th standard unit vector. Indeed,
\begin{equation*}
\begin{split}
  &\partial_{\xi}^{\alpha+e_j}\hat{R}_{\tau}
  =\partial_{\xi_j}\partial_{\xi} ^{\alpha}\hat{R}_{\tau} \\
  &=\sum_{0\lneqq\theta\leq\alpha}\binom{\alpha}{\theta}\Big[
  (\partial^{\alpha+e_j-\theta}\hat{R}_{\tau})(-i\tau\partial_{\xi}^{\theta}\hat{D})\hat{R}_{\tau}
  +(\partial^{\alpha-\theta}\hat{R}_{\tau})(-i\tau\partial_{\xi}^{\theta+e_j}\hat{D})\hat{R}_{\tau} \\
  &\phantom{=\sum_{0\neq\theta\leq\alpha}\binom{\alpha}{\theta}\Big[}
   +(\partial^{\alpha-\theta}\hat{R}_{\tau})(-i\tau\partial_{\xi}^{\theta}\hat{D})\hat{R}_{\tau}
   (-i\tau\partial_{\xi_j}\hat{D})\hat{R}_{\tau}\Big] \\
  &=\Big[\sum_{0\lneqq\theta\leq\alpha}\binom{\alpha}{\theta}
    +\sum_{e_j\lneqq\theta\leq\alpha+e_j}\binom{\alpha}{\theta-e_j}\Big]
  (\partial^{\alpha+e_j-\theta}\hat{R}_{\tau})(-i\tau\partial_{\xi}^{\theta}\hat{D})\hat{R}_{\tau} 
   +(\partial^{\alpha}\hat{R}_{\tau})(-i\tau\partial_{\xi}^{e_j}\hat{D})\hat{R}_{\tau} \\
  &=\sum_{0\lneqq\theta\leq\alpha+e_j}\Big[\binom{\alpha}{\theta}+\binom{\alpha}{\theta-e_j}\Big]
   (\partial^{\alpha+e_j-\theta}\hat{R}_{\tau})(-i\tau\partial_{\xi}^{\theta}\hat{D})\hat{R}_{\tau},
\end{split}
\end{equation*}
and the proof of \eqref{eq:dR} is completed by the binomial identity $\binom{\alpha}{\theta}+\binom{\alpha}{\theta-e_j}=\binom{\alpha+e_j}{\theta}$. Notice that the induction hypothesis was used twice: first to expand $\partial_{\xi}^{\alpha}\hat{R}_{\tau}$ in the second step, and then to evaluate the summation over the last of the three terms in the third one.

We then pass to the inductive proof of the assertion of the lemma. Let $\alpha\gneqq 0$, and assume the claim proven for all $\beta\lneqq\alpha$. We first consider $(\partial_{\xi} ^{\alpha}\hat{R}_{\tau})\proj_{\range(\hat{D})}$.
By the induction assumption, we know that the factors $\partial^{\alpha-\theta}_{\xi}\hat{R}_{\tau}$ in \eqref{eq:dR} satisfy
\begin{equation*}
  \abs{\partial^{\alpha-\theta}_{\xi}\hat{R}_{\tau}(\xi)}\lesssim\abs{\xi}^{\abs{\theta}-\abs{\alpha}}.
\end{equation*}
Furthermore,
\begin{equation*}
  (-i\tau\partial_{\xi}^{\theta}\hat{D})\hat{R}_{\tau}\proj_{\range(\hat{D})}
  =(\partial_{\xi}^{\theta}\hat{D})\hat{R}_{\tau}(-i\tau\hat{D})\hat{D}_{R}^{-1}\proj_{\range(\hat{D})}
  =(\partial_{\xi}^{\theta}\hat{D})(\hat{R}_{\tau}-I)\hat{D}_{R}^{-1}\proj_{\range(\hat{D})},
\end{equation*}
where the different factors are bounded by
\begin{equation*}
  \abs{\partial_{\xi}^{\theta}\hat{D}}\lesssim\abs{\xi}^{k-\abs{\theta}},\qquad
  \abs{\hat{R}_{\tau}-I}\lesssim 1,\qquad
  \abs{\hat{D}_{R}^{-1}(\xi)\proj_{\range(\hat{D}(\xi))}}\lesssim\abs{\xi}^{-k}.
\end{equation*}
Multiplying all these estimates, we get $\abs{(\partial_{\xi} ^{\alpha}\hat{R}_{\tau})\proj_{\range(\hat{D})}} \lesssim\abs{\xi}^{-\abs{\alpha}}$, as required.

It remains to estimate \((\partial_{\xi} ^{\alpha}\hat{R}_{\tau})\proj_{\kernel(\hat{D})}\). We have
\begin{equation*}\begin{split}
 (\partial_{\xi} ^{\alpha}\hat{R}_{\tau})\proj_{\kernel(\hat{D})}
  &=\partial_{\xi} ^{\alpha}(\hat{R}_{\tau}\proj_{\kernel(\hat{D})})
   -\sum_{0\leq\beta\lneqq\alpha}(\partial_{\xi }^{\beta}\hat{R}_{\tau})\,
     \partial^{\alpha-\beta}\proj_{\kernel(\hat{D})} \\
  &=\partial_{\xi} ^{\alpha}\proj_{\kernel(\hat{D})}
   -\sum_{0\leq\beta\lneqq\alpha}O(\abs{\xi}^{-\abs{\beta}})O(\abs{\xi}^{-\abs{\alpha-\beta}})
  =O(\abs{\xi}^{-\abs{\alpha}}),
\end{split}\end{equation*}
where the \(O\)-bounds follow from the induction assumption and the Mikhlin bounds for $\proj_{\kernel(\hat{D}(\xi))}$ established in Proposition~\ref{lem:projEst}. The proof is complete.
\end{proof}

\begin{proposition}\label{prop:fcPi}  
Let $\mu \in (\omega, \frac{\pi}{2})$,   and let   \(f\in H_0^{\infty}(S_{\mu})\).   Then \(f(D)\) is a Mikhlin multiplier, and more precisely its symbol \(\widehat{f(D)}(\xi)=f(\hat{D}(\xi))\) satisfies, for $\xi \neq 0$,
  \begin{equation*}
  \abs{\partial_{\xi} ^{\alpha}f(\hat{D}(\xi))}\lesssim_{\mu}
  \abs{\xi}^{-\abs{\alpha}}  
  \sup\big\{\abs{f(\lambda)}: \lambda\in S_{\mu},\,\tfrac12\kappa|\xi|^{k}\leq\abs{\lambda}\leq 2M|\xi|^{k}\big\},
\end{equation*}
where $M=\underset{|\xi| =1}{\sup} |\hat{D}(\xi)|$  .
\end{proposition}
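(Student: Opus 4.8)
The strategy is to represent $f(\hat D(\xi))$ by the Dunford--Riesz integral and to differentiate under the integral sign, using the resolvent estimates from Lemma~\ref{lem:spect} together with the smoothness of the resolvent established just before Proposition~\ref{lem:projEst}. Concretely, for fixed $\xi\neq 0$, since $f\in H_0^{\infty}(S_{\mu})$ decays at $0$ and $\infty$ and $\sigma(\hat D(\xi))\subset(S_{\omega}\cap A(\kappa|\xi|^k,M|\xi|^k))\cup\{0\}$ by Lemma~\ref{lem:spect}(a), one has
\begin{equation*}
  f(\hat D(\xi))=\frac{1}{2\pi i}\int_{\gamma_{\xi}}f(\lambda)(\lambda-\hat D(\xi))^{-1}\ud\lambda,
\end{equation*}
where $\gamma_{\xi}$ is a contour inside $S_{\mu}$ surrounding the non-zero part of $\sigma(\hat D(\xi))$ but avoiding $0$ (here one uses that $f(0)=0$, so the spectral point $0$ contributes nothing and need not be enclosed). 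The natural choice is to scale the contour with $|\xi|$: take $\gamma_{\xi}=|\xi|^k\gamma_1$ where $\gamma_1$ is a fixed contour in $S_{\mu}$ around $A(\kappa,M)$, staying in the region where, by Lemma~\ref{lem:spect}(c), $|(\zeta I-\hat D(\xi'))^{-1}|\lesssim |\zeta|^{-1}$ holds; this is legitimate for $|\xi'|=1$ and, by homogeneity $\hat D(r\xi)=r^k\hat D(\xi)$, for all $\xi'$ after rescaling.

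Next I would differentiate in $\xi$ under the integral. The derivatives $\partial_\xi^{\alpha}(\lambda-\hat D(\xi))^{-1}$ are, by the Leibniz-type formula derived in the excerpt (the analogue of \eqref{eq:dR} for $(\lambda-\hat D)^{-1}$ in place of $\hat R_\tau$), finite sums of terms of the form
\begin{equation*}
  (\lambda-\hat D(\xi))^{-1}(\partial^{\theta_1}\hat D)(\xi)(\lambda-\hat D(\xi))^{-1}\cdots(\partial^{\theta_m}\hat D)(\xi)(\lambda-\hat D(\xi))^{-1},
\end{equation*}
with $\theta_1+\cdots+\theta_m=\alpha$ and $m\le |\alpha|$. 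On $\gamma_{\xi}$ one has $|\lambda|\eqsim|\xi|^k$, each resolvent factor is $O(|\xi|^{-k})$, each $\partial^{\theta_i}\hat D(\xi)$ is homogeneous of degree $k-|\theta_i|$ hence $O(|\xi|^{k-|\theta_i|})$, and the length of $\gamma_\xi$ is $O(|\xi|^k)$. Multiplying: $(m+1)$ resolvent factors give $|\xi|^{-k(m+1)}$, the $\hat D$-derivatives give $|\xi|^{mk-|\alpha|}$, and the contour length gives $|\xi|^k$, for a total homogeneity $|\xi|^{-|\alpha|}$, while $|f(\lambda)|$ on $\gamma_\xi$ is bounded by $\sup\{|f(\lambda)|:\lambda\in S_{\mu},\ \tfrac12\kappa|\xi|^k\le|\lambda|\le 2M|\xi|^k\}$ — here one fattens the fixed contour slightly so that it lies in the annulus $A(\tfrac12\kappa,2M)$ rescaled, which is exactly the region appearing in the claimed bound. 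Summing the finitely many terms yields the asserted estimate.

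The bookkeeping of constants depending only on $\alpha$ and $\mu$ (through the geometry of $\gamma_1$ and the implicit constant in Lemma~\ref{lem:spect}(c)) is routine; the one point that deserves care, and which I expect to be the main obstacle, is making the homogeneity/scaling argument fully rigorous near $\xi=0$ and uniformly in $\xi$: one must justify that a single rescaled contour works for all $\xi\neq 0$ simultaneously, using compactness of the unit sphere to get a uniform resolvent bound there and then transporting it by the scaling $\hat D(r\xi)=r^k\hat D(\xi)$, rather than having a contour that depends on $\xi$ in an uncontrolled way. Once the contour is chosen homogeneously, differentiation under the integral sign is justified by the already-established $C^{\infty}$ dependence of $(\lambda-\hat D(\xi))^{-1}$ on $\xi$ off the spectrum, and the estimate follows by the term-by-term size count above.
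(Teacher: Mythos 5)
Your proof is correct and follows the same overall contour-deformation strategy as the paper's: represent $f(\hat D(\xi))$ by the Dunford--Riesz integral over a contour scaled to radius $\sim|\xi|^k$, exploit the resolvent bound of Lemma~\ref{lem:spect}(c) there, and use that the arc-length of the contour is also $\sim|\xi|^k$. Where you diverge is in how you bound $\partial_\xi^\alpha(\lambda-\hat D(\xi))^{-1}$ on the contour: you expand it by a Leibniz-type formula into products of resolvents $(\lambda-\hat D)^{-1}$ and derivatives $\partial^{\theta_i}\hat D$, then count homogeneities directly, which closes because on your contour $|\lambda|\eqsim|\xi|^k$ and so every resolvent factor is $O(|\xi|^{-k})$. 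The paper instead simply cites Proposition~\ref{lem:resolvEst}, the Mikhlin bound $|\partial_\xi^\alpha\hat R_\tau(\xi)|\lesssim|\xi|^{-|\alpha|}$ \emph{uniformly} in $\tau$; proving that uniformity needed the splitting into $\proj_{\kernel(\hat D)}$ and $\proj_{\range(\hat D)}$ and the rewriting $(-i\tau\partial^\theta\hat D)\hat R_\tau\proj_{\range(\hat D)}=(\partial^\theta\hat D)(\hat R_\tau-I)\hat D_R^{-1}\proj_{\range(\hat D)}$, exactly because the naive Leibniz count produces factors $\tau\partial^{\theta_i}\hat D$ that are not bounded uniformly in $\tau$. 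Your route is thus more elementary and self-contained for this specific proposition --- the constraint $|\lambda|\eqsim|\xi|^k$ on the rescaled contour sidesteps the uniformity issue --- while the paper's route reuses Proposition~\ref{lem:resolvEst}, which is needed anyway for the bisectoriality of $D$ in Theorem~\ref{thm:Dagain}, and so is shorter in context. Your treatment of the $\xi$-dependence of the contour (freeze $\gamma_{\xi_0}$ near each $\xi_0$ and invoke Cauchy's theorem) is a legitimate alternative to the paper's order of operations, which differentiates under the $\xi$-independent contour $\partial S_\theta$ first and deforms afterwards.
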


\begin{proof}
Pick   $\theta \in (\omega,\mu)$. 
By the definition   of the functional calculus, we have
\begin{equation}\label{eq:fcPi}
  \partial_{\xi} ^{\alpha}\widehat{f({D})}=\frac{1}{2\pi i}\int_{\partial S_{\theta}}f(\zeta)
   \partial_{\xi} ^{\alpha}(\zeta-\hat{D})^{-1}\ud\zeta=\partial_{\xi} ^{\alpha}f(\hat{D}).
\end{equation}
From~\eqref{eq:dR} one sees that   \(\partial_{\xi} ^{\alpha}(\zeta-\hat{D})^{-1}=\zeta^{-1}\partial_{\xi} ^{\alpha}\hat{R}_{i/\zeta}\)   has poles at \(\zeta\in\sigma(\hat{D})\). 
By Lemma \ref{lem:spect}, the non-zero spectrum satisfies
\begin{equation*}
  \sigma(\hat{D}(\xi))\setminus\{0\}\subset  \{\zeta\in S_{\omega}:\kappa|\xi|^{k}\leq\abs{\zeta}\leq M|\xi|^{k}\}. 
\end{equation*}
Hence, for a fixed \(\xi\in\R^n\), we may deform the integration path in~\eqref{eq:fcPi} to  
\begin{equation*}
  \partial [S_{\theta}\cap \disc(0,2M|\xi|^{k})\setminus\disc(0,\tfrac12\kappa|\xi|^{k})]\cup
  \partial [S_{\theta}\cap \disc(0,\varepsilon)]=:\gamma_1\cup\gamma_2.
\end{equation*}  
On \(\gamma_1\), there holds \(\abs{\zeta}\eqsim|\xi|^{k}\), while the length of the path is also \(\ell(\gamma_1)\eqsim|\xi|^{k}\). Hence  
\begin{equation*}\begin{split}
  &\Babs{\int_{\gamma_1}f(\zeta)
   \partial_{\xi} ^{\alpha}(\zeta-\hat{D}(\xi))^{-1}\ud\zeta} \\
 &\leq\sup\{\abs{f(\zeta)}:\zeta\in S_{\mu}\cap\disc(0,2M|\xi|^{k})\setminus\disc(0,\tfrac12\kappa|\xi|^{k})\}
     \int_{\gamma_1}\abs{\partial_{\xi} ^{\alpha}\hat{R}_{i/\zeta}(\xi)}\frac{\abs{\ud\zeta}}{\abs{\zeta}} \\
  &\lesssim  \sup\big\{\abs{f(\lambda)}: \lambda\in S_{\mu},\,\tfrac12\kappa|\xi|^{k}\leq\abs{\lambda}\leq 2M|\xi|^{k}\big\}
\cdot\abs{\xi}^{-\abs{\alpha}},
\end{split}\end{equation*} 
which has a bound of the desired form. On the other hand, the integral over \(\gamma_2\) vanishes in the limit as \(\varepsilon\downarrow 0\).
\end{proof}

We shall make use of operators with the following particular symbols:

\begin{corollary}\label{cor:sigmaLambda}
For $t \in \R$ and $\theta\in\N^k$ with $\abs{\theta}=k$, the symbols
\begin{equation*}
  \sigma_{t}(\xi)
  :=t^2\xi^{\theta}\hat{D}(\xi)(I+t^{2}\hat{D}(\xi)^2)^{-1}
\end{equation*}  
are \(C^{\infty}\) away from the origin and satisfy the Mikhlin multiplier estimates
\begin{equation*}
  \abs{\partial_{\xi}^{\alpha}\sigma_t(\xi)}
  \lesssim_{\alpha}\abs{\xi}^{-\abs{\alpha}}\qquad\forall\ \alpha\in\N^n.
\end{equation*}
\end{corollary}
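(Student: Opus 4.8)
The plan is to rewrite $\sigma_t(\xi)$ as a product of factors, each of which is already known (from the preceding results) to satisfy Mikhlin-type estimates, and then invoke the Leibniz rule together with the closure of the Mikhlin class under multiplication of appropriately homogeneous symbols. First I would split off the monomial: write $\xi^\theta$, which is smooth away from the origin and homogeneous of degree $k=\abs\theta$, so that $\abs{\partial_\xi^\alpha \xi^\theta}\lesssim_\alpha \abs\xi^{k-\abs\alpha}$. It therefore suffices to show that $t^2\hat D(\xi)(I+t^2\hat D(\xi)^2)^{-1}$ satisfies Mikhlin estimates with the gain $\abs\xi^{-k-\abs\alpha}$, since multiplying by $\xi^\theta$ and applying Leibniz then recovers exactly $\abs\xi^{-\abs\alpha}$.

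Next I would identify $t^2\hat D(\xi)(I+t^2\hat D(\xi)^2)^{-1}$ in terms of the resolvent-type symbols already analyzed. Indeed, with the notation introduced before Proposition~\ref{lem:resolvEst}, one has $\hat P_\tau(\xi)=(I+\tau^2\hat D(\xi)^2)^{-1}$ and $\hat Q_\tau(\xi)=\tau\hat D(\xi)\hat P_\tau(\xi)$, so that
\begin{equation*}
  t^2\hat D(\xi)(I+t^2\hat D(\xi)^2)^{-1}=\frac{t}{1}\cdot\big(t\hat D(\xi)\hat P_t(\xi)\big)=t\,\hat Q_t(\xi)
\end{equation*}
is not yet the right scaling. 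Instead I would argue directly: $t^2\hat D(\xi)\hat P_t(\xi)=t\,\hat Q_t(\xi)$, and Proposition~\ref{lem:resolvEst} gives $\abs{\partial_\xi^\alpha \hat Q_t(\xi)}\lesssim_\alpha \abs\xi^{-\abs\alpha}$ uniformly in $t$ only after we check that the estimates there are genuinely uniform in the parameter. The cleaner route, which I would actually follow, is to write $\sigma_t(\xi)=\xi^\theta\,\hat D_R^{-1}(\xi)\proj_{\range(\hat D(\xi))}\cdot t^2\hat D(\xi)^2\hat P_t(\xi)$, noting that on $\range(\hat D(\xi))$ one has $t^2\hat D(\xi)^2\hat P_t(\xi)=I-\hat P_t(\xi)$, while on $\kernel(\hat D(\xi))$ the whole symbol $\sigma_t$ vanishes because $\hat D(\xi)$ does. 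Thus
\begin{equation*}
  \sigma_t(\xi)=\xi^\theta\,\hat D_R^{-1}(\xi)\proj_{\range(\hat D(\xi))}\big(I-\hat P_t(\xi)\big),
\end{equation*}
and now every factor is under control: $\xi^\theta$ by the first step, $\hat D_R^{-1}(\xi)\proj_{\range(\hat D(\xi))}$ by Lemma~\ref{lem:DRest} with gain $\abs\xi^{-k-\abs\alpha}$, and $I-\hat P_t(\xi)$ by Proposition~\ref{lem:resolvEst} with gain $\abs\xi^{-\abs\alpha}$ uniformly in $t\in\R\subset S_\nu$ (taking $\nu$ any angle in $(0,\tfrac\pi2-\omega)$, which contains the real axis). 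Applying the Leibniz rule to this triple product and summing the exponents $k$ and $-k$ yields $\abs{\partial_\xi^\alpha\sigma_t(\xi)}\lesssim_\alpha\abs\xi^{-\abs\alpha}$.

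The main obstacle I anticipate is bookkeeping rather than conceptual: one must verify that the decomposition $I=\proj_{\kernel(\hat D(\xi))}+\proj_{\range(\hat D(\xi))}$ interacts correctly with $\hat P_t(\xi)$ — namely that $\hat P_t(\xi)$ preserves both summands (clear, since it is a function of $\hat D(\xi)$) and reduces to the identity on $\kernel(\hat D(\xi))$, so that $I-\hat P_t(\xi)$ annihilates the kernel and the identity $t^2\hat D(\xi)^2\hat P_t(\xi)=I-\hat P_t(\xi)$ holds on the range — and that the uniformity in $t$ in Proposition~\ref{lem:resolvEst} is exactly what is needed (it is, since that proposition is stated uniformly for $\tau\in S_\nu$ and $t\in\R$ lies in $S_\nu$). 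Once these compatibility points are in place, the result follows purely from Leibniz and the additivity of homogeneity exponents.
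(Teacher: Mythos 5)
Your proof is correct and, after the dead-end detour through $t\,\hat Q_t$, arrives at exactly the same decomposition $\sigma_t(\xi)=\big(\xi^\theta\hat D_R^{-1}(\xi)\proj_{\range(\hat D(\xi))}\big)\big(I-\hat P_t(\xi)\big)$ used in the paper, with the same appeals to Lemma~\ref{lem:DRest}, Proposition~\ref{lem:resolvEst}, the homogeneity of $\xi^\theta$, and Leibniz. The only difference is expository: the paper states the factorization directly, whereas you first note and discard the $t\,\hat Q_t$ route (a correct observation, but superfluous to the final argument).
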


\begin{proof}
The symbols are
\begin{equation*}
  \sigma_{t}(\xi)=\xi^{\theta}\hat{D}_R^{-1}(\xi)t^2\hat{D}^2(\xi)(I+t^{2}\hat{D}(\xi)^2)^{-1}
  =\big(\xi^{\theta}\hat{D}_R^{-1}(\xi)\proj_{\range(\hat{D}(\xi))}\big)
   \big(I-\hat{P}_t(\xi)\big),
\end{equation*}
where both factors are smooth in $\R^n\setminus\{0\}$, and the first satisfies the Mikhlin multiplier condition by Lemma~\ref{lem:DRest} and the second by Proposition~\ref{lem:resolvEst}.
\end{proof}

\section{$L^p$ theory for operators with constant coefficients}\label{sect:Lptheory}

In this section, we consider the Fourier multipliers in $L^p$, which correspond to the symbols studied in Section~\ref{sec:symbols}.
We denote by $R_{t},P_{t},Q_{t}$ the multiplier operators with the symbols $\hat{R}_{t},\hat{P}_{t},\hat{Q}_{t}$.
We start with the operators $D$ from Subsection \ref{subsec:general}. The estimates obtained in the preceding section give Theorem~\ref{thm:D}, restated here for convenience:  

\begin{theorem}\label{thm:Dagain}
Let $1<p<\infty$. Under the assumptions \eqref{D1} and \eqref{D2}, the operator $D$ is bisectorial   in $L^p$ with angle $\omega$,   and has a bounded $H^{\infty}$ functional calculus in $L^p$    with angle $\omega$.   
\end{theorem}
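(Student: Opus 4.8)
The strategy is to reduce the assertion to an application of the (vector-valued / matrix-valued) Mikhlin multiplier theorem, using the symbol estimates already assembled in Section~\ref{sec:symbols}. First I would observe that, by Lemma~\ref{lem:spect}(c), for every $\mu\in(\omega,\frac\pi2)$ and every $\lambda\in\C\setminus S_\mu$ one has $|\lambda|\,|(\lambda I-\hat D(\xi))^{-1}|\lesssim 1$ uniformly in $\xi\in\R^n$; combined with the $C^\infty$-dependence of $(\lambda-\hat D(\xi))^{-1}$ on $\xi$ noted before Proposition~\ref{lem:projEst}, and the Mikhlin bounds $|\partial_\xi^\alpha \hat R_\tau(\xi)|\lesssim_\alpha |\xi|^{-|\alpha|}$ from Proposition~\ref{lem:resolvEst} (applied with $\tau=i/\lambda$, so that $\hat R_{i/\lambda}(\xi)=\lambda(\lambda-\hat D(\xi))^{-1}$), the matrix-valued Mikhlin theorem shows that $\lambda(\lambda I-D)^{-1}$ is bounded on $L^p$ with norm uniform in $\lambda\in\C\setminus S_{\mu}$. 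This establishes bisectoriality in $L^p$ with angle $\omega$. One must also check that $D$ with the stated domain $\domain_p(D)$ is closed and densely defined — this is routine since $D$ is a Fourier multiplier with a polynomial symbol, so $\mathscr S$ is a core and the graph norm is complete.

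Next, for the bounded $H^\infty$ functional calculus, fix $\nu\in(\mu,\frac\pi2)$ with $\omega<\mu$, and let $\psi\in H_0^\infty(S_\nu)$. By Proposition~\ref{prop:fcPi}, $\psi(D)$ is the Fourier multiplier with symbol $\psi(\hat D(\xi))$, which satisfies
\begin{equation*}
  |\partial_\xi^\alpha \psi(\hat D(\xi))|\lesssim_\alpha |\xi|^{-|\alpha|}\sup\{|\psi(\lambda)|:\lambda\in S_\mu,\ \tfrac12\kappa|\xi|^k\le|\lambda|\le 2M|\xi|^k\}\le C_\alpha \|\psi\|_\infty |\xi|^{-|\alpha|}
\end{equation*}
for all multi-indices $\alpha$ with $|\alpha|\le \lfloor n/2\rfloor+1$ (the number of derivatives required by Mikhlin). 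The Mikhlin multiplier theorem then yields $\|\psi(D)\|_{\bddlin(L^p)}\lesssim_p \|\psi\|_\infty$, with the implied constant independent of $\psi$; this is exactly the defining estimate for a bounded $H^\infty(S_\nu)$ calculus with angle $\omega$. Since $\nu\in(\omega,\frac\pi2)$ was arbitrary, $D$ has a bounded $H^\infty$ calculus with angle $\omega$.

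The main point requiring care — though not really an obstacle given Section~\ref{sec:symbols} — is the bookkeeping that the Mikhlin constants produced by Propositions~\ref{lem:resolvEst} and \ref{prop:fcPi} are genuinely uniform: in the resolvent case uniform over $\lambda\in\C\setminus S_\mu$ (equivalently $\tau\in S_\nu$ for the appropriate $\nu$), and in the functional-calculus case uniform over $\psi$ in the unit ball of $H_0^\infty(S_\nu)$. Both follow by inspection of the cited proofs, since the estimates there are stated with constants depending only on $\alpha$ (and $\mu$), not on $\lambda$ or $\psi$. A minor additional remark is that to invoke the standard scalar Mikhlin theorem one applies it entrywise to the $N\times N$ matrix symbol, or equivalently one uses the operator-valued version; either way the $\C^N$-dimension only affects constants. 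Finally, the value of the angle: because the resolvent bound of Lemma~\ref{lem:spect}(c) and the functional calculus estimate of Proposition~\ref{prop:fcPi} hold for \emph{every} $\mu>\omega$, both the bisectoriality angle and the $H^\infty$-calculus angle are exactly $\omega$, as claimed. (For part~(a) of the Remark, one repeats the argument with the sector $\Sigma_\omega$, $0\le\omega<\pi$, in place of the bisector, and for part~(c) one invokes Bourgain's operator-valued Mikhlin theorem in place of the scalar one.)
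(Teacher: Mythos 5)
Your proposal is correct and follows essentially the same route as the paper: the paper's proof also derives bisectoriality from Proposition~\ref{lem:resolvEst} and the bounded $H^\infty$ calculus from Proposition~\ref{prop:fcPi}, both via the Mikhlin multiplier theorem. Your additional remarks on uniformity of the Mikhlin constants and on the angle being exactly $\omega$ simply make explicit what the paper leaves implicit.
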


\begin{proof}
By   the   Mikhlin multiplier theorem, the bisectoriality follows from Proposition \ref{lem:resolvEst}, while the boundedness of the $H^\infty$ functional calculus follows from Proposition \ref{prop:fcPi}.
\end{proof}
 
The coercivity condition \eqref{D1} for the symbol has the following reincarnation on the level of operators:

\begin{proposition}\label{prop:coer}
For all \(u\in\overline{\range_{p}(D)}\cap\domain_p(D)\), there holds \(u\in\domain_p(\nabla^k)\), and
\begin{equation*}
  \Norm{\nabla^k u}{p}\lesssim\Norm{D u}{p}.
\end{equation*}
\end{proposition}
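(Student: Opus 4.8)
The plan is to reduce the operator estimate $\Norm{\nabla^k u}{p}\lesssim\Norm{Du}{p}$ to a Fourier-multiplier statement and then invoke Mikhlin, exactly in the spirit of the proofs of Theorem~\ref{thm:Dagain} and Corollary~\ref{cor:sigmaLambda}. The key point is that on the range of $D$, the differentiation $\nabla^k$ can be recovered from $D$ by composing with a bounded multiplier. Concretely, I would write, for $u\in\overline{\range_p(D)}\cap\domain_p(D)$ and any multi-index $\theta$ with $\abs{\theta}=k$,
\begin{equation*}
  \partial^{\theta}u = \big((-i)^{-k}\xi^{\theta}\big)\hat{\,}\;u
  = (-i)^{-k}\,\xi^{\theta}\hat{D}_R^{-1}(\xi)\,\proj_{\range(\hat{D}(\xi))}\,\big(\hat{D}(\xi)\,\hat u(\xi)\big),
\end{equation*}
where $\hat D_R^{-1}(\xi)$ is the inverse of $\hat D(\xi)$ on its range, introduced before Lemma~\ref{lem:DRest}. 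The first ingredient to justify is that for $u$ in (the closure of) $\range_p(D)$ one has $\hat u(\xi)\in\range(\hat D(\xi))$ for a.e.\ $\xi$, so that $\proj_{\range(\hat D(\xi))}\hat u(\xi)=\hat u(\xi)$; this should follow from the fact that $\proj_{\range(\hat D(\xi))}$ is itself a bounded Mikhlin multiplier (Proposition~\ref{lem:projEst}) which acts as the identity on $\range_p(D)$ and hence on its closure, together with $\xi^\theta \hat D_R^{-1}(\xi)\proj_{\kernel(\hat D(\xi))}=0$.

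The second ingredient is that the symbol $m_\theta(\xi):=\xi^{\theta}\hat{D}_R^{-1}(\xi)\proj_{\range(\hat{D}(\xi))}$ satisfies the Mikhlin condition $\abs{\partial_\xi^\alpha m_\theta(\xi)}\lesssim_\alpha\abs{\xi}^{-\abs\alpha}$: since $\abs{\xi^\theta}\le\abs\xi^k$ and, by Lemma~\ref{lem:DRest}, $\abs{\partial_\xi^\alpha(\hat D_R^{-1}\proj_{\range(\hat D)})}\lesssim\abs\xi^{-k-\abs\alpha}$, the Leibniz rule gives the claim (this is the same computation already performed for $\sigma_t$ in the proof of Corollary~\ref{cor:sigmaLambda}, minus the resolvent factor). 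Hence the multiplier operator $T_\theta$ with symbol $m_\theta$ is bounded on $L^p$ for $1<p<\infty$ by the Mikhlin multiplier theorem, and the displayed identity reads $\partial^\theta u=(-i)^{-k}T_\theta(Du)$ on $\range_p(D)$, giving $\Norm{\partial^\theta u}{p}\lesssim\Norm{Du}{p}$ there; summing over $\abs\theta=k$ yields $\Norm{\nabla^k u}{p}\lesssim\Norm{Du}{p}$ on $\range_p(D)$.

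Finally I would remove the density restriction: given $u\in\overline{\range_p(D)}\cap\domain_p(D)$, approximate it in the graph norm of $D$ by $u_j\in\range_p(D)\cap\domain_p(D)$ (using that $D$ is closed and $\range_p(D)\cap\domain_p(D)$ is a core for $D$ restricted to $\overline{\range_p(D)}$, which follows from the $H^\infty$-calculus / bisectoriality of $D$ via $u_j:=(I+j^{-1}D)^{-1}u$ or a similar regularization lying in $\overline{\range_p(D)}$). The already-established inequality applied to $u_j-u_k$ shows $(\nabla^k u_j)$ is Cauchy in $L^p$, so $u\in\domain_p(\nabla^k)$ with $\nabla^k u_j\to\nabla^k u$, and passing to the limit gives the estimate for $u$. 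The main obstacle is the bookkeeping in this last density step — making sure the regularized approximants stay inside $\overline{\range_p(D)}$ and converge in graph norm — but this is exactly the kind of argument available from the bisectoriality and Hodge-type splitting $L^p=\kernel_p(D)\oplus\overline{\range_p(D)}$ furnished by Theorem~\ref{thm:Dagain} and Lemma~\ref{lem:spect}(b); everything else is a routine Mikhlin estimate.
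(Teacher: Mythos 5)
Your strategy is the same as the paper's in spirit: both reduce the coercivity estimate to a Mikhlin multiplier statement built from $\hat D_R^{-1}(\xi)\proj_{\range(\hat D(\xi))}$ (Lemma~\ref{lem:DRest}) and the range projector (Proposition~\ref{lem:projEst}). The difference is that you work directly with the homogeneous degree-zero symbol
$m_\theta(\xi)=\xi^\theta\hat D_R^{-1}(\xi)\proj_{\range(\hat D(\xi))}$,
whereas the paper uses the truncated symbols $\sigma_t(\xi)=t^2\xi^\theta\hat D(\xi)(I+t^2\hat D(\xi)^2)^{-1}$ of Corollary~\ref{cor:sigmaLambda}, which are bounded, vanish at the origin, and give genuinely \emph{bounded} operators $t^2\partial^\theta DP_t$ with no domain issues.

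There are two concrete gaps in your proposal. First, the identity $\partial^\theta u = (-i)^{-k}T_\theta(Du)$ needs more care than ``the displayed identity reads''. The symbol $m_\theta$ is homogeneous of degree zero, hence bounded but discontinuous at the origin; multiplying a general tempered distribution by it is not licit, and you must show that $T_\theta\circ D$ (defined by density from $\mathscr{S}$) agrees with $(-i)^k\partial^\theta$ on $\overline{\range_p(D)}\cap\domain_p(D)$ as unbounded operators. This requires, for instance, composing $T_\theta$ with $D^2$ (so the resulting symbol $\xi^\theta\hat D(\xi)$ is a polynomial) and a core argument, or a duality argument against test functions; either way the Fourier-side computation ``$\hat u(\xi)\in\range(\hat D(\xi))$ a.e.'' is not by itself enough.

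Second, and more seriously, your proposed regularizer $u_j=(I+j^{-1}D)^{-1}u$ does \emph{not} lie in $\range_p(D)$: writing $(I+j^{-1}D)^{-1}u = u - j^{-1}D(I+j^{-1}D)^{-1}u$ shows that $u_j$ differs from $u$ by an element of $\range_p(D)$, so $u_j\in\overline{\range_p(D)}\cap\domain_p(D)$, but if $u\notin\range_p(D)$ neither is $u_j$. Your ``already-established inequality'' is then not applicable to $u_j-u_k$. The paper's regularizer $u_t:=t^2 D^2 P_t u = (I-P_t)u$ cures exactly this: it is $D$ applied to $t^2 D P_t u$, hence lies in $\range_p(D)$, converges to $\proj_{\overline{\range_p(D)}}u=u$ in $L^p$ as $t\to\infty$, and moreover $u_t=t^2DP_t(Du)$ makes the gradient estimate immediate from the boundedness of $t^2\partial^\theta DP_t$. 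The paper then closes the argument by pairing with a test function, which handles the limit without needing a core of $D$ or $D^2$. If you replace your regularizer by $(I-P_t)u$ and justify the multiplier identity via the smooth symbols $\sigma_t$ rather than the singular $m_\theta$, your argument becomes essentially the paper's proof.
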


\begin{proof}
For \(u\in\domain_p(D)\cap\overline{\range_{p}(D)}\), we have (for real $t$) 
\begin{equation*}
  u_{t}:=t^2 DP_t (D u)=t^2D^2 P_t u
  =(I-P_t) u \to \proj_{\overline{\range_{p}(D)}}u=u,\qquad t\to\infty.
\end{equation*}
The operators \(t^2\partial^{\theta} DP_t\), \(\abs{\theta}=k\), are bounded on \(L^p\) by Corollary~\ref{cor:sigmaLambda}. It follows that \(u_{t}\in\domain_p(\partial^{\theta})\), and 
\begin{equation*}
  \Norm{\partial^{\theta} u_{t}}{p}
  =\Norm{t^2\partial^{\theta} DP_t(D u)}{p}
  \lesssim\Norm{Du}{p}.
\end{equation*}

Let \(w\) be a test function in the dual space. Then
\begin{equation*}\begin{split}
  \abs{\pair{\partial^{\theta} u}{w}}
  =\abs{\pair{u}{\partial^{\theta} w}}
  &=\lim_{t\to\infty}\abs{\pair{u_t}{\partial^{\theta} w}}
  =\lim_{t\to\infty}\abs{\pair{\partial^{\theta} u_{t}}{w}} \\
  &\leq\liminf_{t\to\infty}\Norm{\partial^{\theta} u_{t}}{p}\Norm{w}{p'}
  \lesssim\Norm{D u}{p}\Norm{w}{p'}.
\end{split}\end{equation*}
Thus \(u\in\domain_p(\partial^{\theta})\) and \(\Norm{\partial^{\theta} u}{p}\lesssim\Norm{D u}{p}\) for all \(\abs{\theta}=k\).
\end{proof}

We turn to the Hodge-Dirac operators $\Pi=\Gamma+\rGamma$ which satisfy the hypotheses at the start of Subsection \ref{subsec:unpert}, and note that they then satisfy the conditions on $D$ with $k=1$. In particular, Theorem \ref{thm:Dagain} and Proposition \ref{prop:coer} hold for $D=\Pi$ and $k=1$. Moreover there is an operator version of the symbol condition \eqref{Pi3}: 

\begin{lemma}\label{lem:kernelInters}
There holds \(\kernel_{p}(\Pi)=\kernel_{p}(\Gamma)\cap\kernel_{p}(\rGamma)\).
\end{lemma}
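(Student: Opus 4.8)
The inclusion $\kernel_p(\Gamma)\cap\kernel_p(\rGamma)\subseteq\kernel_p(\Pi)$ is trivial: if $\Gamma u=0$ and $\rGamma u=0$ in the distributional sense, then $\Pi u=\Gamma u+\rGamma u=0$, and $u$ lies in the domain of $\Pi$ because it lies in the intersection of the domains of $\Gamma$ and $\rGamma$. The content is the reverse inclusion, which is exactly the operator-level counterpart of the pointwise symbol identity \eqref{Pi3}. The plan is to pass to the Fourier side: for $u\in\kernel_p(\Pi)$ we have $\hat\Pi(\xi)\hat u(\xi)=0$ as a tempered distribution, hence $\hat u(\xi)$ takes values in $\kernel(\hat\Pi(\xi))=\kernel(\hat\Gamma(\xi))\cap\kernel(\hat\rGamma(\xi))$ for a.e.\ $\xi$, so formally $\hat\Gamma(\xi)\hat u(\xi)=0=\hat\rGamma(\xi)\hat u(\xi)$, i.e.\ $\Gamma u=0=\rGamma u$. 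The difficulty is that this must be done in $L^p$, where one cannot multiply a distribution freely by the unbounded symbols $\hat\Gamma(\xi),\hat\rGamma(\xi)$, so the argument has to be routed through bounded multipliers.

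The clean way to do this is via the bounded projections already produced in Section~\ref{sec:symbols}. Since $\Pi$ satisfies the hypotheses on $D$ with $k=1$, Lemma~\ref{lem:spect}(b) gives $\C^N=\kernel(\hat\Pi(\xi))\oplus\range(\hat\Pi(\xi))$, and by Proposition~\ref{lem:projEst} the projection $\proj_{\kernel(\hat\Pi(\xi))}$ is a Mikhlin multiplier; denote the corresponding bounded operator on $L^p$ by $\proj_{\kernel}$. First I would check that $\proj_\kernel$ is precisely the projection onto $\kernel_p(\Pi)$ along $\overline{\range_p(\Pi)}$ furnished by the bisectoriality of $\Pi$ (Theorem~\ref{thm:Dagain}): indeed $\proj_\kernel = \lim_{t\to\infty}(I+t^2\Pi^2)^{-1}$ in the strong operator topology, since on the Fourier side $(I+t^2\hat\Pi(\xi)^2)^{-1}=\hat P_t(\xi)\to\proj_{\kernel(\hat\Pi(\xi))}$ pointwise and boundedly (using Lemma~\ref{lem:spect} to control $\hat P_t$ uniformly and dominated convergence). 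In particular $u\in\kernel_p(\Pi)$ implies $u=\proj_\kernel u$, i.e.\ $\hat u(\xi)\in\kernel(\hat\Pi(\xi))$ for a.e.\ $\xi$.

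Now I would exploit \eqref{Pi3}. For $u\in\kernel_p(\Pi)$ we have $\hat u(\xi)=\proj_{\kernel(\hat\Pi(\xi))}\hat u(\xi)$, and since $\kernel(\hat\Pi(\xi))\subseteq\kernel(\hat\Gamma(\xi))$ we get $\hat\Gamma(\xi)\proj_{\kernel(\hat\Pi(\xi))}=0$ for every $\xi\neq 0$; but $\hat\Gamma(\xi)=\hat\Gamma(\xi)\bigl(\proj_{\kernel(\hat\Pi(\xi))}+\proj_{\range(\hat\Pi(\xi))}\bigr)$, so $\hat\Gamma(\xi)=\hat\Gamma(\xi)\proj_{\range(\hat\Pi(\xi))}$ on $\range(\hat\Pi(\xi))$ and hence $\Gamma$ acts on $u$ through the bounded multiplier $\hat\Gamma(\xi)\hat\Pi_R^{-1}(\xi)\proj_{\range(\hat\Pi(\xi))}$ composed with $\hat\Pi(\xi)$: concretely, test against a Schwartz function $w$ in the dual and write $\pair{\Gamma u}{w}=\pair{u}{\Gamma^* w}$; approximating $u$ by $u_t:=(I-\hat P_t(D))$-truncations as in the proof of Proposition~\ref{prop:coer}, each $u_t$ lies in $\domain_p(\Gamma)\cap\domain_p(\rGamma)$ and $\Gamma u_t$ is given by a Mikhlin multiplier applied to $\Pi u=0$, hence $\Gamma u_t=0$; passing to the limit gives $\pair{\Gamma u}{w}=0$ for all test $w$, so $\Gamma u=0$ and similarly $\rGamma u=0$. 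Therefore $u\in\kernel_p(\Gamma)\cap\kernel_p(\rGamma)$, completing the proof. The main obstacle is the bookkeeping in the last step — ensuring that all the symbol manipulations ($\hat\Gamma(\xi)=\hat\Gamma(\xi)\hat\Pi_R^{-1}(\xi)\hat\Pi(\xi)$ on $\range(\hat\Pi(\xi))$, plus the $\proj_{\kernel}$-identity on $u$) are realized by genuinely bounded operators on $L^p$ so that the distributional identities can be justified by the density/truncation argument rather than by illegal pointwise multiplication of distributions by unbounded symbols.
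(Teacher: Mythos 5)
The inclusion $\supseteq$ and the identification of $\Upsilon(\xi)=\hat\Gamma(\xi)\hat\Pi_R^{-1}(\xi)\proj_{\range(\hat\Pi(\xi))}$ as the bounded Mikhlin symbol realizing $\hat\Gamma(\xi)=\Upsilon(\xi)\hat\Pi(\xi)$ via $(\Pi3)$ are exactly the right ingredients, and this much matches the paper. But your final step is broken, and it is precisely at the point where the real work in this lemma lives.

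First, the truncation $u_t:=(I-P_t)u$ is identically zero on $\kernel_p(\Pi)$: for $u\in\kernel_p(\Pi)$ one has $\Pi u=0$, hence $(I+t^2\Pi^2)u=u$, hence $P_t u=u$ and $u_t\equiv 0$, so the limit of $u_t$ is $0$, not $u$. The convergence $u_t\to u$ in Proposition~\ref{prop:coer} was stated for $u\in\overline{\range_p(D)}\cap\domain_p(D)$, the complementary Hodge piece; you cannot borrow it for kernel elements. Second, and more substantively, the singularity of $\Upsilon$ at $\xi=0$ means the formal chain $\widehat{\Gamma u}=\Upsilon\,\hat\Pi\hat u=0$ is only valid on $\R^n\setminus\{0\}$; what you can legitimately conclude (and what the paper concludes, by testing against $\psi\Upsilon$ for $\psi\in C_c^\infty(\R^n\setminus\{0\})$) is merely that the distribution $\widehat{\Gamma u}$ is supported at the origin, i.e.\ that $\Gamma u$ is a polynomial $P$. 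Getting from there to $P=0$ is the actual content: the paper writes $\Gamma u=\sum_j\partial_j u_j$ with $u_j=-i\hat\Gamma_j u\in L^p$, takes $\hat\phi\in\mathscr{S}$ identically $1$ near $0$, and observes $P=P*\phi$ with $P*\phi(y)=\sum_j\pair{u_j}{(\partial_j\phi)(y-\cdot)}\to 0$ as $\abs{y}\to\infty$, so the polynomial must vanish. This decay argument is missing from your proposal and cannot be replaced by the (vanishing) $u_t$-approximation. Your second paragraph, identifying $\proj_\kernel$ with $\lim_{t\to\infty}P_t$, is plausible but not needed — the paper works directly from $\hat\Pi\hat u=0$ without invoking it.
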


\begin{proof}
The inclusion \(\supseteq\) is clear. Let \(u\in\kernel_{p}(\Pi)\). Then \(\hat\Pi\hat{u}=0\) in the sense of distributions. It follows from Lemma~\ref{lem:DRest} that the function \(\Upsilon(\xi):=\hat{\Gamma}(\xi)\hat{\Pi}_{R}^{-1}(\xi)\proj_{\range(\hat{\Pi}(\xi))}\) is \(C^{\infty}\) away from the origin. Hence, if \(\psi\in C_c^{\infty}(\R^n\setminus\{0\})\), then also \(\psi\Upsilon\) is in the same class, and the product \(\psi\Upsilon\cdot\hat\Pi\hat{u}\) is well-defined and vanishes as a distribution. But
\begin{equation*}
  \Upsilon(\xi)\hat\Pi(\xi)
  =\hat{\Gamma}(\xi)\hat{\Pi}_{R}^{-1}(\xi)\proj_{\range(\hat{\Pi}(\xi))}\hat\Pi(\xi)
  =\hat{\Gamma}(\xi)\proj_{\range(\hat{\Pi}(\xi))}
  =\hat{\Gamma}(\xi)
\end{equation*}
by \eqref{Pi3}. Thus we have shown that \(\psi\hat{\Gamma}\hat{u}=\psi\Upsilon\hat{\Pi}\hat{u}=0\) for every \(\psi\in C_c^{\infty}(\R^n\setminus\{0\})\). This means that the distribution \(\hat\Gamma\hat{u}\) is at most supported at the origin, and hence \(\Gamma u=P\), a polynomial. But also \(\Gamma u=-i\sum_{j=1}^n\hat\Gamma_j\partial_j u=\sum_{j=1}^n\partial_j u_j\), where \(u_j=-i\hat\Gamma_j u\in L^p\). Let \(\hat\phi\in\mathscr{S}(\R^n)\) be identically one in a neighbourhood of the origin. Then \(\hat{P}=\hat{P}\hat\phi\) and hence \(P=P*\phi\). But
\begin{equation*}
  P*\phi(y)=\pair{P}{\phi(y-\cdot)}
  =\sum_{j=1}^n\pair{u_j}{(\partial_j\phi)(y-\cdot)}
  \to 0
\end{equation*}
as \(\abs{y}\to\infty\) (using just the fact that \(u_j\in L^p\) and \(\partial_j\phi\in L^{p'}\)), and a polynomial with this property must vanish identically. This shows that \(\Gamma u=P=0\), and then also \(\rGamma u=\Pi u-\Gamma u=0\).
\end{proof}

This then implies: 

\begin{proposition}
The operator identity $\Pi = \Gamma +\rGamma$ holds in $L^p$, 
in the sense that \(\domain_p(\Pi)=\domain_p(\Gamma)\cap\domain_p(\rGamma)\) and \(\Pi u=\Gamma u+\rGamma u\) for all \(u\in\domain_p(\Pi)\).
\end{proposition}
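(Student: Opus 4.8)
The plan is to deduce the proposition from the constant-coefficient results of Sections~\ref{sec:symbols} and~\ref{sect:Lptheory} applied to $D=\Pi$ with $k=1$, which is legitimate because $\Pi=\Gamma+\rGamma$ satisfies \eqref{D1} and \eqref{D2} by Definition~\ref{def:unpert}. One inclusion, together with the value identity, is immediate: if $u\in\domain_p(\Gamma)\cap\domain_p(\rGamma)$, then in the distributional sense $\widehat{\Gamma u+\rGamma u}=(\hat\Gamma+\hat\rGamma)\hat u=\hat\Pi\hat u$, so $\Pi u=\Gamma u+\rGamma u\in L^p$ and hence $u\in\domain_p(\Pi)$. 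Conversely, once we know $\domain_p(\Pi)\subseteq\domain_p(\Gamma)\cap\domain_p(\rGamma)$, the very same distributional identity gives $\Pi u=\Gamma u+\rGamma u$ for every $u\in\domain_p(\Pi)$. So the whole matter reduces to proving the reverse inclusion $\domain_p(\Pi)\subseteq\domain_p(\Gamma)\cap\domain_p(\rGamma)$.

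To prove it, I would first record that, by Theorem~\ref{thm:Dagain} (with $D=\Pi$), the operator $\Pi$ has a bounded $H^\infty$ functional calculus in $L^p$, hence is R-bisectorial in $L^p$ by Remark~\ref{fcRsect}; consequently $L^p$ splits as $\kernel_p(\Pi)\oplus\overline{\range_p(\Pi)}$ with a bounded spectral projection $\proj_0$ onto $\kernel_p(\Pi)$ (one may take $\proj_0=\lim_{t\to\infty}(I+t^2\Pi^2)^{-1}$ in the strong operator topology). Given $u\in\domain_p(\Pi)$, I would then write $u=u_0+u_1$ with $u_0:=\proj_0 u\in\kernel_p(\Pi)$ and $u_1:=(I-\proj_0)u\in\overline{\range_p(\Pi)}$; since $u_0\in\kernel_p(\Pi)\subseteq\domain_p(\Pi)$, the complement $u_1=u-u_0$ also lies in $\domain_p(\Pi)$.

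It remains to treat the two pieces separately. For $u_0$, Lemma~\ref{lem:kernelInters} gives $\kernel_p(\Pi)=\kernel_p(\Gamma)\cap\kernel_p(\rGamma)$, so $\Gamma u_0=\rGamma u_0=0$ and trivially $u_0\in\domain_p(\Gamma)\cap\domain_p(\rGamma)$. For $u_1\in\overline{\range_p(\Pi)}\cap\domain_p(\Pi)$, Proposition~\ref{prop:coer} (with $D=\Pi$, $k=1$) yields $u_1\in\domain_p(\nabla)$, i.e.\ $\partial_j u_1\in L^p$ for every $j$; since $\Gamma u_1=-i\sum_{j=1}^n\hat\Gamma_j\partial_j u_1$ and $\rGamma u_1=-i\sum_{j=1}^n\hat\rGamma_j\partial_j u_1$ are finite linear combinations of these functions with constant matrix coefficients, both belong to $L^p$, so $u_1\in\domain_p(\Gamma)\cap\domain_p(\rGamma)$. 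Adding the two pieces yields $u\in\domain_p(\Gamma)\cap\domain_p(\rGamma)$, completing the reverse inclusion. The only slightly delicate point — and the one I would be most careful to phrase correctly — is the existence of the null-space/range decomposition of $L^p$ and the fact that $\proj_0$ maps $\domain_p(\Pi)$ into itself; both are standard features of R-bisectorial operators on the reflexive space $L^p$ (already invoked in Section~\ref{sec:prelim}), so this is a matter of citing the right fact rather than a genuine obstacle.
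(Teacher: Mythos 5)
Your proof is correct and follows essentially the same route as the paper: decompose $u\in\domain_p(\Pi)$ via $L^p=\kernel_p(\Pi)\oplus\overline{\range_p(\Pi)}$, use Lemma~\ref{lem:kernelInters} on the kernel piece and Proposition~\ref{prop:coer} (with $k=1$) on the range piece to conclude $\domain_p(\Pi)\subseteq\domain_p(\nabla)\cup\kernel_p(\Pi)\subseteq\domain_p(\Gamma)\cap\domain_p(\rGamma)$. The only cosmetic difference is that the paper appeals directly to bisectoriality (rather than passing through the $H^\infty$ calculus and R-bisectoriality) to get the kernel/range splitting, and leaves the value identity to the distributional definition, both of which you spell out a bit more fully.
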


\begin{proof}
It is clear that \(\domain_p(\Gamma)\cap\domain_p(\rGamma)\subseteq\domain_p(\Pi)\).
Since \(\Pi\) is bisectorial in \(L^p\), there is the topological decomposition \(L^p=\kernel_{p}(\Pi)\oplus\overline{\range_{p}(\Pi)}\). Write \(u\in\domain_p(\Pi)\) as \(u=u_0+u_1\) in this decomposition. Then \(u_0\in\kernel_{p}(\Pi)=\kernel_{p}(\Gamma)\cap\kernel_{p}(\rGamma)\), and \(u_1=u-u_0\in\domain_p(\Pi)\cap\overline{\range_{p}(\Pi)}\). By Proposition~\ref{prop:coer}, \(u_1\in\domain_p(\nabla)\subseteq\domain_p(\Gamma)\cap\domain_p(\rGamma)\). Thus also \(\domain_p(\Pi)\subseteq\domain_p(\Gamma)\cap\domain_p(\rGamma)\). The coincidende of \(\Pi u\) and \(\Gamma u+\rGamma u\) is clear from the distributional definition.
\end{proof}

We are ready to prove Theorem~\ref{thm:unpert}, restated here: 

\begin{theorem}\label{thm:unpertAgain} Let $\Pi$ be a Hodge-Dirac operator with constant coefficients, and let $1<p<\infty$. Then
the operator \(\Pi\) has a bounded \(H^{\infty}\) functional calculus in \(L^p\)   with angle $\omega$,   and    satisfies   the following Hodge decomposition
\begin{equation*}
  L^{p} = \kernel_{p}(\Pi)\oplus \overline{\range_{p}(\Gamma)}\oplus \overline{\range_{p}(\rGamma)},
\end{equation*} 
where $\overline{\range_{p}(\Pi)}=\overline{\range_{p}(\Gamma)}\oplus\overline{\range_{p}(\rGamma)}$. 
\end{theorem}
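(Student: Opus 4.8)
The plan is to combine the results already established in the section. The bounded $H^{\infty}$ functional calculus with angle $\omega$ is immediate from Theorem~\ref{thm:Dagain} applied to $D=\Pi$ with $k=1$, since we have already observed that a Hodge-Dirac operator $\Pi=\Gamma+\rGamma$ satisfies assumptions \eqref{D1} and \eqref{D2} with $k=1$: indeed \eqref{Pi1} is exactly \eqref{D1} and \eqref{Pi2} is exactly \eqref{D2}. So the only real content left to prove is the Hodge decomposition, i.e.\ that the space $\overline{\range_p(\Pi)}$ splits as $\overline{\range_p(\Gamma)}\oplus\overline{\range_p(\rGamma)}$, and that this sum is topologically direct and complements $\kernel_p(\Pi)$.

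First I would recall that, since $\Pi$ is bisectorial in $L^p$ (Theorem~\ref{thm:Dagain}), we have the topological splitting $L^p=\kernel_p(\Pi)\oplus\overline{\range_p(\Pi)}$, so it suffices to analyze $\overline{\range_p(\Pi)}$. The inclusions $\range_p(\Gamma)\subseteq\range_p(\Pi)$ are \emph{not} literally true, so instead I would argue as follows. Since $\Pi=\Gamma+\rGamma$ on $\domain_p(\Pi)=\domain_p(\Gamma)\cap\domain_p(\rGamma)$, and since the nilpotence relations $\overline{\range_p(\Gamma)}\subseteq\kernel_p(\Gamma)$ and $\overline{\range_p(\rGamma)}\subseteq\kernel_p(\rGamma)$ hold, one gets that $\Pi$ acts as $\rGamma$ on $\range_p(\Gamma)\cap\domain_p(\rGamma)$ and as $\Gamma$ on $\range_p(\rGamma)\cap\domain_p(\Gamma)$; combined with Lemma~\ref{lem:kernelInters}, this should give that $\Gamma u$ and $\rGamma u$ are the ``$\Gamma$-range'' and ``$\rGamma$-range'' pieces of $\Pi u$. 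The cleanest route, though, is to produce bounded projections directly at the symbol level. Using the splitting $\C^N=\kernel(\hat\Pi(\xi))\oplus\range(\hat\Pi(\xi))$ from Lemma~\ref{lem:spect}, and then decomposing $\range(\hat\Pi(\xi))$ further, I would define Fourier multiplier symbols $\hat\proj_\Gamma(\xi)$ and $\hat\proj_{\rGammaSmall}(\xi)$ projecting onto $\range(\hat\Gamma(\xi))$ and $\range(\hat\rGamma(\xi))$ respectively within $\range(\hat\Pi(\xi))$. Condition \eqref{Pi3}, together with nilpotence, is what guarantees $\range(\hat\Pi(\xi))=\range(\hat\Gamma(\xi))\oplus\range(\hat\rGamma(\xi))$ as a direct (algebraic) sum for each $\xi\neq0$; these projections are homogeneous of degree zero and smooth away from the origin by the same Dunford--Riesz contour-integral argument as in Proposition~\ref{lem:projEst}, hence are Mikhlin multipliers and bounded on $L^p$.

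The key steps in order: (1) quote Theorem~\ref{thm:Dagain} for the $H^{\infty}$ calculus and the bisectorial splitting $L^p=\kernel_p(\Pi)\oplus\overline{\range_p(\Pi)}$; (2) establish the pointwise-in-$\xi$ direct sum $\range(\hat\Pi(\xi))=\range(\hat\Gamma(\xi))\oplus\range(\hat\rGamma(\xi))$ from \eqref{Pi3} and nilpotence (using that $\hat\Pi e=0$ iff $\hat\Gamma e=\hat\rGamma e=0$, and $\hat\Gamma^2=\hat\rGamma^2=0$; a short linear-algebra check of dimensions); (3) build the homogeneous-degree-zero projection symbols $\hat\proj_\Gamma,\hat\proj_{\rGammaSmall}$ and verify the Mikhlin bounds exactly as in Proposition~\ref{lem:projEst}, so they define bounded projections $\proj_\Gamma,\proj_{\rGammaSmall}$ on $L^p$ with $\proj_\Gamma+\proj_{\rGammaSmall}=\proj_{\overline{\range_p(\Pi)}}$; (4) identify $\range(\proj_\Gamma)=\overline{\range_p(\Gamma)}$ and $\range(\proj_{\rGammaSmall})=\overline{\range_p(\rGamma)}$ --- the inclusion $\overline{\range_p(\Gamma)}\subseteq\range(\proj_\Gamma)$ is easy since $\hat\proj_\Gamma\hat\Gamma=\hat\Gamma$, and the reverse is obtained by writing an element $v=\proj_\Gamma v$, noting $v\in\overline{\range_p(\Pi)}$ so $v=\lim t^2\Pi^2 P_t v=\lim \Pi(t^2\Pi P_t v)$, and showing $\proj_\Gamma\Pi w = \Gamma w'$-type convergence places the limit in $\overline{\range_p(\Gamma)}$; (5) conclude $L^p=\kernel_p(\Pi)\oplus\overline{\range_p(\Gamma)}\oplus\overline{\range_p(\rGamma)}$ as a topological direct sum.

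I expect the main obstacle to be step (4), the operator-theoretic identification of the ranges of the symbol-level projections with the closures $\overline{\range_p(\Gamma)}$ and $\overline{\range_p(\rGamma)}$; the algebra at the symbol level is clean, but transferring it faithfully to $L^p$ requires a density/approximation argument (approximating elements of $\overline{\range_p(\Pi)}$ by $t^2\Pi^2 P_t u$ and tracking which summand each piece lands in), and one must be careful that $\proj_\Gamma$ and $\proj_{\rGammaSmall}$ are genuine (idempotent) projections, which comes down to checking $\hat\proj_\Gamma^2=\hat\proj_\Gamma$ and $\hat\proj_\Gamma\hat\proj_{\rGammaSmall}=0$ pointwise --- again a consequence of \eqref{Pi3} and nilpotence. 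Everything else is a routine application of Mikhlin's theorem and the results already proven in this section.
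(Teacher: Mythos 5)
Your approach is correct but genuinely different from the paper's. The paper proves the Hodge decomposition purely at the operator level: it uses the coercivity estimate of Proposition~\ref{prop:coer} (namely $\Norm{\nabla u}{p}\lesssim\Norm{\Pi u}{p}$ on $\overline{\range_p(\Pi)}\cap\domain_p(\Pi)$) to show that if $u=\lim\Pi y_j$ then $\Gamma y_j$ and $\rGamma y_j$ converge separately, giving $\overline{\range_p(\Pi)}\subseteq\overline{\range_p(\Gamma)}+\overline{\range_p(\rGamma)}$; then it establishes $\range_p(\Gamma)\subseteq\range_p(\Pi)$ by a chain of inclusions using $L^p=\kernel_p(\Pi)\oplus\overline{\range_p(\Pi)}$, Lemma~\ref{lem:kernelInters}, and nilpotence; directness then follows from Lemma~\ref{lem:kernelInters} once more. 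You instead build bounded Fourier-multiplier projections onto $\overline{\range_p(\Gamma)}$ and $\overline{\range_p(\rGamma)}$ directly from the symbol, which is a clean and self-contained alternative. Two remarks, though. First, your parenthetical claim that ``$\range_p(\Gamma)\subseteq\range_p(\Pi)$ is not literally true'' is mistaken --- this inclusion does hold and the paper proves exactly it; the paragraph in which you motivate abandoning that route is based on a misreading. Second, your justification for the smoothness of $\hat\proj_\Gamma(\xi)$ and $\hat\proj_{\rGammaSmall}(\xi)$ --- ``by the same Dunford--Riesz contour-integral argument as in Proposition~\ref{lem:projEst}'' --- is not quite right, since these projections are \emph{not} functions of $\hat\Pi(\xi)$ alone (they depend on how $\hat\Gamma$ and $\hat\rGamma$ sit inside $\hat\Pi$) and so cannot be produced by a resolvent contour integral around $\hat\Pi(\xi)$. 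The correct route to smoothness and the Mikhlin bounds is the product formula
\[
\hat\proj_\Gamma(\xi)=\hat\Gamma(\xi)\,\hat\Pi_R^{-1}(\xi)\,\proj_{\range(\hat\Pi(\xi))},
\qquad
\hat\proj_{\rGammaSmall}(\xi)=\hat\rGamma(\xi)\,\hat\Pi_R^{-1}(\xi)\,\proj_{\range(\hat\Pi(\xi))},
\]
where $\hat\Gamma$, $\hat\rGamma$ are linear (degree $1$) symbols and $\hat\Pi_R^{-1}\proj_{\range(\hat\Pi)}$ satisfies the degree-$(-1)$ Mikhlin estimate of Lemma~\ref{lem:DRest}; indeed the paper already introduces precisely this symbol $\Upsilon(\xi)=\hat\Gamma(\xi)\hat\Pi_R^{-1}(\xi)\proj_{\range(\hat\Pi(\xi))}$ and records its smoothness inside the proof of Lemma~\ref{lem:kernelInters}. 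The remaining algebra you sketch (idempotence, mutual annihilation, $\hat\proj_\Gamma+\hat\proj_{\rGammaSmall}=\proj_{\range(\hat\Pi)}$, $\hat\proj_\Gamma\hat\Pi=\hat\Gamma$) then goes through, and step (4) works as you describe. So the proof closes once the two small inaccuracies above are repaired; as a bonus your route bypasses the coercivity estimate $\Norm{\nabla u}{p}\lesssim\Norm{\Pi u}{p}$, replacing it with explicit bounded projections that the paper obtains only implicitly.
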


\begin{proof}
The fact that $\Pi$ is a bisectorial operator with a bounded $H^{\infty}$ functional calculus is a particular case of Theorem \ref{thm:D}.   The bisectoriality already implies the decomposition $L^p=\kernel_p(\Pi)\oplus\overline{\range_p(\Pi)}$, which we now want to refine.

We first check that $\overline{\range_p(\Pi)}\subseteq\overline{\range_p(\Gamma)}+\overline{\range_p(\rGamma)}$. If \(u\in\overline{\range_{p}(\Pi)}\), then \(u=\lim_{j\to\infty}\Pi y_j\) for some \(y_j\in\domain_{p}(\Pi)\cap\overline{\range_{p}(\Pi)}\subseteq\domain_{p}(\nabla)\). Then $\Norm{\Gamma(y_j-y_k)}{p}\lesssim\Norm{\nabla(y_j-y_k)}{p}\lesssim\Norm{\Pi(y_j-y_k)}{p}\to 0$ (using Proposition~\ref{prop:coer}), and hence \(\Gamma y_j\) converges to some \(v\in\overline{\range_{p}(\Gamma)}\) with \(\Norm{v}{p}\lesssim\Norm{u}{p}\). Similarly, \(\rGamma y_j\) converges to \(w\in\overline{\range_{p}(\rGamma)}\), and \(u=v+w\in\overline{\range_{p}(\Gamma)}+\overline{\range_{p}(\rGamma)}\).

Next we show that $\range_p(\Gamma)\subseteq\range_p(\Pi)$. Indeed,
$\range_p(\Gamma) = \Gamma(\domain_p(\Gamma)) = \Gamma (\domain_p(\Gamma)\cap \overline{\range_p(\Pi)})$ (by the decomposition in the first paragraph and Lemma~\ref{lem:kernelInters}) $\subseteq\Gamma(\domain_p(\Gamma)\cap(\overline{\range_p(\Gamma)}+\overline{\range_p(\rGamma)}))$ (by the previous paragraph) $=\Gamma(\domain_p(\Gamma)\cap\overline{\range_p(\rGamma)})$ (because $\Gamma$ is nilpotent) $=\Pi(\domain_p(\Pi)\cap\overline{\range_p(\rGamma)})$ (because $\rGamma$ is nilpotent) $\subseteq \range_p(\Pi)$.  Therefore $\overline{\range_p(\Gamma)}\subseteq\overline{\range_p(\Pi)}$. In a similar way, we see that $\overline{\range_p(\rGamma)}\subseteq\overline{\range_p(\Pi)}$.

On combining these two results with that in the preceding paragraph, we obtain
\(\overline{\range_{p}(\Gamma)}+\overline{\range_{p}(\rGamma)}=\overline{\range_{p}(\Pi)}\).
To show that the sum  is direct, observe that
\begin{equation*}
  \overline{\range_{p}(\Gamma)}\cap\overline{\range_{p}(\rGamma)}
  \subseteq\overline{\range_{p}(\Pi)}\cap\kernel_{p}(\Gamma)\cap\kernel_{p}(\rGamma)
  =\overline{\range_{p}(\Pi)}\cap\kernel_{p}(\Pi)
  =\{0\}
\end{equation*}
by nilpotence, Lemma~\ref{lem:kernelInters} and the decomposition \(L^p=\kernel_{p}(\Pi)\oplus\overline{\range_{p}(\Pi)}\).
\end{proof} 

We conclude this section with an analogue, in our matrix-valued context, of Bourgain's  \cite[Lemma~10]{Bourgain:86}.   
It is an important property of Hodge-Dirac operators with constant coefficients   which   we use to study Hodge-Dirac operators with variable coefficients  in Section \ref{sect:proofs}. 

\begin{proposition}\label{prop:transQuadEst}
For all \(z\in\R^n\), there holds
\begin{equation*}
  \Exp\BNorm{\sum_k\radem_k \tau_{2^k z} Q_{2^k} u}{p}
  \lesssim(1+\log_+\abs{z})\Norm{u}{p}.
\end{equation*} 
  where $\tau_{z}$ denotes the translation operator $\tau_{z}u(x):=  u(x-z) $. 
\end{proposition}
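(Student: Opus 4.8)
The plan is to reduce the vector-valued, matrix-symbol statement to the scalar case handled by Bourgain \cite[Lemma~10]{Bourgain:86} by decomposing the symbol of $\tau_z Q_{2^k}$ dyadically in frequency and exploiting the Mikhlin bounds from Section~\ref{sec:symbols}. First I would recall that $Q_t$ has symbol $\hat{Q}_t(\xi)=t\hat{\Pi}(\xi)(I+t^2\hat{\Pi}(\xi)^2)^{-1}$, which by Proposition~\ref{lem:resolvEst} satisfies $|\partial_\xi^\alpha\hat{Q}_t(\xi)|\lesssim_\alpha|\xi|^{-|\alpha|}$ uniformly in $t>0$, and in addition $\hat{Q}_t(\xi)$ is ``small'' both for $t|\xi|$ small and $t|\xi|$ large: indeed $|\hat{Q}_t(\xi)|\lesssim\min(t|\xi|,(t|\xi|)^{-1})$, which follows from the resolvent bounds of Lemma~\ref{lem:spect} together with $|\hat{\Pi}(\xi)|\eqsim|\xi|$ on $\range(\hat{\Pi}(\xi))$ and the fact that $\hat{Q}_t$ annihilates $\kernel(\hat{\Pi}(\xi))$. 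Choosing a standard Littlewood--Paley partition $1=\sum_{j\in\Z}\hat{\varphi}_j(\xi)$ with $\hat{\varphi}_j$ supported in $|\xi|\eqsim 2^j$, write $Q_{2^k}=\sum_{j}Q_{2^k}\varphi_j$ and note that only the index $j=-k+O(1)$, where $2^k|\xi|\eqsim 1$, carries the bulk; summing a geometric series over the remaining scales produces the operator-norm gain that will eventually absorb into the $(1+\log_+|z|)$ factor or better.

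The key steps, in order, are: (1) Insert the translation: $\tau_{2^k z}Q_{2^k}$ has symbol $e^{-i 2^k z\cdot\xi}\hat{Q}_{2^k}(\xi)$. Split into Littlewood--Paley pieces $\Delta_\ell$ (with $\hat{\Delta}_\ell$ supported where $|\xi|\eqsim 2^\ell$), so that $\tau_{2^k z}Q_{2^k}\Delta_\ell$ has symbol $e^{-i 2^k z\cdot\xi}\hat{Q}_{2^k}(\xi)\hat{\Delta}_\ell(\xi)$ supported where $|\xi|\eqsim 2^\ell$ and hence where $|2^k z\cdot\xi|\lesssim 2^{k+\ell}|z|$. (2) For the ``phase-free'' part, i.e. $2^{k+\ell}|z|\lesssim 1$, use Mikhlin on $e^{-i2^k z\cdot\xi}\hat{Q}_{2^k}(\xi)\hat{\Delta}_\ell(\xi)$ directly: the exponential contributes derivative factors bounded by powers of $2^{k+\ell}|z|\lesssim 1$, so the multiplier norm is $\lesssim|\hat{Q}_{2^k}|\text{ on }|\xi|\eqsim 2^\ell$, which is $\lesssim\min(2^{k+\ell},2^{-k-\ell})$; these pieces sum in $\ell$ absolutely and one applies the Kahane contraction principle / square function estimate in $L^p$ to get an $O(1)\Norm{u}{p}$ bound on $\Exp\Norm{\sum_k\radem_k(\cdot)}{p}$. (3) For the ``oscillatory'' part $2^{k+\ell}|z|\gtrsim 1$, rescale: the symbol at frequency scale $2^\ell$ is, after the change of variable $\xi\mapsto 2^{-\ell}\xi$, a fixed smooth compactly supported matrix-valued bump times $e^{-i 2^{k+\ell}z\cdot\xi}$, and one reduces (using a basis of $\C^N$ and boundedness of the entries in $C^n$, uniformly in $k,\ell$) to Bourgain's scalar lemma applied with the ``translation parameter'' $2^{k+\ell}z$, which has $\log_+|2^{k+\ell}z|\lesssim (k+\ell)_+ + \log_+|z|$. (4) Finally, use the smallness of $|\hat{Q}_{2^k}|$ (the $\min(2^{k+\ell},2^{-k-\ell})$ decay) to sum over $\ell$ and over the relevant range of $k$: the decay beats the at-most-logarithmic growth from Bourgain's lemma, leaving the clean bound $(1+\log_+|z|)\Norm{u}{p}$.

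The main obstacle I expect is step (3)--(4): controlling the interaction between the Rademacher sum over $k$, the dyadic sum over the frequency scale $\ell$, and the logarithmic loss in Bourgain's lemma. Bourgain's lemma gives, for a fixed scalar Mikhlin symbol $m$ and translation $a$, a bound like $\Exp\Norm{\sum_k\radem_k\tau_a (m_k\text{-piece})u}{p}\lesssim(1+\log_+|a|)\Norm{u}{p}$; but here we must first localize in frequency (destroying exact dilation structure at the endpoints), apply it with $a=2^{k+\ell}z$ depending on both summation indices, and then re-sum. The right way to organize this is probably to fix the ``offset'' $m:=k+\ell$ (so the translation parameter is $2^m z$ with $\log_+|2^m z|\lesssim m_+ +\log_+|z|$), apply Bourgain's lemma at fixed $m$ to the Rademacher sum over the diagonal, and crucially observe that $|\hat{Q}_{2^k}|\lesssim 2^{-|m|}$ off the diagonal block $m=O(1)$, so $\sum_m 2^{-|m|}(m_+ + 1 + \log_+|z|)\lesssim 1+\log_+|z|$. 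Care is also needed at the interface between the non-oscillatory regime (where we used Mikhlin directly and got $O(1)$) and the oscillatory regime; a smooth partition of unity in the product variable $2^{k+\ell}|z|$ handles the overlap. Throughout, the matrix-valued nature is handled entrywise using Propositions~\ref{lem:projEst}, \ref{lem:resolvEst} and Lemma~\ref{lem:DRest}, which give uniform $C^\infty$ control on $\hat{Q}_t(\xi)$ exactly as in the scalar case, so no genuinely new harmonic analysis is required beyond Bourgain's lemma itself.
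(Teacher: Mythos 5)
Your proposal is correct and essentially coincides with the paper's argument. Both proofs perform a dyadic Littlewood--Paley decomposition in frequency, invoke Bourgain's Lemma~10 to transfer the translations $\tau_{2^k z}$ at the cost of a factor $1+\log_+(2^m|z|)$ where $m$ is your ``offset'' $k+\ell$, exploit the quantitative decay $\abs{f(2^k\hat\Pi(\xi))}\lesssim \frac{2^k|\xi|}{1+(2^k|\xi|)^2}$ from Proposition~\ref{prop:fcPi} to gain a factor $2^{-|m|}$ on the residual randomized Fourier multiplier, and conclude by summing $\sum_m 2^{-|m|}(1+m_+ +\log_+|z|)\lesssim 1+\log_+|z|$. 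The only substantive difference is cosmetic: the paper does not introduce a separate ``phase-free'' versus ``oscillatory'' regime depending on $2^{k+\ell}|z|$, but simply applies Bourgain's lemma uniformly across all $m$ (also to the low-frequency piece $\Phi_{-k}$, where the factor is just $1+\log_+|z|$) and lets the $2^{-m}$ gain from the symbol estimate absorb the logarithm; also, Bourgain's lemma is applied directly in its vector-valued form, so there is no need to reduce to the scalar case via a basis of $\C^N$. If you drop the adaptive splitting and apply Bourgain's lemma uniformly at fixed $m$, your argument becomes identical to the paper's.
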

 
\begin{proof}
Let us fix a test function \(\varphi\in\mathscr{D}(\R^n)\) such that \(1_{\ball(0,2^{-1})}\leq\varphi\leq 1_{\ball(0,1)}\), and write \(\psi(\xi):=\varphi(\xi)-\varphi(2\xi)\), \(\psi_m(\xi):=\psi(2^{-m}\xi)\), and \(\varphi_m(\xi):=\varphi(2^{-m}\xi)\) for \(m\in\Z_+\). Let \(\Phi_m\) and \(\Psi_m\), \(m\in\Z\), be the corresponding Fourier multiplier operators with symbols \(\varphi_m\) and \(\psi_m\). Then we have the partition of unity \(\varphi_k+\sum_{m=1}^{\infty}\psi_{k+m}\equiv 1\), and the corresponding operator identity \(\Phi_k+\sum_{m=1}^{\infty}\Psi_{m+k}=I\) for all \(k\in\Z\).

Since the support of the Fourier transform of \(Q_{2^k}\Psi_{m-k}\) is contained in \(\ball(0,2^{m-k})\), by Bourgain's   \cite[Lemma~10]{Bourgain:86},   there holds
\begin{equation}\label{eq:usedBourgain}
  \Exp\BNorm{\sum_k\radem_k \tau_{2^k z} Q_{2^k} \Psi_{m-k} u}{p}
  \lesssim(1+\log_+(2^m\abs{z}))\Exp\BNorm{\sum_k\radem_k Q_{2^k} \Psi_{m-k} u}{p}.
\end{equation}
The same reasoning applies with \(\Phi\) in place of \(\Psi\).

We now estimate the right side of \eqref{eq:usedBourgain} as a Fourier multiplier transformation. The symbol of the operator acting on \(u\) is given by
\begin{equation*}
  \sigma(\xi)=\sum_k\radem_k f(2^k \hat{\Pi}(\xi))\psi(2^{k-m}\xi),\qquad
  f(\tau)=\tau(1+\tau^2)^{-1}.
\end{equation*}
For every \(\alpha\in\{0,1\}^n\), a computation shows that
\begin{equation*}
  \partial^{\alpha}\sigma(\xi)=\sum_k\radem_k \sum_{\beta\leq\alpha}
   \partial^{\beta} f(2^k \hat{\Pi}(\xi))(\partial^{\alpha-\beta}\psi)(2^{k-m}\xi)2^{(k-m)\abs{\alpha-\beta}}.
\end{equation*}
By the support property of \(\psi\), the series in \(k\) reduces to at most two non-vanishing terms for which \(2^{k-m}\abs{\xi}\eqsim 1\). By Proposition~\ref{prop:fcPi}, there moreover holds
\begin{equation}\label{eq:auxBdFc}
  \abs{\partial^{\beta} f(2^k \hat{\Pi}(\xi))}
  \lesssim\frac{2^k\abs{\xi}}{1+(2^k\abs{\xi})^2}\abs{\xi}^{-\abs{\beta}},
\end{equation}
which shows that, for \(m\in\Z_+\),
\begin{equation*}
  \abs{\partial^{\alpha}\sigma(\xi)}
  \lesssim 2^{-m}\abs{\xi}^{-\abs{\alpha}}.
\end{equation*}
Hence the associated Fourier multiplier is bounded with norm \(\lesssim 2^{-m}\).

A similar computation can be made with \(\Phi_{-k}\) in place of \(\Psi_{m-k}\), but the estimation of the symbol then involves an infinite series of terms:
\begin{equation*}\begin{split}
  \abs{\partial^{\alpha}\sigma(\xi)}
  &\lesssim\sum_{\beta\leq\alpha}\sum_k\abs{\partial^{\beta} f(2^k \hat{\Pi}(\xi))}
   \abs{(\partial^{\alpha-\beta}\varphi)(2^k\xi)}2^{k\abs{\alpha-\beta}} \\
  &\lesssim\sum_{\beta\leq\alpha}\sum_{k:|2^k\xi|\leq 1}2^{k(1+\abs{\alpha-\beta})}\abs{\xi}^{1-\abs{\beta}}
   \lesssim\abs{\xi}^{-\abs{\alpha}},
\end{split}\end{equation*}
where \eqref{eq:auxBdFc} was used again in the second estimate. We conclude that the operator acting on \(u\) in \eqref{eq:usedBourgain}, with \(\Phi_{-k}\) in place of \(\Psi_{m-k}\), is also bounded.

Collecting all the estimates, we have shown that
\begin{equation*}\begin{split}
  \Exp &\BNorm{\sum_k\radem_k \tau_{2^k z} Q_{2^k} u}{p} \\
  &\lesssim(1+\log_+\abs{z})\BNorm{\sum_k\radem_k Q_{2^k}\Phi_{-k} u}{p}
   +\sum_{m=1}^{\infty}(m+\log_+\abs{z})\BNorm{\sum_k\radem_k Q_{2^k}\Psi_{m-k} u}{p} \\
  &\lesssim\sum_{m=0}^{\infty}(\max\{1,m\}+\log_+\abs{z})2^{-m}\Norm{u}{p}
   \lesssim(1+\log_+\abs{z})\Norm{u}{p},
\end{split}\end{equation*}
which is the asserted bound.
\end{proof}

\section{Properties of Hodge decompositions}\label{sec:hodge}

In this section, we collect various results concerning the Hodge decomposition. These include duality results, a relation of the Hodge decompositions of the operator $\Pi_B$ and its variant $\underline{\Pi}_B$, and finally some stability properties of the Hodge decomposition under small perturbations of the coefficient matrices $B_1$ and $B_2$. These will be needed in proving the stability of the functional calculus of the Hodge-Dirac operators under small perturbations later on.

\begin{lemma}
\label{lem:Hodge2}
Let \(1<p<\infty\) and let \(\Pi_{B}\) be a Hodge-Dirac operator with variable coefficients in $L^p$. The following assertions are equivalent:
\begin{enumerate}
\item\label{it:Hodge} \(\Pi_{B}\) Hodge-decomposes \(L^{p}\).
\item\label{it:Hodge2}
\(
\begin{cases}
  L^p = \kernel_{p}(\Gamma) \oplus \overline{\range_{p}(\rGamma_B)},\\
  L^p = \kernel_{p}(\rGamma _B) \oplus \overline{\range_{p}(\Gamma)}.
\end{cases}
\)
\end{enumerate}
\end{lemma}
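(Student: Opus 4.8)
The plan is to prove the two implications separately, using in both directions the basic structural facts recorded before the statement: the operator-theoretic nilpotence $\overline{\range_p(\rGamma_B)}\subseteq\kernel_p(\rGamma_B)$ (and $\overline{\range_p(\Gamma)}\subseteq\kernel_p(\Gamma)$), and the fact that $\Pi_B$ is a closed, densely defined operator with $\rGamma_B^*=(\rGamma_B)^*$.

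\textbf{From \eqref{it:Hodge} to \eqref{it:Hodge2}.} Assume $L^p=\kernel_p(\Pi_B)\oplus\overline{\range_p(\Gamma)}\oplus\overline{\range_p(\rGamma_B)}$. The first point to check is $\kernel_p(\Pi_B)=\kernel_p(\Gamma)\cap\kernel_p(\rGamma_B)$; the inclusion $\supseteq$ is clear since $\Pi_B u=\Gamma u+\rGamma_B u$ on the common domain, and for $\subseteq$ I would argue that if $\Pi_B u=0$ then $\Gamma u=-\rGamma_B u\in\overline{\range_p(\Gamma)}\cap\overline{\range_p(\rGamma_B)}$, and this intersection is $\{0\}$ because the Hodge sum is direct (here one also uses nilpotence: $\overline{\range_p(\Gamma)}\cap\overline{\range_p(\rGamma_B)}\subseteq\kernel_p(\Gamma)\cap\kernel_p(\rGamma_B)$, but more simply directness already forces it to be trivial since it sits inside two complementary summands). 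Granting this, I then claim $\kernel_p(\Gamma)=(\kernel_p(\Gamma)\cap\kernel_p(\rGamma_B))\oplus\overline{\range_p(\Gamma)}$: indeed $\kernel_p(\Gamma)\supseteq\overline{\range_p(\Gamma)}$ by nilpotence, and for $u\in\kernel_p(\Gamma)$ write $u=u_0+u_\Gamma+u_{\rGammaSmall}$ in the Hodge decomposition; applying $\Gamma$ (which annihilates $u_0$ and $u_\Gamma$) gives $\Gamma u_{\rGammaSmall}=0$, so $u_{\rGammaSmall}\in\kernel_p(\Gamma)\cap\overline{\range_p(\rGamma_B)}\subseteq\kernel_p(\Gamma)\cap\kernel_p(\rGamma_B)$, whence $u_{\rGammaSmall}$ lies in $\kernel_p(\Pi_B)$; but $u_{\rGammaSmall}$ also lies in $\overline{\range_p(\rGamma_B)}$, and directness of the Hodge sum forces $u_{\rGammaSmall}=0$. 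Thus $u=u_0+u_\Gamma$ with $u_0\in\kernel_p(\Pi_B)=\kernel_p(\Gamma)\cap\kernel_p(\rGamma_B)$ and $u_\Gamma\in\overline{\range_p(\Gamma)}$, giving $\kernel_p(\Gamma)=(\kernel_p(\Gamma)\cap\kernel_p(\rGamma_B))+\overline{\range_p(\Gamma)}$ as claimed, and the sum is direct since both pieces are among the three independent Hodge summands. Adding $\overline{\range_p(\rGamma_B)}$ to both sides yields $L^p=\kernel_p(\Gamma)\oplus\overline{\range_p(\rGamma_B)}$, which is the first line of \eqref{it:Hodge2}. The second line is obtained by the symmetric argument with the roles of $\Gamma$ and $\rGamma_B$ interchanged.

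\textbf{From \eqref{it:Hodge2} to \eqref{it:Hodge}.} Conversely, assume both topological splittings in \eqref{it:Hodge2}. From the first, $L^p=\kernel_p(\Gamma)\oplus\overline{\range_p(\rGamma_B)}$; I would like to decompose $\kernel_p(\Gamma)$ further using the second splitting. Intersecting $L^p=\kernel_p(\rGamma_B)\oplus\overline{\range_p(\Gamma)}$ with $\kernel_p(\Gamma)$ and using $\overline{\range_p(\Gamma)}\subseteq\kernel_p(\Gamma)$ gives $\kernel_p(\Gamma)=(\kernel_p(\Gamma)\cap\kernel_p(\rGamma_B))\oplus\overline{\range_p(\Gamma)}$ (the modular-law step: any $u\in\kernel_p(\Gamma)$ writes as $v+w$ with $v\in\kernel_p(\rGamma_B)$, $w\in\overline{\range_p(\Gamma)}$, and then $v=u-w\in\kernel_p(\Gamma)$ automatically). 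Substituting back, $L^p=(\kernel_p(\Gamma)\cap\kernel_p(\rGamma_B))\oplus\overline{\range_p(\Gamma)}\oplus\overline{\range_p(\rGamma_B)}$, and it remains only to identify $\kernel_p(\Gamma)\cap\kernel_p(\rGamma_B)$ with $\kernel_p(\Pi_B)$. Again $\subseteq$ is immediate; for $\supseteq$, if $\Pi_B u=0$ then $\Gamma u=-\rGamma_B u$, but by the just-obtained direct decomposition $\Gamma u\in\overline{\range_p(\Gamma)}$ and $\rGamma_B u\in\overline{\range_p(\rGamma_B)}$ are independent, so both vanish. This closes the equivalence.

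\textbf{Main obstacle.} The delicate points are the domain/closedness bookkeeping: the identity $\Pi_B u=\Gamma u+\rGamma_B u$ is only available on $\domain_p(\Pi_B)=\domain_p(\Gamma)\cap\domain_p(\rGamma_B)$ (this is built into the definition of $\Pi_B$ for variable coefficients), so when I write "apply $\Gamma$ to $u$" for $u\in\kernel_p(\Gamma)$ arising as a Hodge component I must make sure that component genuinely lies in $\domain_p(\Gamma)$ — this is fine because membership in $\overline{\range_p(\rGamma_B)}$ combined with $u\in\kernel_p(\Gamma)$, i.e.\ $u\in\domain_p(\Gamma)$, is exactly what the splitting delivers, but it should be stated carefully. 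The algebraic skeleton is the modular law for the lattice of these closed subspaces, and the only genuine inputs beyond linear algebra are nilpotence and the identification of $\kernel_p(\Pi_B)$ as the intersection of the two kernels, which in turn rests on the directness of whichever three-fold sum is under consideration.
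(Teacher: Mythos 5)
Your proof is correct and mirrors the paper's own argument almost step for step: both directions rest on the identity $\kernel_p(\Pi_B)=\kernel_p(\Gamma)\cap\kernel_p(\rGamma_B)$, the nilpotence inclusions $\overline{\range_p(\Gamma)}\subseteq\kernel_p(\Gamma)$ and $\overline{\range_p(\rGamma_B)}\subseteq\kernel_p(\rGamma_B)$, and a modular-law refinement of $\kernel_p(\Gamma)$ into $(\kernel_p(\Gamma)\cap\kernel_p(\rGamma_B))\oplus\overline{\range_p(\Gamma)}$. Your remark on the domain bookkeeping is also well placed, though it resolves exactly as you note.
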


\begin{proof}
  \(\eqref{it:Hodge}\Rightarrow\eqref{it:Hodge2}\).  
We first show that $\kernel_p(\Pi_{B}) = \kernel_p(\Gamma) \cap \kernel_p(\rGamma_{B})$.
If $u \in \kernel_p(\Pi_{B})$
then $\Gamma u = - \rGamma_{B}u \in \overline{\range_p(\Gamma)} \cap \overline{\range_p(\rGamma_{B})} = \{0\}$.
It follows that $\kernel_p(\Pi_{B}) \oplus \overline{\range_p(\Gamma)} \subseteq \kernel_p(\Gamma)$.
Also $\kernel_p(\Gamma) \cap \overline{\range_p(\rGamma_{B})}
 \subseteq \kernel_p(\Pi_{B})\cap  \overline{\range_p(\rGamma_{B})}  = \{0\}$.
Hence $\kernel_p(\Pi_{B}) \oplus \overline{\range_p(\Gamma)} = \kernel_p(\Gamma)$,
and thus $L^{p} = \kernel_p(\Gamma)\oplus \overline{\range_p(\rGamma_{B})}$.
  The second part of \eqref{it:Hodge2} is similarly proven.  

\(\eqref{it:Hodge2}\Rightarrow\eqref{it:Hodge}\). By \eqref{it:Hodge2}, $u\in L^p$ can be decomposed as \(u = v_0 + v_1 + u_1\) where \(v_0+v_1 \in \kernel_{p}(\Gamma)\), \(v_0 \in \kernel_{p}(\rGamma_B)\), 
\(v_1 \in \overline{\range_{p}(\Gamma)}\), and  \(u_1 \in \overline{\range_{p}(\rGamma_B)}\).
Then \(\Pi_B v_0 = \Gamma (v_0+v_1-v_1) = 0\),  and \(\Norm{v_0}{p}+\Norm{v_1}{p} \lesssim \Norm{v_0+v_1}{p} \lesssim \Norm{u}{p}\).
Moreover
\begin{equation*}\begin{split}
  &\kernel_{p}(\Pi_B) \cap \overline{\range_{p}(\Gamma)} \subseteq \kernel_{p}(\rGamma_B) \cap \overline{\range_{p}(\Gamma)}=\{0\},\\
  &\kernel_{p}(\Pi_B) \cap \overline{\range_{p}(\rGamma_B)} \subseteq \kernel_{p}(\Gamma) \cap \overline{\range_{p}(\rGamma_B)}=\{0\},\\
  &\overline{\range_{p}(\rGamma_B)}\cap \overline{\range_{p}(\Gamma)} \subseteq \kernel_{p}(\Gamma) \cap \overline{\range_{p}(\rGamma_B)}=\{0\}.
\end{split}\end{equation*}
The proof is complete.
\end{proof}

\begin{lemma}\label{lem:HodgeDuality}
Let \(D_0\) and \(D_1\) be closed, densely defined operators in \(L^p\). Then
\begin{equation*}
  L^p=\kernel_{p}(D_0)\oplus\overline{\range_{p}(D_1)}\qquad\text{if and only if}\qquad
  L^{p'}=\kernel_{p'}(D_1^*)\oplus\overline{\range_{p'}(D_0^*)}.
\end{equation*}
\end{lemma}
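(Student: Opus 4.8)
The statement is a duality principle for topological direct sum decompositions, so the natural strategy is to translate the direct sum decomposition into a statement about ranges and kernels, take annihilators, and use the standard Banach space duality relations
\[
  \kernel_{p'}(D^*) = \range_p(D)^{\perp}, \qquad
  \overline{\range_{p'}(D^*)} = {}^{\perp}\kernel_p(D),
\]
valid for any closed, densely defined operator $D$ on $L^p$ (here $\perp$ denotes the annihilator inside $L^{p'}$, and ${}^{\perp}$ the pre-annihilator inside $L^{p'}$; these come from the closed range considerations and the identity $\overline{\range_p(D)} = {}^{\perp}\kernel_{p'}(D^*)$). The key observation is that a closed subspace decomposition $L^p = Y_0 \oplus Y_1$ with both $Y_i$ closed is equivalent, by taking annihilators, to $L^{p'} = Y_0^{\perp} \oplus Y_1^{\perp}$: indeed the bounded projection $P$ onto $Y_0$ along $Y_1$ dualizes to a bounded projection $P^*$ on $L^{p'}$, and one checks $\range(P^*) = \kernel(P)^{\perp} = Y_1^{\perp}$ and $\kernel(P^*) = \range(P)^{\perp} = Y_0^{\perp}$, giving $L^{p'} = Y_1^{\perp} \oplus Y_0^{\perp}$.

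\textbf{Execution.} First I would record the elementary lemma just stated: for closed subspaces $Y_0, Y_1$ of a reflexive Banach space $X$, one has $X = Y_0 \oplus Y_1$ (topological direct sum) if and only if $X^* = Y_0^{\perp} \oplus Y_1^{\perp}$. This is proven by dualizing the projection as above, and the converse follows by reflexivity (or by symmetry, applying the forward direction to $X^*$ and using $(Y^{\perp})^{\perp} = Y$ for closed $Y$). Then I would apply this with $X = L^p$, $Y_0 = \kernel_p(D_0)$, $Y_1 = \overline{\range_p(D_1)}$. It remains only to identify the annihilators: $\kernel_p(D_0)^{\perp} = \overline{\range_{p'}(D_0^*)}$ and $\big(\overline{\range_p(D_1)}\big)^{\perp} = \range_p(D_1)^{\perp} = \kernel_{p'}(D_1^*)$. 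Substituting these into $L^{p'} = Y_0^{\perp} \oplus Y_1^{\perp}$ — and noting the order of the summands is immaterial — yields exactly $L^{p'} = \kernel_{p'}(D_1^*) \oplus \overline{\range_{p'}(D_0^*)}$. The converse direction of the Lemma is the same argument applied to $D_0^*, D_1^*$, using $D_i^{**} = D_i$ (valid since $L^p$ is reflexive and $D_i$ closed densely defined).

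\textbf{Main obstacle.} The only slightly delicate point is making sure the two standard duality facts $\kernel_{p'}(D^*) = \range_p(D)^{\perp}$ and $\kernel_p(D)^{\perp} = \overline{\range_{p'}(D^*)}$ are invoked correctly — the first is immediate from the definition of the adjoint, while the second requires that $\range_{p'}(D^*)$ be dense in $\kernel_p(D)^{\perp}$, which is the statement that $\overline{\range_p(D)}$ is exactly the pre-annihilator of $\kernel_{p'}(D^*)$; this is a routine consequence of the Hahn--Banach theorem for closed densely defined operators. No closed-range hypothesis is needed because we only ever take \emph{closures} of ranges. I expect this to be a short proof, with the subspace duality lemma doing essentially all the work.
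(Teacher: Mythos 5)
Your proof is correct and follows essentially the same route as the paper: dualize the topological decomposition to get $L^{p'} = \overline{\range_p(D_1)}^{\perp} \oplus \kernel_p(D_0)^{\perp}$, identify these annihilators as $\kernel_{p'}(D_1^*)$ and $\overline{\range_{p'}(D_0^*)}$, and obtain the converse by symmetry (reflexivity). The paper states this in one line; your write-up fleshes out the same steps.
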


\begin{proof}
Assuming the first decomposition, we have that
$$
 L^{p'} = \overline{\range_{p}(D_{1})}^{\perp} \oplus \kernel_{p}(D_0)^{\perp} = \kernel_{p'}(D_1^{*}) \oplus  \overline{\range_{p'}(D_{0}^{*})}.
$$
The other implication follows by symmetry.
\end{proof}

\begin{lemma}\label{lem:dual}
Let $\Pi_{B}$ be a Hodge-Dirac operator with variable coefficients in $L^p$ which Hodge decomposes $L^p$. Then $\Pi_{B}^{*}$ is a Hodge-Dirac operator with variable coefficients in $L^{p'}$ which Hodge decomposes $L^{p'}$.
\end{lemma}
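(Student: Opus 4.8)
The plan is to verify the three defining properties of a Hodge-Dirac operator with variable coefficients for $\Pi_B^*$, and then deduce the Hodge decomposition of $L^{p'}$ from that of $L^p$ by duality. First I would identify the pieces: since $\Pi_B = \Gamma + \rGamma_B$ with $\rGamma_B = B_1 \rGamma B_2$, taking adjoints gives $\Pi_B^* = \Gamma^* + \rGamma_B^* = \Gamma^* + B_2^* \rGamma^* B_1^*$. So the candidate decomposition is $\Pi_B^* = \tilde\Gamma + \tilde{\rGamma}_{\tilde B}$ with $\tilde\Gamma := \Gamma^*$, $\tilde{\rGamma} := \rGamma^*$, $\tilde B_1 := B_2^*$, $\tilde B_2 := B_1^*$. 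Here $\Gamma^*$ and $\rGamma^*$ are again nilpotent homogeneous first-order constant-coefficient Fourier multipliers (their symbols are $\hat\Gamma(\xi)^*$ and $\hat\rGamma(\xi)^*$, which square to zero since $\hat\Gamma(\xi)^2 = 0$), so $\tilde\Pi := \Gamma^* + \rGamma^*$ is a Hodge-Dirac operator with constant coefficients provided we check conditions $(\Pi 1)$--$(\Pi 3)$ for its symbol $\hat\Pi(\xi)^*$ — but these follow because taking adjoints of matrices preserves the coercivity-on-the-range bound, the spectral bisector inclusion, and the kernel-intersection identity (using $\kernel(T^*) = \range(T)^\perp$ and standard linear algebra, together with $\C^N = \kernel(\hat\Pi(\xi)) \oplus \range(\hat\Pi(\xi))$ from Lemma~\ref{lem:spect}).

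Next I would check the two structural conditions. For the nilpotence condition \eqref{nilpcond} applied to $\Pi_B^*$, one needs $\rGamma^* B_1^* B_2^* \rGamma^* = 0$ on the Schwartz class; this is the adjoint identity of $\rGamma B_2 B_1 \rGamma = 0$, which holds by hypothesis \eqref{nilpcond} for $\Pi_B$ (the self-improvement to $\domain_p(\rGamma)$ noted after Definition~\ref{def:pert}, combined with density, makes the adjoint manipulation legitimate). For the coercivity condition \eqref{coercond} applied to $\Pi_B^*$, one needs $\Norm{u}{p'} \lesssim \Norm{B_2^* u}{p'}$ for $u \in \range_{p'}(\rGamma^*)$ and $\Norm{v}{p} \lesssim \Norm{(B_1^*)^* v}{p} = \Norm{B_1 v}{p}$ for $v \in \range_p((\rGamma^*)^*) = \range_p(\rGamma)$ — but both of these are literally the two halves of \eqref{coercond} for $\Pi_B$ read in the opposite order. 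So all three conditions transfer, and $\Pi_B^*$ is a Hodge-Dirac operator with variable coefficients in $L^{p'}$.

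Finally, to get the Hodge decomposition of $L^{p'}$ I would invoke Lemma~\ref{lem:Hodge2}: $\Pi_B$ Hodge-decomposing $L^p$ is equivalent to the two splittings $L^p = \kernel_p(\Gamma) \oplus \overline{\range_p(\rGamma_B)}$ and $L^p = \kernel_p(\rGamma_B) \oplus \overline{\range_p(\Gamma)}$. Applying Lemma~\ref{lem:HodgeDuality} to each (with $(D_0, D_1) = (\Gamma, \rGamma_B)$ and $(D_0, D_1) = (\rGamma_B, \Gamma)$ respectively) yields $L^{p'} = \kernel_{p'}(\rGamma_B^*) \oplus \overline{\range_{p'}(\Gamma^*)}$ and $L^{p'} = \kernel_{p'}(\Gamma^*) \oplus \overline{\range_{p'}(\rGamma_B^*)}$, which is exactly condition \eqref{it:Hodge2} of Lemma~\ref{lem:Hodge2} for the operator $\Pi_B^* = \Gamma^* + \rGamma_B^*$. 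Hence $\Pi_B^*$ Hodge-decomposes $L^{p'}$.

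I expect the main obstacle to be purely bookkeeping: carefully matching up which of $B_1, B_2$ becomes $\tilde B_1$ versus $\tilde B_2$ after taking adjoints, and checking that the coercivity conditions line up in the correct order — a sign-swap or index-swap error here would be easy to make. The genuinely mathematical content (nilpotence of $\hat\Gamma(\xi)^*$, the symbol conditions for $\hat\Pi(\xi)^*$, and the two duality lemmas) is either immediate linear algebra or already available in the excerpt, so no deep new idea is needed.
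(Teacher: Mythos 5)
Your proof is correct, and for the Hodge-decomposition part it runs exactly as in the paper: Lemma~\ref{lem:Hodge2} translates Hodge-decomposition into the two splittings $L^p=\kernel_p(\Gamma)\oplus\overline{\range_p(\rGamma_B)}$, $L^p=\kernel_p(\rGamma_B)\oplus\overline{\range_p(\Gamma)}$, to which Lemma~\ref{lem:HodgeDuality} is applied and then Lemma~\ref{lem:Hodge2} reassembles the dual decomposition.

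Where you diverge from the paper is in verifying the coercivity condition~\eqref{coercond} for $\Pi_B^*$. You observe that, after the identification $\tilde B_1=B_2^*$, $\tilde B_2=B_1^*$, $\tilde\rGamma=\rGamma^*$, the required estimates are
\[
\Norm{u}{p'}\lesssim\Norm{B_2^*u}{p'}\ \text{for}\ u\in\range_{p'}(\rGamma^*),
\qquad
\Norm{v}{p}\lesssim\Norm{B_1 v}{p}\ \text{for}\ v\in\range_{p}(\rGamma)
\]
(using $(\rGamma^*)^*=\rGamma$ by reflexivity), which are \emph{verbatim} the two halves of~\eqref{coercond} for $\Pi_B$. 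This is cleaner than the paper's route, which detours through the isomorphism $B_1:\overline{\range_p(\rGamma)}\to\overline{\range_p(\rGamma_B)}\cong L^p/\kernel_p(\Gamma)$, dualizes it to an isomorphism $B_1^*:\overline{\range_{p'}(\Gamma^*)}\to L^{p'}/\kernel_{p'}(\rGamma^*)$, and extracts coercivity from there; notably, the paper's route also uses the Hodge-decomposition hypothesis for this step, whereas your argument does not need it for~\eqref{coercond} at all (only for the final dual Hodge decomposition). Your approach thus better isolates which hypothesis is used where. You also make explicit a point the paper leaves silent: that the constant-coefficient operator $\Gamma^*+\rGamma^*$ must itself satisfy \eqref{Pi1}--\eqref{Pi3}. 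Your sketch of that is adequate; the only place where ``standard linear algebra'' is compressing a real step is \eqref{Pi3}, which at the symbol level needs $\range(\hat\Pi(\xi))=\range(\hat\Gamma(\xi))+\range(\hat\rGamma(\xi))$ — this is proved by the same nilpotence argument as in Theorem~\ref{thm:unpertAgain} but is worth stating, since $\kernel(T^*)=\range(T)^\perp$ alone does not give it. Finally, both you and the paper implicitly take for granted the operator identity $(\Pi_B)^*=\Gamma^*+\rGamma_B^*$ with domain $\domain_{p'}(\Gamma^*)\cap\domain_{p'}(\rGamma_B^*)$ (rather than merely $\supseteq$); since the paper does not address it either, this is not a gap specific to your argument.
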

\begin{proof}
We have to show that $B^{*}=(B_{1}^{*},B_{2}^{*})$ satisfy (\ref{coercond}) in $L^{p'}$. 
Let us remark that $B_1$ is an isomorphism from $\overline{\range_p(\rGamma)}$ onto $\overline{\range_p(\rGamma_{B})}$. By Lemma \ref{lem:Hodge2}, this means that $B_1$ is an isomorphism from $\overline{\range_p(\rGamma)}$ onto $L^{p}/\kernel_p(\Gamma)$, and thus 
$B_1^{*}$ is an isomorphism from $\range_{p'}(\Gamma^{*})$ onto $L^{p'}/\kernel_{p'}(\rGamma^{*})$. This gives the part of the result concerning $B_{1}^{*}$ 
thanks to Lemma \ref{lem:HodgeDuality}. The case of $B_{2}^{*}$ is handled in the same way. Condition (\ref{nilpcond}) is obtained by duality, and the proof is concluded by applying Lemma 
\ref{lem:Hodge2} and Lemma \ref{lem:HodgeDuality}.
\end{proof}

Recall that $\underline{\Pi}_B:=\rGamma+B_2\Gamma B_1$.

\begin{lemma}\label{lem:tilde}  
Let \(1<p<\infty\) and suppose   that   \(\Pi_{B}\) and \(\underline{\Pi}_{B}\) are both Hodge-Dirac operator with variable coefficients in $L^p$, and that \(\Pi_{B}\) Hodge-decomposes \(L^{p}\). Then  \(\underline{\Pi}_{B}\) also Hodge-decomposes \(L^{p}\). If, moreover,   \(\Pi_{B}\) is an R-bisectorial operator   in   $L^p$, then so is \(\underline{\Pi}_{B}\).   
\end{lemma}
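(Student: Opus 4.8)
The plan is to exploit the symmetry between $\Pi_B = \Gamma + B_1\rGamma B_2$ and $\underline{\Pi}_B = \rGamma + B_2\Gamma B_1$, which differ only by swapping the roles of $\Gamma\leftrightarrow\rGamma$ and $B_1\leftrightarrow B_2$. I would first record the elementary observation that $\underline{\Pi}_B$ is conjugate, in a suitable sense, to a Hodge-Dirac operator of the same type as $\Pi_B$; more precisely, I expect $B_1^{-1}$-type conjugation (on the appropriate range space) to intertwine $\underline{\Pi}_B$ with $B_1 \rGamma B_2 + \Gamma$ up to reordering, so that the Hodge decomposition for one can be transported to the other. The cleanest route, though, is probably to use Lemma~\ref{lem:Hodge2}: by that lemma, $\Pi_B$ Hodge-decomposes $L^p$ if and only if both
\begin{equation*}
  L^p = \kernel_p(\Gamma)\oplus\overline{\range_p(\rGamma_B)}\quad\text{and}\quad
  L^p = \kernel_p(\rGamma_B)\oplus\overline{\range_p(\Gamma)},
\end{equation*}
and the analogous statement for $\underline{\Pi}_B$ requires
\begin{equation*}
  L^p = \kernel_p(\rGamma)\oplus\overline{\range_p(\Gamma_{\underline B})}\quad\text{and}\quad
  L^p = \kernel_p(\Gamma_{\underline B})\oplus\overline{\range_p(\rGamma)},
\end{equation*}
where $\Gamma_{\underline B} := B_2\Gamma B_1$. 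So the task reduces to deriving these two splittings from the two for $\Pi_B$, using the coercivity hypotheses (B2) for both $\Pi_B$ and $\underline{\Pi}_B$.

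The key algebraic input is that the coercivity conditions make $B_1:\overline{\range_p(\rGamma)}\to\overline{\range_p(\rGamma_B)}$ and $B_2^*:\overline{\range_{p'}(\rGamma^*)}\to\overline{\range_{p'}(\rGamma_B^*)}$ isomorphisms (and similarly for $\underline{\Pi}_B$ with $B_2,B_1^*$). Combined with the identities from Lemma~\ref{lem:Hodge2} — e.g. $\overline{\range_p(\rGamma_B)}$ is a complement of $\kernel_p(\Gamma)$, hence isomorphic to $L^p/\kernel_p(\Gamma)$ — one gets, by the same duality manipulation as in the proof of Lemma~\ref{lem:dual}, that $\Gamma^*$ and $\rGamma^*$ are in the correct ``Hodge position'' in $L^{p'}$, and then that $\Gamma$ and $\rGamma$ satisfy the splittings needed for $\underline{\Pi}_B$. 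Concretely, I would: (i) deduce from $\Pi_B$ Hodge-decomposing $L^p$ that $\Pi_B^*$ Hodge-decomposes $L^{p'}$ (Lemma~\ref{lem:dual}); (ii) unwind what this gives about $\kernel_{p'}(\Gamma^*)$, $\kernel_{p'}(\rGamma^*)$, $\overline{\range_{p'}(\Gamma^*)}$, $\overline{\range_{p'}(\rGamma^*)}$ and their complements; (iii) feed in the coercivity of $B_1^*, B_2^*$ (valid since both $\Pi_B$ and $\underline{\Pi}_B$ are Hodge-Dirac operators with variable coefficients) to identify these with the subspaces needed for $\underline{\Pi}_B^* = \rGamma^* + B_1^*\Gamma^* B_2^*$; (iv) apply Lemma~\ref{lem:HodgeDuality} and Lemma~\ref{lem:Hodge2} once more to pass back to $L^p$ and conclude that $\underline{\Pi}_B$ Hodge-decomposes $L^p$.

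For the R-bisectoriality assertion, I would use the characterization (noted after the definition of R-bisectoriality) that $\underline{\Pi}_B$ is R-bisectorial iff $\{(I+it\underline{\Pi}_B)^{-1}:t\in\R\}$ is R-bounded, and relate the resolvents of $\underline{\Pi}_B$ to those of $\Pi_B$ through the Hodge decomposition. On each Hodge summand the operators $\Pi_B$ and $\underline{\Pi}_B$ can be expressed in terms of one another by conjugation with the bounded-with-bounded-inverse isomorphisms $B_1$ (between $\overline{\range_p(\rGamma)}$ and $\overline{\range_p(\rGamma_B)}$) and the identity on $\overline{\range_p(\Gamma)}$; since R-boundedness is stable under similarity by a fixed bounded invertible operator and under finite sums corresponding to the (finitely many) Hodge blocks, R-bisectoriality transfers. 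The main obstacle, I expect, is bookkeeping: keeping straight which coercivity estimate is needed at each step (those for $\Pi_B$ versus those for $\underline{\Pi}_B$ are genuinely both used), and verifying that the conjugating maps really do intertwine the resolvents rather than merely the operators on a core — one must check domains. This is routine but error-prone, so I would organize it as a short sequence of lemmas on the isomorphisms before assembling the resolvent comparison.
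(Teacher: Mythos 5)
Your outline of the Hodge-decomposition part is correct in spirit, and the reduction via Lemma~\ref{lem:Hodge2} and Lemma~\ref{lem:HodgeDuality} is exactly what the paper uses; the only difference is that the paper constructs the new splitting directly (given $u\in L^p$, decompose $B_1u=v+w$ with $v\in\kernel_p(\Gamma)$, $w\in\overline{\range_p(\rGamma_B)}$, write $w=B_1x$ and observe $u-x\in\kernel_p(B_2\Gamma B_1)$) rather than passing through Lemma~\ref{lem:dual} and back. Your detour is valid but longer.

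The R-bisectoriality part, however, contains a genuine error in the proposed conjugating map. You propose a similarity acting as $B_1$ on $\overline{\range_p(\rGamma)}$ and as ``the identity on $\overline{\range_p(\Gamma)}$.'' But $\overline{\range_p(\Gamma)}$ is not a Hodge summand for $\underline{\Pi}_B$; the summands are $\overline{\range_p(\rGamma)}$ and $\overline{\range_p(B_2\Gamma B_1)}$. Since $\underline{\Pi}_B$ sends $\overline{\range_p(\rGamma)}$ into $\overline{\range_p(B_2\Gamma B_1)}$ and $\Pi_B$ sends $\overline{\range_p(\rGamma_B)}$ into $\overline{\range_p(\Gamma)}$, the only way a similarity $S$ with $S\underline{\Pi}_B=\Pi_B S$ can act is by $B_1$ on $\overline{\range_p(\rGamma)}$ \emph{and} $B_2^{-1}$ on $\overline{\range_p(B_2\Gamma B_1)}$ (using the coercivity of $B_2$ for $\underline{\Pi}_B$); the identity does not even have the right domain. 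Moreover, a single global similarity of $L^p$ would also require an isomorphism $\kernel_p(\underline{\Pi}_B)\to\kernel_p(\Pi_B)$, which is not supplied by the hypotheses, so one must treat the kernel separately. The paper avoids these issues altogether: it does not conjugate $\underline{\Pi}_B$ to $\Pi_B$, but rather splits the resolvent into its even and odd parts $t^2\underline{\Pi}_B^2(I+t^2\underline{\Pi}_B^2)^{-1}$ and $it\underline{\Pi}_B(I+t^2\underline{\Pi}_B^2)^{-1}$, restricts each to the two range summands, and uses the intertwining identities $\Gamma B_1(I+t^2\underline{\Pi}_B^2)^{-1}\underline{\proj}_1=\Gamma(I+t^2\Pi_B^2)^{-1}B_1\underline{\proj}_1$ and $\rGamma(I+t^2\underline{\Pi}_B^2)^{-1}B_2\proj_1=\rGamma B_2(I+t^2\Pi_B^2)^{-1}\proj_1$ to reduce each piece to an R-bound already known for $\Pi_B$. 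You should either correct your conjugating map to $B_1\oplus B_2^{-1}$ and check it really intertwines the resolvents on the range part (handling the kernel separately), or switch to the resolvent-splitting scheme actually used in the paper.
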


\begin{proof}   
By Lemmas~\ref{lem:Hodge2} and \ref{lem:HodgeDuality}, the assumption   that \(\Pi_{B}\) Hodge-decomposes 
\(L^{p}\)   is equivalent to
\begin{equation}\label{eq:HodgeAss}
  L^p=\kernel_{p}(\Gamma) \oplus \overline{\range_{p}(\rGamma_B)},\qquad
  L^{p'}=\kernel_{p'}(\Gamma^*) \oplus \overline{\range_{p'}(\rGamma_B^*)},
\end{equation}
whereas the claim   that \(\underline{\Pi}_{B}\) Hodge-decomposes 
\(L^{p}\)   is equivalent to
\begin{equation}\label{eq:HodgeClaim}
  L^p=\kernel_{p}(B_2\Gamma B_1) \oplus \overline{\range_{p}(\rGamma)},\qquad
  L^{p'}=\kernel_{p'}(B_1^*\Gamma^* B_2^*) \oplus \overline{\range_{p'}(\rGamma^*)}.
\end{equation}

We show that the first decomposition in \eqref{eq:HodgeAss} implies the first one in \eqref{eq:HodgeClaim}.
Let \(u \in L^p\) and write \(B_1u = v + w\) where \(v \in \kernel_{p}(\Gamma)\) and \(w \in \overline{\range_{p}(\rGamma_B)}\).
Let \(w = B_1 x\) for \(x \in  \overline{\range_{p}(\rGamma)}\). Then \(u-x \in \kernel_{p}(B_2\Gamma B_1)\) and
\begin{equation*}
  \Norm{x}{p} \lesssim \Norm{B_1x}{p} = \Norm{w}{p} \lesssim \Norm{B_1 u}{p} \lesssim \Norm{u}{p}.
\end{equation*}
The deduction of the second decomposition in \eqref{eq:HodgeClaim} from the second one in \eqref{eq:HodgeAss} is analogous.

We now turn to the second statement. Let us denote by $\underline{\mathbb{P}}_{1}$ the projection on $\overline{\range_p(\rGamma)}$, by $\underline{\mathbb{P}}_{2}$ the projection on $\overline{\range_p(B_{2}\Gamma B_{1})}$, by $\mathbb{P}_{1}$ the projection on $\overline{\range_p(\Gamma)}$, and by $\mathbb{P}_{2}$ the projection on $\overline{\range_p(B_{1}\rGamma B_{2})}$. 
Let $(t_{k})_{k\in \N} \subset \R$ and $(u_{k})_{k \in \N} \subset L^{p}$, and remark first that
\begin{equation*}
  (I+it_{k}\underline{\Pi}_{B})^{-1} = I - (t_{k}\underline{\Pi}_{B})^{2}(I+(t_{k}\underline{\Pi}_{B})^{2})^{-1} -   
    it_{k}\underline{\Pi}_{B}(I+(t_{k}\underline{\Pi}_{B})^{2})^{-1}.
\end{equation*}
The R-bisectoriality of $\underline{\Pi}_{B}$ will thus follow once we have proven that
\begin{equation}
\label{eq:halfres}
  \mathbb{E}\BNorm{\sum_k \radem_{k} it_{k}\underline{\Pi}_{B}(I+(t_{k}\underline{\Pi}_{B})^{2})^{-1}u_{k}}{p}
  \lesssim \mathbb{E}\BNorm{\sum_k \radem_{k}u_{k}}{p} \qquad\text{and}
\end{equation}
\begin{equation}
\label{eq:otherhalfres}
  \mathbb{E}\BNorm{\sum_k \radem_{k} (t_{k}\underline{\Pi}_{B})^{2}(I+(t_{k}\underline{\Pi}_{B})^{2})^{-1}u_{k}}{p}
  \lesssim \mathbb{E}\BNorm{\sum_k \radem_{k}u_{k}}{p}. \phantom{\qquad\text{and}}
\end{equation}

To do so we note that, since $\Pi_{B}$ and $\underline{\Pi}_{B}$ are Hodge-Dirac operators with variable coefficients, we have:
\begin{equation*}
\begin{split}
\Gamma B_{1} (I+(t_{k}\underline{\Pi}_{B})^{2})^{-1}\underline{\mathbb{P}}_{1} &= 
\Gamma  (I+(t_{k}\Pi_{B})^{2})^{-1}B_{1}\underline{\mathbb{P}}_{1},\\
\rGamma (I+(t_{k}\underline{\Pi}_{B})^{2})^{-1} B_{2} \mathbb{P}_{1} &= 
\rGamma B_{2}  (I+(t_{k}\Pi_{B})^{2})^{-1}\mathbb{P}_{1}.
\end{split}
\end{equation*}
We can now proceed with the estimates, using the R-bisectoriality of $\Pi_{B}$.
\begin{equation*}
\begin{split}
    \mathbb{E}\BNorm{\sum_k \radem_{k}it_{k}\underline{\Pi}_{B}
       (I+(t_{k}\underline{\Pi}_{B})^{2})^{-1}\underline{\mathbb{P}}_{1}u_{k}}{p} 
& = \mathbb{E}\BNorm{\sum_k \radem_{k}it_{k}B_{2}\Gamma B_{1}
        (I+(t_{k}\underline{\Pi}_{B})^{2})^{-1}\underline{\mathbb{P}}_{1}u_{k}}{p}, \\
& = \mathbb{E}\BNorm{\sum_k \radem_{k}it_{k}B_{2}\Gamma
        (I+(t_{k}\Pi_{B})^{2})^{-1}B_{1}\underline{\mathbb{P}}_{1}u_{k}}{p}, \\
& = \mathbb{E}\BNorm{\sum_k \radem_{k}it_{k}B_{2}\Pi_{B}
        (I+(t_{k}\Pi_{B})^{2})^{-1}B_{1}\underline{\mathbb{P}}_{1}u_{k}}{p}, \\
& \lesssim \mathbb{E}\BNorm{\sum_k \radem_{k}B_{1}\underline{\mathbb{P}}_{1}u_{k}}{p}
  \lesssim \mathbb{E}\BNorm{\sum_k \radem_{k}u_{k}}{p}. 
 \end{split}
\end{equation*}
Introducing $v_{k}$ such that $B_{2}\mathbb{P}_{1}v_{k} = \underline{\mathbb{P}}_{2}u_{k}$, we also get:
\begin{equation*}
\begin{split}
    \mathbb{E}\BNorm{\sum_k \radem_{k}it_{k}\underline{\Pi}_{B}
        (I+(t_{k}\underline{\Pi}_{B})^{2})^{-1}\underline{\mathbb{P}}_{2}u_{k}}{p} 
& = \mathbb{E}\BNorm{\sum_k \radem_{k}it_{k}\rGamma
        (I+(t_{k}\underline{\Pi}_{B})^{2})^{-1}B_{2}\mathbb{P}_{1}v_{k}}{p}, \\
& = \mathbb{E}\BNorm{\sum_k \radem_{k}it_{k}\rGamma B_{2}
        (I+(t_{k}\Pi_{B})^{2})^{-1}\mathbb{P}_{1}v_{k}}{p}, \\
& \lesssim \mathbb{E}\BNorm{\sum_k \radem_{k}it_{k}\Pi_{B}
        (I+(t_{k}\Pi_{B})^{2})^{-1}\mathbb{P}_{1}v_{k}}{p}, \\
&\lesssim \mathbb{E}\BNorm{\sum_k \radem_{k}\mathbb{P}_{1}v_{k}}{p}
 \lesssim  \mathbb{E}\BNorm{\sum_k \radem_{k}u_{k}}{p}. 
 \end{split}
\end{equation*}
The estimate (\ref{eq:otherhalfres}) is proven in the same way. 
\end{proof}

 
\begin{lemma}\label{lem:pertSplit}
 
If a Banach space splits as \(X=X_0\oplus X_1=\proj_{0}X\oplus \proj_{1}X\), then there is \(\delta>0\) such that for all \(T\in\bddlin(X_{1},X)\) with \(\Norm{T}{}<\delta\), it also splits as \(X=X_0\oplus (I-T)X_1 = \tilde\proj_{0}X\oplus \tilde\proj_{1}X\), with   $\Norm{\tilde\proj_{0}-\proj_{0}}{}+\Norm{\tilde\proj_{1}-\proj_{1}}{} \lesssim \delta.$  
\end{lemma}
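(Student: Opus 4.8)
The plan is to construct $\tilde\proj_0$ and $\tilde\proj_1$ explicitly from $\proj_0,\proj_1$ and $T$ by a Neumann series argument. The key observation is that once we know that $X = X_0 \oplus (I-T)X_1$ holds as a topological direct sum, the projection $\tilde\proj_1$ onto $(I-T)X_1$ along $X_0$ can be written down in closed form. Indeed, consider the operator $S := I - T\proj_1 \in \bddlin(X)$ (note $T\proj_1$ makes sense since $\proj_1 X = X_1$ is the domain of $T$). For $\Norm{T}{} < \delta \leq \Norm{\proj_1}{}^{-1}$, $S$ is invertible on $X$ with $\Norm{S^{-1} - I}{} \lesssim \delta$ by the Neumann series. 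I claim $\tilde\proj_0 := \proj_0 S^{-1}$ and $\tilde\proj_1 := (I-T\proj_1)\proj_1 S^{-1} = S^{-1} - \proj_0 S^{-1}$ do the job; equivalently $\tilde\proj_1 = (I-T)\proj_1 S^{-1}$.

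The steps are then as follows. First, verify $S = I - T\proj_1$ maps $X$ bijectively onto $X$: on $X_0$ it acts as the identity (since $\proj_1 X_0 = 0$), and on $X_1$ it acts as $I - T$ mapping into $X_0 \oplus (I-T)X_1$; invertibility on all of $X$ follows from the Neumann series since $\Norm{T\proj_1}{} \leq \Norm{T}{}\Norm{\proj_1}{} < 1$ for $\delta$ small. Moreover $S X_1 = (I-T)X_1$ and $SX_0 = X_0$, so $S$ maps the (algebraic) decomposition $X = X_0 \oplus X_1$ onto $X = X_0 \oplus (I-T)X_1$, which establishes that the latter is indeed a topological direct sum. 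Second, check the projection identities: $\tilde\proj_0 + \tilde\proj_1 = S^{-1} \cdot$ — wait, rather one computes directly that $\tilde\proj_0 := S \proj_0 S^{-1}$ and $\tilde\proj_1 := S\proj_1 S^{-1}$ are complementary idempotents (conjugates of idempotents are idempotents, and they sum to $I$), with ranges $S\proj_0 X = SX_0 = X_0$ and $S\proj_1 X = SX_1 = (I-T)X_1$ respectively. So the correct formulas are $\tilde\proj_j := S\proj_j S^{-1}$, $j=0,1$. Third, the norm estimate: writing $\tilde\proj_j - \proj_j = (S-I)\proj_j S^{-1} + \proj_j(S^{-1} - I)$ and using $\Norm{S - I}{} = \Norm{T\proj_1}{} \lesssim \delta$, $\Norm{S^{-1} - I}{} \lesssim \delta$, $\Norm{S^{-1}}{} \lesssim 1$, one gets $\Norm{\tilde\proj_j - \proj_j}{} \lesssim \delta$ for $j = 0,1$, hence the claimed bound $\Norm{\tilde\proj_0 - \proj_0}{} + \Norm{\tilde\proj_1 - \proj_1}{} \lesssim \delta$.

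There is one minor technical point to address at the start: $T$ is only defined on $X_1$, so $T\proj_1$ should be read as the composition $X \xrightarrow{\proj_1} X_1 \xrightarrow{T} X$, which is a bounded operator on $X$ with norm at most $\Norm{T}{}\Norm{\proj_1}{}$. Everything else is routine. I expect no serious obstacle here; the only thing to be careful about is getting the conjugation formula right (it is $S\proj_j S^{-1}$, not $\proj_j S^{-1}$, that gives genuine idempotents) and making sure the ranges come out as $X_0$ and $(I-T)X_1$ and not the other way around. If one wanted to avoid even naming $S$, one could alternatively argue abstractly: $X = X_0 \oplus (I-T)X_1$ is a direct sum because $(I-T): X_1 \to X$ is injective with closed complemented range for $T$ small (a standard perturbation fact), and then invoke the general principle that the induced projections depend continuously (indeed Lipschitz-ly) on the decomposition — but the explicit Neumann series is cleaner and self-contained.
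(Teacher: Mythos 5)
Your proof is correct and follows essentially the same route as the paper's: introduce $S = I - T\proj_1$, invert it by a Neumann series, and use $S$ to transport the decomposition. One small remark: your initial hesitation (``wait, rather...'') was unnecessary, because the two sets of formulas you wrote down actually coincide. Since $S$ acts as the identity on $X_0$, one has $S\proj_0 = \proj_0$, hence $S\proj_0 S^{-1} = \proj_0 S^{-1}$; and $(I - T\proj_1)\proj_1 = S\proj_1$, so $(I-T\proj_1)\proj_1 S^{-1} = S\proj_1 S^{-1}$. The paper works with the ``uncojugated'' forms $\tilde\proj_0 = \proj_0 S^{-1}$, $\tilde\proj_1 = S\proj_1 S^{-1}$ and verifies idempotence of $\tilde\proj_0$ by hand from the identity $S^{-1} = I + S^{-1}T\proj_1$ together with $\proj_1\proj_0 = 0$; your conjugation viewpoint $\tilde\proj_j = S\proj_j S^{-1}$ makes the idempotence and complementarity immediate, which is a slightly cleaner presentation of the same computation. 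The norm estimate via $\tilde\proj_j - \proj_j = (S-I)\proj_j S^{-1} + \proj_j(S^{-1}-I)$ is also a small streamlining of the paper's direct expansion of $\tilde\proj_j - \proj_j$.
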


\begin{proof}
  For \(\delta:=\frac{1}{2\Norm{\proj_1}{}}\),   \(I-T\proj_1\) is invertible. Let \(U:=(I-T\proj_1)^{-1}\), and observe the identity
\begin{equation*}
  U=I+UT\proj_1.
\end{equation*}
Define the operators
\begin{equation*}
  \tilde\proj_0:=\proj_0 U,\qquad\tilde\proj_1:= (I-T\proj_1) \proj_1 U.
\end{equation*}
Then \(\range(\tilde\proj_0)=X_0\), \(\range(\tilde\proj_1)=(I-T) X_1\), and
\begin{equation*}
  \tilde\proj_0+\tilde\proj_1=(I-\proj_1+ (I-T\proj_1) \proj_1)U=(I-T\proj_1)U=I.
\end{equation*}
It remains to show that \(\tilde\proj_0\) (and then also \(\tilde\proj_1\)) is a projection. This follows from
\begin{equation*}
  \tilde\proj_0   \tilde\proj_0 =  \proj_0 U\proj_0 U
  =\proj_0(I+UT\proj_1)\proj_0 U
  =\proj_0 U =  \tilde\proj_0,
\end{equation*}
where we used \(\proj_1\proj_0=0\) and \(\proj_0^2=I\).
 
Since $\tilde\proj_{0}-\proj_{0} = \proj_{0}UT\proj_{1}$,  $\tilde\proj_{1}-\proj_{1} = \proj_{1}UT\proj_{1}-T\proj_{1}U$,
and $\|U\|\leq  2 $, we also have that   $\Norm{\tilde\proj_{0}-\proj_{0}}{}+\Norm{\tilde\proj_{1}-\proj_{1}}{} \lesssim \delta.$  
 \end{proof}


\begin{proposition}\label{prop:Hodgepert}
Let \(p\in(1,\infty)\), and \(\Pi_{A}\) be a Hodge-Dirac operator with variable coefficients in $L^p$ 
which Hodge-decomposes $L^p$.
There exists \(\delta>0\) such that, if \(\Pi_{B}\) is another Hodge-Dirac operator with variable coefficients in $L^p$ with   $\Norm{B_{1}-A_{1}}{\infty}+\Norm{B_{2}-A_{2}}{\infty} < \delta$, then
\(\Pi_{B}\) Hodge-decomposes \(L^p\).
Moreover, the associated Hodge-projections satisfy
\begin{equation*}
  \Norm{\proj^A_0-\proj^B_0}{}
  +\Norm{\proj^A_{\Gamma}-\proj^B_{\Gamma}}{}
  +\Norm{\proj^A_{\rGammaSmall_A}-\proj^B_{\rGammaSmall_B}}{}
  \lesssim\delta.
\end{equation*} 
\end{proposition}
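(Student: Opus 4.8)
The strategy is to realize the Hodge decomposition of $\Pi_B$ as a perturbation of the Hodge decomposition of $\Pi_A$ via the abstract splitting stability lemma, Lemma~\ref{lem:pertSplit}, applied twice. By Lemma~\ref{lem:Hodge2}, it suffices to produce the two decompositions $L^p=\kernel_p(\Gamma)\oplus\overline{\range_p(\rGamma_B)}$ and $L^p=\kernel_p(\rGamma_B)\oplus\overline{\range_p(\Gamma)}$. For the first of these, I start from the $\Pi_A$-decomposition $L^p=\kernel_p(\Gamma)\oplus\overline{\range_p(\rGamma_A)}$, and I want to move the second summand $\overline{\range_p(\rGamma_A)}=A_1\overline{\range_p(\rGamma)}$ to $\overline{\range_p(\rGamma_B)}=B_1\overline{\range_p(\rGamma)}$ (using the consequence of the coercivity condition \eqref{coercond} recorded after Definition~\ref{def:pert}). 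Since both spaces are images of the \emph{same} closed subspace $\overline{\range_p(\rGamma)}$ under the isomorphisms $A_1$ and $B_1$ respectively, I can write $\overline{\range_p(\rGamma_B)}=(I-T)\overline{\range_p(\rGamma_A)}$ for a suitable bounded operator $T$ on $\overline{\range_p(\rGamma_A)}$; concretely $T=I-B_1A_1^{-1}$ on $A_1\overline{\range_p(\rGamma)}$, whose norm is controlled by $\Norm{B_1-A_1}{\infty}$ times the norm of $A_1^{-1}$ on $\overline{\range_p(\rGamma)}$, hence $\lesssim\delta$. Lemma~\ref{lem:pertSplit} then yields the desired splitting for $\delta$ small, together with the estimate $\Norm{\tilde\proj_0-\proj_0}{}+\Norm{\tilde\proj_1-\proj_1}{}\lesssim\delta$ for the associated projections.

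For the second decomposition $L^p=\kernel_p(\rGamma_B)\oplus\overline{\range_p(\Gamma)}$, the subspace $\overline{\range_p(\Gamma)}$ is fixed (it does not depend on $B$), so this time I perturb $\kernel_p(\rGamma_A)$. Here I would instead apply Lemma~\ref{lem:pertSplit} in the form that allows perturbing the \emph{first} summand, i.e. dualize: by Lemma~\ref{lem:HodgeDuality} the splitting $L^p=\kernel_p(\rGamma_B)\oplus\overline{\range_p(\Gamma)}$ is equivalent to $L^{p'}=\kernel_{p'}(\Gamma^*)\oplus\overline{\range_{p'}(\rGamma_B^*)}$, and since $\rGamma_B^*=B_2^*\rGamma^*B_1^*$ with $B_2^*$ an isomorphism on $\overline{\range_{p'}(\rGamma^*)}$ (the second half of \eqref{coercond}), the same argument as above applies with the roles of $B_1,B_2$ interchanged and $p$ replaced by $p'$. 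Alternatively, one observes directly that $\kernel_p(\rGamma_B)=\kernel_p(\rGamma B_2)$ can be written as $(I-S)\kernel_p(\rGamma_A)$ for a small operator $S$ built from $B_2-A_2$, but the duality route is cleaner and reuses the machinery already set up in Section~\ref{sec:hodge}. In either case one obtains the splitting and the bound $\lesssim\delta$ on the difference of the corresponding projections.

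Finally, I reconstruct the three Hodge projections of $\Pi_B$ from the two pairs of complementary projections just obtained. As in the proof of Lemma~\ref{lem:Hodge2}, the projection $\proj_\Gamma^B$ onto $\overline{\range_p(\Gamma)}$ is the one from the second decomposition; the projection $\proj_{\rGammaSmall_B}^B$ onto $\overline{\range_p(\rGamma_B)}$ is the one from the first decomposition; and $\proj_0^B=I-\proj_\Gamma^B-\proj_{\rGammaSmall_B}^B$ is the projection onto $\kernel_p(\Pi_B)$, using that $\kernel_p(\Pi_B)=\kernel_p(\Gamma)\cap\kernel_p(\rGamma_B)$ (established in the proof of Lemma~\ref{lem:Hodge2} once $\Pi_B$ Hodge-decomposes). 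The same identification applied to $A$ gives $\proj_0^A,\proj_\Gamma^A,\proj_{\rGammaSmall_A}^A$, and since for $A$ the operator $T$ (resp. $S$) is zero, the ``unperturbed'' projections in the two applications of Lemma~\ref{lem:pertSplit} are exactly $\proj_{\rGammaSmall_A}^A$ and $\proj_\Gamma^A$. Combining the two estimates $\lesssim\delta$ gives $\Norm{\proj_\Gamma^A-\proj_\Gamma^B}{}\lesssim\delta$ and $\Norm{\proj_{\rGammaSmall_A}^A-\proj_{\rGammaSmall_B}^B}{}\lesssim\delta$, and then $\Norm{\proj_0^A-\proj_0^B}{}\lesssim\delta$ by the triangle inequality.

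\textbf{Main obstacle.} The delicate point is not the perturbation lemma itself but setting up the operator $T$ correctly: one must check that $\overline{\range_p(\rGamma_B)}$ is genuinely of the form $(I-T)\overline{\range_p(\rGamma_A)}$ with $T$ bounded on the \emph{right} space and of norm $\lesssim\delta$, which hinges on the uniform boundedness of $A_1^{-1}:\overline{\range_p(\rGamma_A)}\to\overline{\range_p(\rGamma)}$ (from \eqref{coercond}, independent of $B$) and on $B_1$ mapping $\overline{\range_p(\rGamma)}$ into $L^p$ with norm $\Norm{B_1}{\infty}$; a small technical care is needed because $T$ must be defined on $X_1=\overline{\range_p(\rGamma_A)}$ as in the statement of Lemma~\ref{lem:pertSplit}, not on all of $L^p$. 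Once this bookkeeping is done, everything else is a routine assembly of the pieces already proved in Sections~\ref{sec:hodge}.
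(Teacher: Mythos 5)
Your proposal is correct and follows essentially the same route as the paper: reduce to the two binary splittings of Lemma~\ref{lem:Hodge2}, perturb the first one directly in $L^p$ and the second one on the dual side via Lemma~\ref{lem:HodgeDuality}, and apply Lemma~\ref{lem:pertSplit} twice with a perturbing operator of norm $\lesssim\delta$. Your $T = I - B_1 A_1^{-1}$ on $\overline{\range_p(\rGamma_A)}$ is exactly the paper's $T_1$ (defined there by $T_1 A_1\rGamma u := (A_1 - B_1)\rGamma u$); the only cosmetic difference is that the paper establishes $\overline{\range_p(\rGamma_B)}=(I-T_1)\overline{\range_p(\rGamma_A)}$ via an auxiliary operator $T_2$ satisfying $\rGamma B_2 = \rGamma A_2(I-T_2)$, whereas you invoke directly the recorded consequence $\overline{\range_p(\rGamma_B)}=B_1\overline{\range_p(\rGamma)}$ of condition \eqref{coercond}.
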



\begin{proof}
Consider the condition \eqref{eq:HodgeAss} equivalent to the Hodge-decomposition. 
Let us define $T_{1} \in \mathscr{L}(\overline{\range_p(\rGamma_{A})},L^{p})$ by $T_{1}A_{1}\rGamma u := (A_{1}-B_{1})\rGamma u$   which, by \eqref{coercond}, gives a well-defined operator of norm $\Norm{T_1}{}\lesssim\delta$, and we have $B_1\rGamma=(I-T_1)A_1\rGamma$.

On the dual side, we define $\tilde{T}_2\in\bddlin(\overline{\range_{p'}(\rGamma_A^*)},L^{p'})$ by $\tilde{T}_2 A_2^*\rGamma^* v=(A_2^*-B_2^*)\rGamma v$, which is similarly well-defined and satisfies $\Norm{\tilde{T}_2}{}\lesssim\delta$. Let then $T_2:=(\tilde{T}_2\proj_{\rGammaSmall_A^*})^*\in\bddlin(L^p)$, where $\proj_{\rGammaSmall_A^*}$ is the projection in $L^{p'}$ associated to the decomposition in \eqref{eq:HodgeAss}. By duality, it follows that $\range_p(B_2-A_2(I-T_2))\subseteq\kernel_p(\rGamma)$, which means that $\rGamma B_2=\rGamma A_2(I-T_2)$. Since the operators $I-T_2$ and $I-T_1\proj_{\rGammaSmall_A}$ are invertible for $\delta$ small enough,  
we then have that
\begin{equation*}
  \overline{\range(\rGamma_B)}
  =\overline{\range((I-T_1)\rGamma_A (I-T_2) )}
  =(I-T_1)\overline{\range(\rGamma_A)}.
\end{equation*}

Similarly, with \(B_2^*\rGamma^*=(I-T_3)A_2^*\rGamma^*\) and \(\rGamma^* B_1^*=\rGamma^* A_1^* (I-T_4) \), there holds \(\overline{\range(\rGamma_B^*)}=(I-T_3)\overline{\range(\rGamma_A^*)}\). Hence the claim follows from two applications of Lemma~\ref{lem:pertSplit} with \((X_0,X_1,T)=(\kernel_{p}(\Gamma),\overline{\range(\rGamma_A)},T_1)\) and \((X_0,X_1,T)=(\kernel_{p'}(\Gamma^*),\overline{\range(\rGamma_A^*)},T_3)\)
\end{proof}

By \eqref{coercond}, the restriction $A_1:\overline{\range_p(\rGamma)}\to\overline{\range_p(\rGamma_A)}$ is an isomorphism, and we denote by $A_1^{-1}$ its inverse. Thus the operator $A_1^{-1}\proj_{\rGammaSmall_A}$ is well-defined. We shall also need to perturb such operators:

\begin{corollary}\label{cor:Hodgepert}
Under the assumptions of Proposition~\ref{prop:Hodgepert}, there also holds
\begin{equation*}
  \Norm{A_1^{-1}\proj^A_{\rGammaSmall_A}-B_1^{-1}\proj^B_{\rGammaSmall_B}}{}
  \lesssim\delta.
\end{equation*}
\end{corollary}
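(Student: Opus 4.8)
The plan is to reduce the estimate on $A_1^{-1}\proj^A_{\rGammaSmall_A} - B_1^{-1}\proj^B_{\rGammaSmall_B}$ to the perturbation bound on the Hodge projections already established in Proposition~\ref{prop:Hodgepert}, together with the fact that $A_1$ and $B_1$ differ by at most $\delta$ in $L^\infty$. First I would recall that by \eqref{coercond} the restriction $A_1:\overline{\range_p(\rGamma)}\to\overline{\range_p(\rGamma_A)}$ is an isomorphism with inverse bounded by an implicit constant independent of $\delta$, and the same holds for $B_1$ once $\delta$ is small enough (this uniform bound for $B_1^{-1}$ follows because $\Pi_B$ is assumed to be a Hodge-Dirac operator with variable coefficients, so \eqref{coercond} holds for $B$). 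The operators $A_1^{-1}\proj^A_{\rGammaSmall_A}$ and $B_1^{-1}\proj^B_{\rGammaSmall_B}$ are then well-defined bounded operators on $L^p$, mapping into $\overline{\range_p(\rGamma)}$.

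The key algebraic identity is the resolvent-type decomposition
\begin{equation*}
  A_1^{-1}\proj^A_{\rGammaSmall_A}-B_1^{-1}\proj^B_{\rGammaSmall_B}
  =A_1^{-1}(\proj^A_{\rGammaSmall_A}-\proj^B_{\rGammaSmall_B})
   +(A_1^{-1}-B_1^{-1})\proj^B_{\rGammaSmall_B},
\end{equation*}
which I would establish on $L^p$. The first term is controlled directly: $\Norm{A_1^{-1}}{}\lesssim 1$ by \eqref{coercond}, and $\Norm{\proj^A_{\rGammaSmall_A}-\proj^B_{\rGammaSmall_B}}{}\lesssim\delta$ by Proposition~\ref{prop:Hodgepert}. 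For the second term, one writes $A_1^{-1}-B_1^{-1}$ on the relevant subspaces using the standard identity; more precisely, on $\overline{\range_p(\rGamma_B)}=\range(\proj^B_{\rGammaSmall_B})$ one has $B_1^{-1}\proj^B_{\rGammaSmall_B}$ taking values in $\overline{\range_p(\rGamma)}$, and there $A_1$ acts as an isomorphism, so
\begin{equation*}
  (A_1^{-1}-B_1^{-1})\proj^B_{\rGammaSmall_B}
  =A_1^{-1}(B_1-A_1)B_1^{-1}\proj^B_{\rGammaSmall_B},
\end{equation*}
after checking that the compositions make sense as operators between the correct range spaces. Since $\Norm{B_1-A_1}{\infty}<\delta$ and both $\Norm{A_1^{-1}}{}$ and $\Norm{B_1^{-1}\proj^B_{\rGammaSmall_B}}{}$ are $\lesssim 1$ (the latter using the uniform coercivity for $B$ and the uniform bound on $\proj^B_{\rGammaSmall_B}$, which follows from Proposition~\ref{prop:Hodgepert}), this term is also $\lesssim\delta$.

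The main obstacle I expect is bookkeeping the domains and ranges: $A_1^{-1}$ is only defined on $\overline{\range_p(\rGamma_A)}$ while $\proj^B_{\rGammaSmall_B}$ projects onto $\overline{\range_p(\rGamma_B)}$, and these two subspaces differ (by $O(\delta)$), so the composition $A_1^{-1}\proj^B_{\rGammaSmall_B}$ is not literally defined and must be interpreted via $A_1^{-1}\proj^A_{\rGammaSmall_A}$ composed with further corrections — which is exactly why the decomposition above is organized to have $A_1^{-1}$ applied only to differences of projections (with output absorbed) or to things already in its natural domain. A clean way to sidestep this is to note $\overline{\range_p(\rGamma_B)}=(I-T_1)\overline{\range_p(\rGamma_A)}$ from the proof of Proposition~\ref{prop:Hodgepert}, so $\proj^B_{\rGammaSmall_B}=(I-T_1)\proj^A_{\rGammaSmall_A}U$ for the bounded operator $U$ appearing there, and then everything composes legitimately; alternatively one simply checks that $A_1^{-1}$ extends boundedly to a neighbourhood using that $A_1$ is an isomorphism from all of $L^p/\kernel_p(\Gamma)$ (via Lemma~\ref{lem:Hodge2}). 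Once the domain issues are handled, the estimate is a routine triangle-inequality argument with the three bounds $\Norm{A_1^{-1}}{}\lesssim1$, $\Norm{\proj^A_{\rGammaSmall_A}-\proj^B_{\rGammaSmall_B}}{}\lesssim\delta$, and $\Norm{B_1-A_1}{\infty}<\delta$.
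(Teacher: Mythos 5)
Your proof is correct, and it takes a genuinely different route from the paper's. The paper substitutes the explicit formula $\proj^B_{\rGammaSmall_B}=(I-T_1)\proj^A_{\rGammaSmall_A}(I-T_1\proj^A_{\rGammaSmall_A})^{-1}$ (from the proof of Lemma~\ref{lem:pertSplit}) into the intertwining relation $B_1^{-1}(I-T_1)\proj^A_{\rGammaSmall_A}=A_1^{-1}\proj^A_{\rGammaSmall_A}$ and reads off the $O(\delta)$ bound from the explicit factor $T_1$; that is, it reopens the internals of Proposition~\ref{prop:Hodgepert}. You instead treat Proposition~\ref{prop:Hodgepert} as a black box, using only the conclusion $\Norm{\proj^A_{\rGammaSmall_A}-\proj^B_{\rGammaSmall_B}}{}\lesssim\delta$, together with a telescoping decomposition and the resolvent-type identity $A_1^{-1}-B_1^{-1}=A_1^{-1}(B_1-A_1)B_1^{-1}$. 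That is more modular. The one real issue, which you correctly flag, is that $A_1^{-1}$ is only defined on $\overline{\range_p(\rGamma_A)}$, so the expressions $A_1^{-1}(\proj^A_{\rGammaSmall_A}-\proj^B_{\rGammaSmall_B})$ and $A_1^{-1}\proj^B_{\rGammaSmall_B}$ are not literally defined. Of your two suggested fixes, the second is the cleaner one and preserves the modularity: read every occurrence of $A_1^{-1}$ (resp.\ $B_1^{-1}$) as the globally defined bounded operator $A_1^{-1}\proj^A_{\rGammaSmall_A}$ (resp.\ $B_1^{-1}\proj^B_{\rGammaSmall_B}$), which maps $L^p$ into $\overline{\range_p(\rGamma)}$. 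Under this reading the telescoping identity is an honest operator identity on $L^p$, and the second term becomes $A_1^{-1}\proj^A_{\rGammaSmall_A}(B_1-A_1)B_1^{-1}\proj^B_{\rGammaSmall_B}$; this is verified pointwise by setting $v:=B_1^{-1}\proj^B_{\rGammaSmall_B}u\in\overline{\range_p(\rGamma)}$, noting $\proj^B_{\rGammaSmall_B}u=B_1v=A_1v+(B_1-A_1)v$, and using that $A_1v\in\overline{\range_p(\rGamma_A)}$ is fixed by $\proj^A_{\rGammaSmall_A}$. (Your first suggested fix, via $\proj^B_{\rGammaSmall_B}=(I-T_1)\proj^A_{\rGammaSmall_A}U$, essentially reproduces the paper's computation and would forgo the black-box advantage.) One small point worth making explicit: the uniformity of $\Norm{B_1^{-1}}{}$ on $\overline{\range_p(\rGamma_B)}$ should be justified as $\Norm{B_1u}{p}\geq\Norm{A_1u}{p}-\Norm{B_1-A_1}{\infty}\Norm{u}{p}\geq(c-\delta)\Norm{u}{p}$, not merely by citing that \eqref{coercond} holds for $B$, since the latter does not a priori carry a $\delta$-independent constant.
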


\begin{proof}
We use the same notation as in the proof of Proposition~\ref{prop:Hodgepert} and observe from the identity $B_1\rGamma=(I-T_1)A_1\rGamma$ that $I-T_1:\overline{\range_p(\rGamma_A)}\to B_1\overline{\range_p(\rGamma)}$, and
\begin{equation}\label{eq:invHodge}
  B_1^{-1}(I-T_1)\proj^A_{\rGammaSmall_A}=A_1^{-1}\proj^A_{\rGammaSmall_A}.
\end{equation}
We further recall from the proof of Lemma~\ref{lem:pertSplit} that the projection $\proj^B_{\rGammaSmall_B}$ related to the splitting $L^p=\kernel_p(\Gamma)\oplus\overline{\range_p(\rGamma_B)}$, where $\overline{\range_p(\rGamma_B)}=(I-T_1)\overline{\range_p(\rGamma_A)}$, is given by
\begin{equation*}
  \proj^B_{\rGammaSmall_B}=(I-T_1)\proj^A_{\rGammaSmall_A}(I-T_1\proj^A_{\rGammaSmall_A})^{-1}.
\end{equation*}
Combining this with \eqref{eq:invHodge} shows that
\begin{equation*}
\begin{split}
  B_1^{-1}\proj^B_{\rGammaSmall_B}
  &=A_1^{-1}\proj^A_{\rGammaSmall_A}(I-T_1\proj^A_{\rGammaSmall_A})^{-1} \\
  &=A_1^{-1}\proj^A_{\rGammaSmall_A}
   -(A_1^{-1}\proj^A_{\rGammaSmall_A})T_1\proj^A_{\rGammaSmall_A}
     (I-T_1\proj^A_{\rGammaSmall_A})^{-1},
\end{split}
\end{equation*}
which proves the claim, since the second term contains the factor $T_1$ of norm $\Norm{T_1}{}\lesssim\delta$.
\end{proof}

\section{Functional calculus}

In this section we collect some general facts about the functional calculus of bisectorial operators   in reflexive Banach spaces. We provide, in the context of the discrete randomized quadratic estimates required in \cite{hmp}, versions of results originally obtained in \cite{cdmy}.
Lemma \ref{lem:identity} can be seen as a discrete Calder\'on reproducing formula, and Lemma \ref{lem:Rbisec} as a Schur estimate, while Propositions \ref{prop:fund} and \ref{prop:funddual} express the fundamental link between functional calculus and square function estimates. Such results are not new, and have been developed from \cite{cdmy} by various authors, most notably Kalton and Weis  (cf.~\cite{kw}). Here we hope, however, to provide   simpler versions of both the statements and the proofs of these facts.

Let $\mathcal{A}$ denote a bisectorial operator in   a reflexive Banach space,   with angle $\omega$, and let $\theta \in (\omega,\frac{\pi}{2})$. We use the following notations.
$$
r(\mathcal{A}) = (I+i\mathcal{A})^{-1}, \quad p(\mathcal{A})=r(\mathcal{A})r(-\mathcal{A}) = (I+\mathcal{A}^{2})^{-1}, \quad
q(\mathcal{A})=\frac{i}{2}(r(\mathcal{A})-r(-\mathcal{A})) = \frac{\mathcal{A}}{I+\mathcal{A}^{2}}. $$

\begin{lemma}\label{lem:identity}
The following series converges in the strong operator topology:
\begin{equation*}
  \frac{3}{2}\sum_{k\in\Z}q(2^{k}\mathcal{A})q(2^{k+1}\mathcal{A})=\proj_{\overline{\range(\mathcal{A})}}.
\end{equation*}
\end{lemma}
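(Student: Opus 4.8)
The idea is to reduce the series identity to a scalar Calder\'on-type identity applied through the $H^\infty$ functional calculus, using that $\mathcal A$ is bisectorial on a reflexive space and therefore admits the decomposition $Y=\kernel(\mathcal A)\oplus\overline{\range(\mathcal A)}$, with bounded projection $\proj_{\overline{\range(\mathcal A)}}$. First I would introduce the scalar function $\varphi(z):=\tfrac{z}{1+z^2}$, so that $q(t\mathcal A)=\varphi(t\mathcal A)$ for $t>0$, and observe that $\varphi\in H_0^\infty(S_\nu)$ for any $\nu\in(\omega,\pi/2)$, since $|\varphi(z)|\lesssim|z/(1+z^2)|$. The candidate summand is then $\psi_k(z):=\varphi(2^k z)\varphi(2^{k+1}z)$, and the partial sums are $\Psi_{M,N}(z):=\tfrac32\sum_{k=-M}^N\psi_k(z)$, each of which lies in $H_0^\infty(S_\nu)$.

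\textbf{Key steps.} The analytic heart is the scalar computation
\begin{equation*}
  \frac{3}{2}\sum_{k\in\Z}\varphi(2^k z)\varphi(2^{k+1}z)=1\qquad\text{for all }z\in S_\nu\setminus\{0\},
\end{equation*}
together with the uniform bound $\sup_{M,N}\Norm{\Psi_{M,N}}{H^\infty(S_\nu)}<\infty$ and locally uniform convergence of $\Psi_{M,N}$ to the constant function $1$ on $S_\nu\setminus\{0\}$. The identity itself I would check by a direct telescoping/geometric argument: on each ray $z=re^{i\alpha}$ the terms decay like $\min(2^{2k}r^2,2^{-2k}r^{-2})$ away from the transition scale $2^k\sim r^{-1}$, giving absolute convergence with a bound independent of $z$, and the value $1$ can be pinned down either by evaluating the sum in closed form (partial fractions of $\varphi(2^kz)\varphi(2^{k+1}z)$ in the variable $w=z^2$, producing a telescoping series) or by appealing to the known dyadic Calder\'on reproducing identity for this specific $\varphi$. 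Once the scalar facts are in hand, I would invoke the Convergence Lemma for the $H^\infty$ calculus on the subspace $\overline{\range(\mathcal A)}$: a bisectorial operator on a reflexive space satisfies $\Norm{\psi(\mathcal A)}{}\lesssim\Norm{\psi}{H^\infty(S_\nu)}$ restricted to $\overline{\range(\mathcal A)}$ for $\psi\in H_0^\infty$ — actually, more elementarily, each $\psi(\mathcal A)$ is bounded on all of $Y$ by the contour integral definition, and on $\overline{\range(\mathcal A)}$ a bounded net $\psi_n(\mathcal A)$ converging locally uniformly with appropriate decay converges strongly to $\psi(\mathcal A)$. Applying this with $\psi_n=\Psi_{M_n,N_n}\to 1$ yields strong convergence of the partial sums to $\proj_{\overline{\range(\mathcal A)}}$ on $\overline{\range(\mathcal A)}$. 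Finally, on $\kernel(\mathcal A)$ every $q(2^k\mathcal A)$ vanishes, so the series is identically zero there, and combining the two pieces through the decomposition $Y=\kernel(\mathcal A)\oplus\overline{\range(\mathcal A)}$ gives the claim on all of $Y$.

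\textbf{Main obstacle.} The routine-looking but genuinely delicate point is establishing strong convergence of the partial sums $\Psi_{M,N}(\mathcal A)$ rather than mere boundedness: a uniformly bounded sequence of operators $\psi_n(\mathcal A)$ with $\psi_n\to 1$ locally uniformly need not converge strongly to the identity on $\overline{\range(\mathcal A)}$ without some decay/non-degeneracy input. The clean way around this is to first prove strong convergence on the dense subspace $\range(\mathcal A)\subseteq\overline{\range(\mathcal A)}$ — for $u=\mathcal A v$ one writes $\Psi_{M,N}(\mathcal A)\mathcal A v=\tilde\Psi_{M,N}(\mathcal A)v$ with $\tilde\Psi_{M,N}(z)=z^{-1}\Psi_{M,N}(z)\Psi$... more precisely one exploits that $\psi_k(z)/z$ and $z\,\psi_k(z)$ still lie in $H_0^\infty$ with summable norms, so that $\Psi_{M,N}(\mathcal A)u\to u$ with an \emph{absolutely} convergent bound on $\range(\mathcal A)$ — and then extend to $\overline{\range(\mathcal A)}$ by the uniform boundedness established from the scalar bound. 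I expect the bookkeeping of these auxiliary $H_0^\infty$ estimates (showing $\sum_k\Norm{\psi_k(\cdot)/\cdot}{H^\infty}<\infty$ etc., uniformly in the truncation) to be the part requiring the most care, though it is conceptually standard.
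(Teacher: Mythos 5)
Your proposal takes a genuinely different route from the paper, and it contains a gap. The paper's proof is a direct operator-level telescoping: one checks by resolvent algebra that
\begin{equation*}
  \tfrac{3}{2}\,q(2^{k}\mathcal{A})q(2^{k+1}\mathcal{A})
  = p(2^{k}\mathcal{A}) - p(2^{k+1}\mathcal{A}),
\end{equation*}
so the partial sums collapse to $p(2^{-M}\mathcal{A})-p(2^{N+1}\mathcal{A})$, and the only analytic input is the pair of strong limits $p(t\mathcal{A})\to I$ as $t\to 0$ and $p(t\mathcal{A})\to\proj_{\kernel(\mathcal{A})}$ as $t\to\infty$, which hold for any bisectorial operator on a reflexive space (they follow from density of $\domain(\mathcal{A}^2)$, respectively of $\kernel(\mathcal{A})+\range(\mathcal{A}^2)$, together with uniform boundedness of $p(t\mathcal{A})$). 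You instead pass to the scalar Calder\'on identity $\frac32\sum_k\varphi(2^kz)\varphi(2^{k+1}z)=1$ and then invoke a Convergence-Lemma-type argument — a correct destination, but a heavier vehicle, since the limit function $1$ lies outside $H_0^\infty$ and so one must justify carefully that uniform boundedness of $\Psi_{M,N}(\mathcal{A})$ plus local uniform convergence of $\Psi_{M,N}\to 1$ yields strong convergence.

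The specific fix you propose for that step does not work as stated. You claim that $\psi_k(z)/z$ and $z\psi_k(z)$ lie in $H_0^\infty$ with summable norms, so that $\Psi_{M,N}(\mathcal{A})u\to u$ on $\range(\mathcal{A})$ by an \emph{absolutely} convergent series. But for $\psi_k(z)=\varphi(2^kz)\varphi(2^{k+1}z)$, the maximum of $|\psi_k(z)/z|$ is attained near $|z|\sim 2^{-k}$ and is of order $2^{k}$, while $\Norm{z\psi_k(z)}{H^\infty}\sim 2^{-k}$; neither is summable over $k\in\Z$. Indeed, no absolutely convergent decomposition can produce the constant function $1$ from $H_0^\infty$ pieces, so the series is necessarily only conditionally (telescopingly) convergent. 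The step can be repaired — e.g., write $u=\varphi(\mathcal{A})w$ on the dense range of $\varphi(\mathcal{A})$, observe $\Psi_{M,N}(\mathcal{A})u=(\Psi_{M,N}\varphi)(\mathcal{A})w$, and apply dominated convergence in the contour integral using the uniform $H_0^\infty$-type bound $|\Psi_{M,N}(z)\varphi(z)|\lesssim|\varphi(z)|$ — but at that point you have essentially reconstructed the telescoping and its strong limits through a more roundabout route. I recommend you instead verify the one-line operator identity above for each term and then appeal directly to the two strong limits of $p(t\mathcal{A})$.
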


\begin{proof}
Observe first that \(p(t\mathcal{A})\to\proj_{\kernel(\mathcal{A})}\) as \(t\to\infty\) and \(p(t\mathcal{A})\to I\) as \(t\to 0\). Hence
\begin{equation*}\begin{split}
  \proj_{\overline{\range(\mathcal{A})}}
  &=I-\proj_{\kernel(\mathcal{A})}
  =\sum_k(p(2^{k}\mathcal{A})-p(2^{k+1}\mathcal{A}) )\\
  &=\sum_k p(2^{k}\mathcal{A})[(I+2^{2(k+1)}\mathcal{A}^{2})-(I+2^{2k}\mathcal{A}^{2})]p(2^{k+1}\mathcal{A}) \\
  &=\frac{3}{2}\sum_k p(2^{k}\mathcal{A})(2^{k}\mathcal{A})(2^{k+1}\mathcal{A})p(2^{k+1}\mathcal{A})
  =\frac{3}{2}\sum_k q(2^{k}\mathcal{A})q(2^{k+1}\mathcal{A}),
  \end{split}\end{equation*}
as we wanted to show.
\end{proof}

\begin{lemma}\label{lem:Rbisec}
Let $\mathcal{A}$ be \(R\)-bisectorial and let \(\eta(x):=\min\{x,1/x\}(1+\log\max\{x,1/x\})\). Then the set
\begin{equation*}
 \{\eta(s/t)^{-1}q(t\mathcal{A}) f(\mathcal{A})q(s\mathcal{A}):t,s>0;f\in H_0^{\infty}(S_{\theta}),\Norm{f}{\infty}\leq 1\}
\end{equation*}
is R-bounded.
\end{lemma}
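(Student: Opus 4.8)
The statement asks for $R$-boundedness of the triple products $\eta(s/t)^{-1}q(t\mathcal A)f(\mathcal A)q(s\mathcal A)$, uniformly over $s,t>0$ and over the unit ball of $H_0^\infty(S_\theta)$. The plan is to reduce everything to the $R$-bisectoriality hypothesis by a Cauchy-integral representation of $f(\mathcal A)$, followed by an $R$-bounded version of a Schur-type estimate; the weight $\eta$ will emerge precisely from integrating the elementary scalar kernel bound.

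First I would fix $\theta'\in(\omega,\theta)$ and write, for $f\in H_0^\infty(S_\theta)$,
\begin{equation*}
  q(t\mathcal A)f(\mathcal A)q(s\mathcal A)
  =\frac{1}{2\pi i}\int_{\partial S_{\theta'}}f(\lambda)\,q(t\mathcal A)(\lambda-\mathcal A)^{-1}q(s\mathcal A)\ud\lambda .
\end{equation*}
On the contour we have $(\lambda-\mathcal A)^{-1}=\lambda^{-1}(I-\lambda^{-1}\mathcal A)^{-1}$, so the resolvent factor is (up to the scalar $\lambda^{-1}$) of the form $r(\mp i\lambda^{-1}\mathcal A)$ and hence belongs to the $R$-bounded family provided by $R$-bisectoriality. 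The key algebraic observation is that the scalar function $\lambda\mapsto q(t\lambda)\lambda^{-1}q(s\lambda)$ has the right decay: using $|q(t\lambda)|\lesssim \min\{t|\lambda|,(t|\lambda|)^{-1}\}$ on $\partial S_{\theta'}$, one gets that $\int_{\partial S_{\theta'}}|q(t\lambda)||\lambda|^{-1}|q(s\lambda)|\,|\ud\lambda|\lesssim \eta(s/t)$ by a direct computation (splitting the integral at $|\lambda|=1/t$ and $|\lambda|=1/s$; the logarithmic factor in $\eta$ comes from the region between these two scales where both $q$-factors are $O(1)$ simultaneously only in a borderline way, more precisely from $\int_{1/\max(s,t)}^{1/\min(s,t)}\frac{dr}{r}$). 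This is the routine scalar estimate I would not grind through.

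The substantive step is to upgrade the integral identity to an $R$-boundedness statement. Here I would invoke the standard principle (Kalton--Weis; see the discussion before the lemma) that if $\{S_\lambda:\lambda\in\Lambda\}$ is $R$-bounded and $g\in L^1(\Lambda,\nu)$, then the family $\{\int_\Lambda g(\lambda)S_\lambda\ud\nu(\lambda)\}$, indexed by all $g$ with $\|g\|_{L^1}\le 1$, is again $R$-bounded with the same $R$-bound. Applying this with $\Lambda=\partial S_{\theta'}$, $S_\lambda = q(t\mathcal A)\lambda(\lambda-\mathcal A)^{-1}q(s\mathcal A)/\|\cdot\|$ suitably normalized, and $g(\lambda)=f(\lambda)q(t\lambda)\lambda^{-1}q(s\lambda)\cdot(\text{normalization})$ with $\|g\|_{L^1}\lesssim \|f\|_\infty\,\eta(s/t)$, yields exactly the claim after dividing by $\eta(s/t)$. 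One must also check that $\{q(t\mathcal A)\lambda(\lambda-\mathcal A)^{-1}q(s\mathcal A):t,s>0,\lambda\in\partial S_{\theta'}\}$ is itself $R$-bounded: this follows because $q(t\mathcal A)$ and $q(s\mathcal A)$ are each in an $R$-bounded family (being resolvent combinations, via $R$-bisectoriality and the product/sum stability of $R$-bounds), and the middle factor $\lambda(\lambda-\mathcal A)^{-1}$ is $R$-bounded on the contour; $R$-boundedness is stable under products.

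The main obstacle, and the place I would be most careful, is the scalar kernel estimate giving the $\eta(s/t)$ weight with the correct logarithmic correction: one needs the bound to be symmetric in $s,t$, to have the sharp power and the single power of the logarithm, and to be integrable (in particular the contour passes between the two ``bumps'' of $q(t\cdot)$ and $q(s\cdot)$ so no divergence occurs at $\lambda\to 0$ or $\lambda\to\infty$ thanks to the $H_0^\infty$ decay of $f$ and the decay of the $q$-factors). Everything else is an assembly of the $R$-boundedness first principles recalled above together with $R$-bisectoriality of $\mathcal A$.
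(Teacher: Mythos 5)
Your overall strategy is the same as the paper's (Cauchy integral for the functional calculus, the Kalton--Weis ``absolute convex hull of an $R$-bounded set is $R$-bounded'' principle, and an $L^1$ estimate on the scalar kernel producing the weight $\eta$), and your scalar kernel estimate $\int_{\partial S_{\theta'}} |q_t(\lambda)||f(\lambda)||q_s(\lambda)|\,|d\lambda|/|\lambda| \lesssim \|f\|_\infty\,\eta(s/t)$ is exactly the one the paper computes. However, the representation you actually write down does not support that estimate: you put $q(t\mathcal A)$ and $q(s\mathcal A)$ outside the Cauchy integral as operators, so the scalar function being integrated against the resolvent is only $f(\lambda)/\lambda$, and its $L^1$ norm on the contour is not controlled by $\|f\|_\infty$ alone (it depends on the $H_0^\infty$ decay constants of $f$, which are not quantitative in the claim) and does not carry the factor $\eta(s/t)$. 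Conversely, if you take $g(\lambda)=f(\lambda)q(t\lambda)q(s\lambda)/\lambda$ as you state at the end but keep $S_\lambda = q(t\mathcal A)\lambda(\lambda-\mathcal A)^{-1}q(s\mathcal A)$, then $\int g(\lambda)S_\lambda\,d\lambda$ reproduces $q(t\mathcal A)^2f(\mathcal A)q(s\mathcal A)^2$, not the target operator.

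The fix is to first use multiplicativity of the $H_0^\infty$ calculus to write $q(t\mathcal A)f(\mathcal A)q(s\mathcal A) = (q_t\cdot f\cdot q_s)(\mathcal A)$ as a \emph{single} function of $\mathcal A$, and then apply the Cauchy formula to this function:
\begin{equation*}
  (q_t f q_s)(\mathcal A) = \frac{1}{2\pi i}\int_{\partial S_{\theta'}} (q_t f q_s)(\lambda)\,(I-\lambda^{-1}\mathcal A)^{-1}\,\frac{d\lambda}{\lambda}.
\end{equation*}
Now the only operator-valued factor is the resolvent $(I-\lambda^{-1}\mathcal A)^{-1}$, which lies in a single $R$-bounded family by $R$-bisectoriality, and the scalar coefficient is precisely $(q_t f q_s)(\lambda)/\lambda$ whose $L^1$ norm your estimate shows is $\lesssim\|f\|_\infty\,\eta(s/t)$. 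With that correction your plan goes through and coincides with the paper's proof; there is no need to invoke $R$-boundedness of $\{q(t\mathcal A)\}$ or products of $R$-bounded families for this lemma.
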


\begin{proof} 
Denoting $q_t(\lambda):=q(t\lambda)$,   observe that
\begin{equation*}
  q(t\mathcal{A}) f(\mathcal{A})q(s\mathcal{A})
    =  (q_t\cdot f\cdot q_s)  (\mathcal{A})
  =\frac{1}{2\pi i}\int_{\gamma} (q_t fq_s) (\lambda)(I-\frac{1}{\lambda}\mathcal{A})^{-1}\frac{\ud\lambda}{\lambda},
\end{equation*}
where $\gamma$ denotes \(\partial S_{\theta'}\), for some $\theta' \in (\theta,\frac{\pi}{2})$, parameterized by arclength and directed anti-clockwise 
around \(S_{\theta}\), and
the resolvents \((I-\lambda^{-1}\mathcal{A})^{-1}\) belong to an \(R\)-bounded set. The operators 
\(q(t\mathcal{A}) f(\mathcal{A})q(s\mathcal{A})\) are hence in a dilation of the absolute convex hull of this \(R\)-bounded set (see \cite{kk} for information on R-boundedness techniques). To evaluate the dilation factor, observe that
\begin{equation*}
  \abs{  q_t f q_s  (\lambda)}\lesssim\Norm{f}{\infty}\frac{s\abs{\lambda}}{1+(s\abs{\lambda})^2}
   \frac{t\abs{\lambda}}{1+(t\abs{\lambda})^2},
\end{equation*}
and splitting the integral into three regions (depending on the position of $|\lambda|$ with respect
to $\min(\frac{1}{t},\frac{1}{s})$  and $\max(\frac{1}{t},\frac{1}{s})$) it follows that
\begin{equation*}
  \int_{\gamma}\abs{  q_t f q_s  (\lambda)}\frac{\abs{\ud\lambda}}{\abs{\lambda}}
  \lesssim\Norm{f}{\infty}\eta(s/t),
\end{equation*}
which implies the asserted \(R\)-bound.
\end{proof}



\begin{proposition}
\label{prop:fund}
Let \(\mathcal{A}\) be \(R\)-bisectorial (with angle $\mu$) and satisfy the   two-sided   quadratic estimate
\begin{equation}\label{eq:quadrEst}
  \Norm{u}{}
  \eqsim\Exp\BNorm{\sum_k\radem_k q(2^{k}\mathcal{A}) u}{},\qquad
  u\in\overline{\range(\mathcal{A})}.
\end{equation}
Then \(\mathcal{A}\) has a bounded \(H^{\infty}\) functional calculus (with angle $\mu$).
\end{proposition}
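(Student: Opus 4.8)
The plan is to establish that $\Norm{f(\mathcal{A})}{\bddlin(Y)}\lesssim\Norm{f}{\infty}$ for every $f\in H_0^\infty(S_\theta)$ and every $\theta\in(\mu,\frac\pi2)$; by the very definition of a bounded $H^\infty$ functional calculus this yields the claim with angle $\mu$. Normalizing $\Norm{f}{\infty}\leq1$, one first reduces to vectors in $\overline{\range(\mathcal{A})}$: since $f$ decays at the origin, $f(\mathcal{A})$ annihilates $\kernel(\mathcal{A})$ and has range contained in $\overline{\range(\mathcal{A})}$, while R-bisectoriality (hence bisectoriality) provides the bounded splitting $Y=\kernel(\mathcal{A})\oplus\overline{\range(\mathcal{A})}$; so it suffices to bound $\Norm{f(\mathcal{A})u}{}$ for $u\in\overline{\range(\mathcal{A})}$.

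Next I would feed $f(\mathcal{A})u$, which again lies in $\overline{\range(\mathcal{A})}$, into the lower estimate of \eqref{eq:quadrEst}, reducing matters to controlling $\Exp\BNorm{\sum_k\radem_k q(2^k\mathcal{A})f(\mathcal{A})u}{}$. Into this I would insert the discrete Calder\'on reproducing formula of Lemma~\ref{lem:identity}, writing $u=\tfrac{3}{2}\sum_j q(2^j\mathcal{A})v_j$ with $v_j:=q(2^{j+1}\mathcal{A})u$, so that the quantity to be estimated becomes the double sum $\tfrac{3}{2}\Exp\BNorm{\sum_k\radem_k\sum_j q(2^k\mathcal{A})f(\mathcal{A})q(2^j\mathcal{A})v_j}{}$.

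The core is then a randomized Schur argument. Reindexing by $m:=j-k$ and summing first in $k$, I observe that for each fixed $m$ the operators $\eta(2^m)^{-1}q(2^k\mathcal{A})f(\mathcal{A})q(2^{k+m}\mathcal{A})$ belong, uniformly in $k$, $m$ and in $f$ with $\Norm{f}{\infty}\leq1$, to the R-bounded family of Lemma~\ref{lem:Rbisec} (here $s/t=2^{k+m}/2^k=2^m$). Applying R-boundedness with the coefficient vectors $v_{k+m}$ and then shifting the Rademacher index gives, for each $m$,
\begin{equation*}
  \Exp\BNorm{\sum_k\radem_k q(2^k\mathcal{A})f(\mathcal{A})q(2^{k+m}\mathcal{A})v_{k+m}}{}
  \lesssim\eta(2^m)\,\Exp\BNorm{\sum_k\radem_k v_k}{}.
\end{equation*}
Summing over $m\in\Z$ and using $\sum_{m\in\Z}\eta(2^m)=\sum_{m\in\Z}2^{-|m|}(1+|m|\log 2)<\infty$ bounds the double sum by a constant times $\Exp\BNorm{\sum_k\radem_k v_k}{}=\Exp\BNorm{\sum_k\radem_k q(2^{k+1}\mathcal{A})u}{}$, which, after the reindexing $k\mapsto k-1$, is $\lesssim\Norm{u}{}$ by the upper estimate in \eqref{eq:quadrEst}. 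Chaining the inequalities yields $\Norm{f(\mathcal{A})u}{}\lesssim\Norm{u}{}$, as required.

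The step I expect to be the main obstacle is the off-diagonal bookkeeping: arranging the double sum so that the decay factor $\eta(2^m)$ sits in front of a Rademacher average uniformly controlled by $\Norm{u}{}$, thereby making the sum over $m$ absolutely convergent. This is precisely the purpose of Lemma~\ref{lem:Rbisec}, so once that ``R-Schur'' estimate is in hand the difficulty essentially evaporates; the remaining ingredients --- that $f(\mathcal{A})$ respects the kernel/range decomposition, that the Calder\'on formula of Lemma~\ref{lem:identity} may be inserted termwise, and the translation invariance of the Rademacher sums --- are routine.
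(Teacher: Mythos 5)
Your proof is correct and follows essentially the same route as the paper: reduce to $u\in\overline{\range(\mathcal{A})}$, apply the lower quadratic estimate to $f(\mathcal{A})u$, insert the Calder\'on reproducing formula of Lemma~\ref{lem:identity}, reindex by $m=j-k$, apply the Schur-type R-bound of Lemma~\ref{lem:Rbisec} to extract the summable factor $\eta(2^m)$, and finish with the upper quadratic estimate. The only (welcome) addition over the paper's terse write-up is that you make the reduction to $\overline{\range(\mathcal{A})}$ and the Rademacher index shift explicit; the paper leaves these as understood.
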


\begin{proof}
Suppose (\ref{eq:quadrEst}) holds. Let \(u\in\overline{\range(\mathcal{A})}\),
$\theta \in (\mu,\frac{\pi}{2})$,
 and \(f\in H^{\infty}(S_{\theta})\). Then
\begin{equation*}\begin{split}
  \Norm{f(\mathcal{A})u}{} &\lesssim\Exp\BNorm{\sum_k\radem_k q(2^{k}\mathcal{A}) f(\mathcal{A})u}{}\\
  &\eqsim\Exp\BNorm{\sum_k\radem_k q(2^{k}\mathcal{A}) f(\mathcal{A}) \sum_j q(2^{j}\mathcal{A})q(2^{j+1}\mathcal{A})u}{}  \\
  &\lesssim\sum_m\Exp\BNorm{\sum_k\radem_kq(2^{k}\mathcal{A}) f(\mathcal{A})q(2^{k+m}\mathcal{A})
  q(2^{k+m+1}\mathcal{A})u}{} \\
  &\lesssim\sum_m\eta(2^m)\Norm{f}{\infty}\BNorm{\sum_k\radem_k q(2^{k+m+1}\mathcal{A})u}{} \\
  &\lesssim\sum_m(1+\abs{m})2^{-\abs{m}}\Norm{f}{\infty}\Norm{u}{}\lesssim\Norm{f}{\infty}\Norm{u}{},
   \end{split}
\end{equation*}
where we used \(\lesssim\) from \eqref{eq:quadrEst}, Lemma~\ref{lem:identity}, the triangle inequality after relabelling \(j=k+m\), Lemma~\ref{lem:Rbisec}, and \(\gtrsim\) from \eqref{eq:quadrEst}.
\end{proof}

We also use the following variant.

\begin{proposition}
\label{prop:funddual}
Let \(\mathcal{A}\) be \(R\)-bisectorial (with angle $\mu$) and satisfy the   two quadratic estimates  
\begin{equation}
\begin{split}
  \Exp\BNorm{\sum_k\radem_k q(2^{k}\mathcal{A}) u}{X}
  &\lesssim \Norm{u}{X},\qquad  u\in X,\\
  \Exp\BNorm{\sum_k\radem_k q(2^{k}\mathcal{A}^*) v}{X^{*}}
  &\lesssim \Norm{v}{X^{*}},\qquad  v\in X^{*},
\end{split}
\end{equation}
   
Then \(\mathcal{A}\) has a bounded \(H^{\infty}\) functional calculus (with angle $\mu$).
\end{proposition}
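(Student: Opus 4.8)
The plan is to reduce Proposition~\ref{prop:funddual} to Proposition~\ref{prop:fund} by upgrading the one-sided (``$\lesssim$'') square function estimate for $\mathcal{A}$ on $X$ to the two-sided estimate \eqref{eq:quadrEst} on $\overline{\range(\mathcal{A})}$. The key point is a duality pairing: for $u\in\overline{\range(\mathcal{A})}$ and a suitable $v\in\overline{\range(\mathcal{A}^*)}\subseteq X^*$, use Lemma~\ref{lem:identity} applied to $\mathcal{A}$ (and the analogous identity for $\mathcal{A}^*$, which is also $R$-bisectorial since $R$-bisectoriality passes to adjoints in reflexive spaces) to write $u=\frac32\sum_j q(2^j\mathcal{A})q(2^{j+1}\mathcal{A})u$ and then move one copy of $q$ onto the $v$ side. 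Concretely, $\pair{u}{v}=\frac32\sum_j\pair{q(2^{j+1}\mathcal{A})u}{q(2^j\mathcal{A}^*)v}$, and by the standard ``Rademacher sum = operator applied to Rademacher sum'' trick together with Hölder in the Rademacher norms (Kahane's contraction and the duality $L^1(\Omega;X)$--$L^\infty(\Omega;X^*)$, or more simply the Cauchy--Schwarz-type inequality for Rademacher averages), one gets
\begin{equation*}
  \abs{\pair{u}{v}}\lesssim
  \Exp\BNorm{\sum_j\radem_j q(2^{j+1}\mathcal{A})u}{X}\cdot
  \Exp\BNorm{\sum_j\radem_j q(2^j\mathcal{A}^*)v}{X^*}
  \lesssim \Exp\BNorm{\sum_j\radem_j q(2^{j}\mathcal{A})u}{X}\cdot\Norm{v}{X^*},
\end{equation*}
using the second hypothesis on $\mathcal{A}^*$ in the last step. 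Taking the supremum over $v\in X^*$ with $\Norm{v}{X^*}\le1$ and using reflexivity (so that $\overline{\range(\mathcal{A})}$ is norming-dual-friendly, i.e.\ $\Norm{u}{}=\sup_{\Norm{v}{}\le1}\abs{\pair{u}{v}}$) yields $\Norm{u}{}\lesssim\Exp\BNorm{\sum_j\radem_j q(2^{j}\mathcal{A})u}{X}$, which is the ``$\gtrsim$'' half of \eqref{eq:quadrEst}. The ``$\lesssim$'' half is exactly the first hypothesis.

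Having established the two-sided estimate \eqref{eq:quadrEst} for $\mathcal{A}$ on $\overline{\range(\mathcal{A})}$, I would then invoke Proposition~\ref{prop:fund} verbatim to conclude that $\mathcal{A}$ has a bounded $H^\infty$ functional calculus with angle $\mu$ on $\overline{\range(\mathcal{A})}$. Finally, since $\mathcal{A}$ is (R-)bisectorial, $X=\kernel(\mathcal{A})\oplus\overline{\range(\mathcal{A})}$, and on $\kernel(\mathcal{A})$ the functional calculus acts as $f(0)\id$; combining the two parts gives the bounded $H^\infty$ functional calculus on all of $X$, with the same angle $\mu$.

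The main obstacle I anticipate is the duality step: one must be careful that the pairing $\pair{q(2^{j+1}\mathcal{A})u}{q(2^j\mathcal{A}^*)v}$ is legitimate and that the Rademacher-sum Hölder inequality is applied correctly in the Banach-space-valued setting (where $\Exp\Norm{\sum\radem_k x_k}{X}$ and $\Exp\Norm{\sum\radem_k x_k^*}{X^*}$ are not literally dual norms, but satisfy $\abs{\Exp\sum_k\pair{x_k}{x_k^*}}\le\big(\Exp\Norm{\sum_k\radem_k x_k}{X}\big)\big(\Exp\Norm{\sum_k\radem_k x_k^*}{X^*}\big)$ by independence and the triangle inequality, since $\Exp[\radem_j\radem_k]=\delta_{jk}$). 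One also needs the convergence of the series defining these Rademacher sums, which is supplied by the hypotheses together with Lemma~\ref{lem:identity}; and one should note that $q(2^j\mathcal{A}^*)$ maps into $\overline{\range(\mathcal{A}^*)}$, so no contribution is lost when $v$ ranges over a norming set. Apart from these bookkeeping matters, the argument is a direct transcription of the classical $L^2$ duality argument of \cite{cdmy} into the randomized framework, parallel to Proposition~\ref{prop:fund}.
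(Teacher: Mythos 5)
Your proof is correct and takes essentially the same approach as the paper's: both dualize one factor of the Calder\'on reproducing formula (Lemma~\ref{lem:identity}) against $v$, apply the Rademacher duality inequality, use the adjoint quadratic estimate to absorb the $v$-factor, and ultimately lean on the argument of Proposition~\ref{prop:fund}. The only difference is organizational—you first record the resulting lower bound $\Norm{u}{}\lesssim\Exp\BNorm{\sum_k\radem_k q(2^k\mathcal{A})u}{X}$ on $\overline{\range(\mathcal{A})}$ and then invoke Proposition~\ref{prop:fund} as a black box, whereas the paper pairs $f(\mathcal{A})u$ directly with $v$ and then reruns the estimate from that proof.
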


\begin{proof}
 
 Let \(u\in X\), $v \in X^{*}$,   $\theta \in (\mu,\frac{\pi}{2})$, and \(f\in H^{\infty}(S_{\theta})\). Then
\begin{equation*}
\begin{split}
|\langle f(\mathcal{A})u,v \rangle| &\leq \sum \limits _{k} |\langle q(2^{k}\mathcal{A})f(\mathcal{A})u, q(2^{k+1}\mathcal{A}^{*})v \rangle|\\
&\lesssim \Exp\BNorm{\sum_k\radem_k q(2^{k}\mathcal{A}) f(\mathcal{A})u}{X}
\Exp\BNorm{\sum_k\radem_k q(2^{k}\mathcal{A}^*)v}{X^{*}} \lesssim \Exp\BNorm{\sum_k\radem_k q(2^{k}\mathcal{A}) f(\mathcal{A})u}{X} \|v\|_{X^{*}}.
\end{split}
\end{equation*}
The proof is then concluded as in Proposition \ref{prop:fund}.
\end{proof}

\section{$L^p$ theory for operators with variable coefficients}
\label{sect:proofs}


In this section, we give the proofs of the results stated in Subsection \ref{ss:pert}, and some variations.
The core result, Theorem \ref{thm:char}, is a generalization of \cite[Theorem 3.1]{hmp}. The key ingredients of the proof are contained in \cite{hmp}.
Here we indicate where \cite{hmp} needs to be modified, using the results from the preceding sections.
  Let us recall that $\Pi$ denotes an Hodge-Dirac operator with constant coefficients as defined in Definition \ref{def:unpert}, and that 
$\Pi_{B}$ denotes a Hodge-Dirac operator with variable coefficients as defined in Definition \ref{def:pert}.  
We also use the following notation.
\begin{equation*}\begin{split}
  R_t ^{B} &:=(I+it\Pi_{B})^{-1} =r(t\Pi_{B}),\\
  P_t ^{B} &:=(I+t^2\Pi_{B} ^2)^{-1}= p(t\Pi_{B}),\\
  Q_t ^{B} &:=t\Pi_{B} P_t ^{B}= q(t\Pi_{B}),
  \end{split}\end{equation*}
  and denote by $R_{t},P_{t},Q_{t}$ the corresponding functions of $\Pi$.  


\begin{theorem}\label{thm:char}
Let $1\leq p_1<p_2\leq \infty$, 
and let \(\Pi_{B}\) be an R-bisectorial 
Hodge-Dirac operator with variable coefficients in $L^p$ which Hodge-decomposes $L^p$ for all \(p \in (p_{1},p_{2})\). Then
\begin{equation}\label{eq:onesidedQE}
  \Exp\BNorm{\sum_k\radem_k Q_{2^k}^B u}{p}
  \lesssim  \Norm{u}{p},\qquad
  u\in\overline{\range_{p}(\Gamma)},
\end{equation}
and
\begin{equation}\label{eq:dualonesidedQE}
  \Exp\BNorm{\sum_k\radem_k (Q_{2^k}^B)^* v}{p'}
  \lesssim  \Norm{v}{p'},\qquad
  v\in\overline{\range_{p'}(\Gamma^*)}.
\end{equation}
\end{theorem}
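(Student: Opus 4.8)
The plan is to reduce the estimate \eqref{eq:onesidedQE} to a quadratic estimate that is, modulo the harmonic-analytic machinery of \cite{hmp}, already available in the constant-coefficient case, and then to transfer it to $\Pi_B$ using the Hodge decomposition together with the coercivity condition \eqref{coercond}. First I would use that $u\in\overline{\range_p(\Gamma)}$ and $\Pi_B$ Hodge-decomposes $L^p$ to split the analysis according to the decomposition $L^p=\kernel_p(\Pi_B)\oplus\overline{\range_p(\Gamma)}\oplus\overline{\range_p(\rGamma_B)}$; the point is that on $\overline{\range_p(\Gamma)}$ one has $Q_{2^k}^B u = 2^k\Pi_B P_{2^k}^B u = 2^k\Gamma P_{2^k}^B u + 2^k\rGamma_B P_{2^k}^B u$, and a further algebraic manipulation using $\rGamma_B=B_1\rGamma B_2$, the Hodge projections, and their intertwining with the resolvents of $\Pi_B$ (exactly as in \cite[Section 5]{hmp}) should rewrite this in a form to which the constant-coefficient square function estimate for $\Pi$ can be applied after conjugating by the bounded multiplication operators $B_1,B_2$ and the bounded Hodge projections (whose boundedness comes from R-bisectoriality + Hodge decomposition, cf.\ Remark~\ref{rem:interpolation}).

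The heart of the argument is the local/global (Carleson-type) splitting of the square function from \cite{hmp}: one writes $Q_{2^k}^B = Q_{2^k}^B(I - P_{2^k}) + Q_{2^k}^B P_{2^k}$ (or the analogous splitting with a suitable test-function resolution of the identity), where $P_{2^k}$, $Q_{2^k}$ are the \emph{constant-coefficient} operators attached to $\Pi$. The "principal part" — where $Q_{2^k}^B$ is essentially replaced by its action on slowly varying functions — is handled by the Carleson embedding theorem together with the stopping-time/$\mathcal R$-boundedness arguments of \cite{hmp}, and this is where the Rademacher maximal function (RMF property of $L^p$) enters; I would invoke this part essentially verbatim. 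The "off-diagonal" or "error" part, where one exploits cancellation between $\Pi_B$ and $\Pi$, is where the results of the preceding sections must be substituted for their $L^2$ counterparts in \cite{hmp}: in particular Proposition~\ref{prop:transQuadEst} (the matrix-valued analogue of Bourgain's lemma, controlling translated square functions by $(1+\log_+|z|)$) replaces the corresponding $L^2$ estimate, and the Mikhlin bounds of Section~\ref{sec:symbols} (Propositions~\ref{lem:resolvEst}, \ref{prop:fcPi}, Corollary~\ref{cor:sigmaLambda}) give the needed off-diagonal decay of the kernels of $Q_{2^k}^B\Phi_j$ and related operators on $L^p$.

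The estimate \eqref{eq:dualonesidedQE} follows by the same argument applied to the adjoint operator: by the Proposition following Definition~\ref{def:pert}, $\Pi_B^*=\rGamma^*+\Gamma_B^*$ with $\Gamma_B^*=B_2^*\Gamma^* B_1^*$ is itself a Hodge-Dirac operator with variable coefficients in $L^{p'}$, it is R-bisectorial in $L^{p'}$ (adjoints of R-bisectorial operators on reflexive spaces are R-bisectorial), and it Hodge-decomposes $L^{p'}$ by Lemma~\ref{lem:dual}; moreover $(Q_{2^k}^B)^* = 2^k\Pi_B^* p(2^k\Pi_B^*) = q(2^k\Pi_B^*)$, and $\overline{\range_{p'}(\Gamma^*)}$ plays for $\Pi_B^*$ the role that $\overline{\range_p(\rGamma_B)}$ — hence, after the isomorphism $B_1$, $\overline{\range_p(\Gamma)}$ in the dual pairing — played above, so \eqref{eq:onesidedQE} for $\Pi_B^*$ on the appropriate range subspace is exactly \eqref{eq:dualonesidedQE}. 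The main obstacle I anticipate is bookkeeping: making sure that every place in \cite{hmp} where self-adjointness of $\Pi_B$ or Hilbert-space orthogonality was used is replaced by the correct combination of (i) boundedness of the Hodge projections (Remark~\ref{rem:interpolation}), (ii) the isomorphism property of $B_1$ on $\overline{\range_p(\rGamma)}$ from \eqref{coercond}, and (iii) the intertwining identities between resolvents of $\Pi_B$ and of $\Pi$ restricted to the relevant Hodge subspaces — and that the logarithmic loss in Proposition~\ref{prop:transQuadEst} is summable against the off-diagonal decay, just as the $\sum_m (1+|m|)2^{-|m|}$ series converges in the proof of Proposition~\ref{prop:fund}.
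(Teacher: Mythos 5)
Your overall plan matches the paper's: split $Q_{2^k}^B = Q_{2^k}^B(I-P_{2^k}) + Q_{2^k}^B P_{2^k}$ with the \emph{constant-coefficient} $P_{2^k}$, treat the principal part/averaging part by the Carleson and stopping-time arguments of \cite{hmp} (invoked essentially verbatim), and handle the error terms using off-diagonal R-bounds, the $L^p$ coercivity (Proposition~\ref{prop:coer}), the translated square-function bound (Proposition~\ref{prop:transQuadEst}, replacing Bourgain's $L^2$ lemma), and the Mikhlin estimates of Section~\ref{sec:symbols}. The dual estimate by applying the same argument to $\Pi_B^*$ is also the paper's approach. Two slips are worth flagging.

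First, your formula for the adjoint is wrong: you wrote $\Pi_B^* = \rGamma^* + \Gamma_B^*$ with $\Gamma_B^* = B_2^*\Gamma^*B_1^*$, but in fact $\Pi_B^* = \Gamma^* + \rGamma_B^*$ with $\rGamma_B^* = B_2^*\rGamma^*B_1^*$ (cf.\ the Proposition after Definition~\ref{def:pert}). With the correct form, $\Gamma^*$ is the \emph{constant-coefficient} nilpotent part of $\Pi_B^*$, so \eqref{eq:onesidedQE} applied to $\Pi_B^*$ (using Lemma~\ref{lem:dual} for the Hodge decomposition and the fact that R-bisectoriality passes to adjoints in $L^{p'}$) directly gives \eqref{eq:dualonesidedQE} on $\overline{\range_{p'}(\Gamma^*)}$ --- there is no need for the detour through $\rGamma_B$ and $B_1$ you sketch. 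With your (incorrect) form, applying \eqref{eq:onesidedQE} to the adjoint would have given an estimate on $\overline{\range_{p'}(\rGamma^*)}$, which is not what is wanted.

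Second, your first-paragraph suggestion --- to write $Q_{2^k}^B u = 2^k\Gamma P_{2^k}^B u + 2^k\rGamma_B P_{2^k}^B u$ and then ``conjugate by $B_1,B_2$'' to reduce to the constant-coefficient square function --- does not work as stated: $\Pi_B$ appears in every power of the Neumann series defining $P_{2^k}^B$, so a one-shot conjugation by multiplication operators cannot turn $P_{2^k}^B$ into $P_{2^k}$. The high-frequency term \eqref{eq:highfreq} is instead handled via the intertwining identity
$Q_{2^k}^B(I-P_{2^k})u = (I-P_{2^k}^B)\,\proj_{\rGammaSmall_B}\,Q_{2^k}u$
(valid for $u\in\overline{\range_p(\Gamma)}$, using the Hodge decomposition), which puts \emph{all} of the variable-coefficient dependence into R-bounded operators acting on $Q_{2^k}u$. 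Your switch to the $Q_{2^k}^B(I-P_{2^k}) + Q_{2^k}^BP_{2^k}$ splitting later in the proposal is the step that actually makes the argument go through.
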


\begin{proof}
We first prove \eqref{eq:onesidedQE}.  
Let us recall the following notation from \cite{hmp}.
Let
\begin{equation*}
  \triangle := \bigcup_{k \in \Z}\triangle_{2^k},\qquad
  \triangle_{2^k}:=\big\{2^k([0,1)^n+m):m\in\Z^n\big\}.
\end{equation*}
denote a system of dyadic cubes, and
\begin{equation*}
  A_{2^k} u(x):=\ave{u}_Q 
  :=\frac{1}{\abs{Q}}\int_Q u(y)\ud y,\qquad
  x\in Q\in\triangle_{2^k}.
\end{equation*}
be the corresponding conditional expectation projections.
Let
\begin{equation}\label{eq:prPart}
  \gamma_{2^k}(x)w:=Q_{2^k}^{B}(w)(x)
  :=\sum_{Q\in\triangle_{2^k}}Q_{2^k}^{B}(w1_{Q})(x),
  \quad x\in\R^n,\ w\in \C^{N}.
\end{equation}
denote the   \emph{principal part}   of $Q_{2^{k}}^{B}$,   which we also identify with the corresponding pointwise multiplication operator.

The proof of (\ref{eq:onesidedQE}) now divides into the following four estimates: 
\begin{equation}
\label{eq:highfreq}
\Exp\BNorm{\sum_k\radem_k Q_{2^k}^B (I-P_{2^k})u}{p}
\lesssim  \Norm{u}{p}
,\qquad
  u\in\overline{\range_{p}(\Gamma)}.
\end{equation}
\begin{equation}
\label{eq:Qt1}
\Exp\BNorm{\sum_k\radem_k (Q_{2^k}^B - \gamma_{2^k}A_{2^k})P_{2^k}u}{p}
\lesssim  \Norm{u}{p}
,\qquad
  u\in\overline{\range_{p}(\Gamma)}.
\end{equation}
\begin{equation}
\label{eq:dyadicvscont}
\Exp\BNorm{\sum_k\radem_k  \gamma_{2^k}A_{2^k}(I-P_{2^k})u}{p}
\lesssim  \Norm{u}{p}
,\qquad
  u\in\overline{\range_{p}(\Gamma)}.
\end{equation}
\begin{equation}
\label{eq:carleson}
\Exp\BNorm{\sum_k\radem_k \gamma_{2^k}A_{2^k}u}{p}
\lesssim  \Norm{u}{p}
,\qquad
  u\in\overline{\range_{p}(\Gamma)}.
\end{equation}

  Inequality   (\ref{eq:highfreq}) follows from the fact that $\Pi_B$ Hodge-decomposes $L^p$, and   from   the R-bisectorial\-ity of $\Pi_B$:
  as in \cite[Lemma 6.3]{hmp},   denoting by   $\proj_{\rGammaSmall_B}$   the projection onto $\overline{\range_p(\rGamma_{B})}$   in the Hodge-decomposition,   we have
$$
Q_{2^{k}} ^{B} (I-P_{2^k})u = (I-P_{2^k} ^{B})\proj_{\rGammaSmall_B}Q_{2^k}u
,\qquad
  u\in\overline{\range_{p}(\Gamma)},
$$
and
\begin{equation}\label{eq:highFreqProof}
  \Exp\BNorm{\sum_k\radem_k (I-P_{2^k} ^{B})\proj_{\rGammaSmall_B}Q_{2^k}u}{p}
  \lesssim \Exp\BNorm{\sum_k\radem_k Q_{2^k}u}{p}
  \lesssim  \Norm{u}{p},\qquad
  u\in  L^p, 
\end{equation}
where the last inequality follows from Theorem \ref{thm:unpert}.

To prove \eqref{eq:Qt1} and \eqref{eq:dyadicvscont} we first note that
  commutators of the form $[\eta I, \Gamma]$ are multiplication operators by an $L^{\infty}$ function bounded by 
$\Norm{\nabla \eta}{\infty}$. Then the R-bisectoriality of $\Pi_{B}$ and \cite[Proposition 6.4]{hmp} give the following \emph{off-diagonal R-bounds}:  
for every $M \in \N$, every Borel subsets $E_k,\
F_k\subset\R^n$, every $u_k\in L^p$, and every
$(t_k)_{k\in\Z}\subseteq\{2^j\}_{j\in\Z}$ such that
$\dist(E_k,F_k)/t_k>\varrho$ for some $\varrho>0$ and all $k\in\Z$,
there holds
\begin{equation}\label{eq:offDiagonal}\begin{split}
  \Exp &\BNorm{\sum_k\radem_k1_{E_k}
     Q^B_{  t_k } 1_{F_k}u_k}{p} 
  \lesssim(1+\varrho)^{-M}\Exp\BNorm{\sum_k\radem_k1_{F_k}u_k}{p}.
\end{split}\end{equation}
This, in turn, gives that   the family $(\gamma_{2^{k}}A_{2^k})_{k\in\Z}$   is R-bounded exactly as in \cite[Lemma 6.5]{hmp}.

Now let us prove (\ref{eq:Qt1}). This is similar to \cite[Lemma 6.6]{hmp} but, since a few modifications need to be made, we include the proof.
Letting $v_{k} = P_{2^k}u$, and using the off-diagonal R-bounds, we have:
\begin{equation}
\begin{split}
  \Exp &\BNorm{\sum_k\radem_k(Q_{2^k} ^{B}-\gamma_{2^k}A_{2^k})v_k}{p} \\
  &=\Exp\BNorm{\sum_k\radem_k\sum_{Q\in\triangle_{2^k}}1_Q
     Q_{2^k} ^{B}\big(v_k-\ave{v_k}_Q)}{p} \\
  &\leq\sum_{m\in\Z^n}\Exp\BNorm{\sum_k\radem_k\sum_{Q\in\triangle_{2^k}}
     1_Q Q_{2^k} ^{B}\big(1_{Q-2^k m}(v_k-\ave{v_k}_Q)\big)}{p} \\
  &\lesssim\sum_{m\in\Z^n}(1+\abs{m})^{-M}\,
   \Exp\BNorm{\sum_k\radem_k\sum_{Q\in\triangle_{2^k}}
     1_Q(v_k-\ave{v_k}_{Q+2^k m})}{p}.
\end{split}\end{equation}
A version of Poincar\'e's inequality \cite[Proposition 4.1]{hmp} allows to majorize the last factor by  
\begin{equation*}
  \int_{[-1,1]^n}\int_0^1
    \Exp\BNorm{\sum_k\radem_k 2^k(m+z) \cdot \nabla\,\tau_{t2^k(m+z)}
     P_{2^k} u}{p}
    \ud t \ud z. \\
\end{equation*} 
We then use Propositions \ref{prop:coer} and \ref{prop:transQuadEst} to estimate this term by
\begin{equation*}
  (1+|m|)(1+\log_{+}|m|)\|u\|_{p},
\end{equation*}
and the proof is thus completed by picking $M$ large enough.

To prove \eqref{eq:dyadicvscont}, we first use the R-boundedness of $(\gamma_{2^k}A_{2^k})$ and the idempotence of $A_{2^k}$. We thus have to show   that   
\begin{equation*}
  \Exp\BNorm{\sum_k\radem_k A_{2^k}(I-P_{2^k})u}{p}
  \lesssim  \Norm{u}{p},\qquad u\in\overline{\range_p(\Gamma)}.
\end{equation*} 
This is essentially like \cite[Proposition 5.5]{hmp}. We indicate the beginning of the argument, where the operator-theoretic Lemma~\ref{lem:identity} replaces a Fourier multiplier trick used in \cite{hmp}. Indeed, with $u\in\overline{\range_p(\Gamma)}\subseteq\overline{\range_p(\Pi)}$, we have
\begin{equation*}
\begin{split}
  \Exp\BNorm{\sum_k\radem_k A_{2^k}(I-P_{2^k})u}{p}
  &\eqsim\Exp\BNorm{\sum_k\radem_k A_{2^k}(I-P_{2^k})\sum_j Q_{2^j}Q_{2^{j+1}}u}{p} \\
  &\leq\sum_m\Exp\BNorm{\sum_k\radem_k \big(A_{2^k}(I-P_{2^k})Q_{2^{k+m}}\big)Q_{2^{k+m+1}}u}{p}.
\end{split}
\end{equation*} 
Thanks to the second estimate in \eqref{eq:highFreqProof}, it suffices to show that the operator family
\begin{equation*}
  \big\{A_{2^k}(I-P_{2^k})Q_{2^{k+m}}:k\in\Z\big\}\subset\bddlin(L^p)
\end{equation*}
is R-bounded with R-bound $C2^{-\delta\abs{m}}$ for some $\delta>0$. This is done by repeating the argument of \cite[Proposition 5.5]{hmp}.

Finally, the proof of (\ref{eq:carleson}) is done exactly as in   \cite[Theorem 8.2 and Proposition 9.1]{hmp}.
Notice that this last part is the only place where we need the assumptions for $p$ in an open interval $(p_1,p_2)$; all the other estimates work for a fixed value of $p$.

This completes the proof of (\ref{eq:onesidedQE}).

Let us turn our attention to (\ref{eq:dualonesidedQE}).  By Lemma \ref{lem:dual}, we have that 
${\Pi_B}^* =\Gamma^* +{B_2}^*{\rGamma}^*{B_1}^*$ is a Hodge-Dirac operator with variable coefficients in $L^{p'}$ which Hodge-decomposes $L^{p'}$. It is also R-bisectorial, as this property is preserved under duality (see \cite[Lemma 3.1]{kw}). So the proof of (\ref{eq:onesidedQE}) adapts to give the quadratic estimate (\ref{eq:dualonesidedQE}) involving $(Q^B_t)^*= t{\Pi_B}^*(I+(t{\Pi_B}^*)^2)^{-1}$.
\end{proof} 

We now prove Theorem \ref{thm:base} as a corollary.

\begin{corollary}\label{cor:char}
Let \(1\leq p_{1}<p_{2} \leq \infty\),   $\mu\in(\omega,\pi/2)$,  
and let \(\Pi_{B}\) be a Hodge-Dirac operator with variable coefficients in $L^p$ which Hodge-decomposes \(L^{p}\) for all \(p \in (p_{1},p_{2})\).
Assume also that $\underline{\Pi}_B$ is a Hodge-Dirac operator with variable coefficients in $L^p$.
Then \(\Pi_{B}\) has a bounded \(H^{\infty}\) functional calculus (with angle $\mu$) in \(L^{p}(\R^{n};\C^{N})\) for all \(p \in (p_{1},p_{2})\) if and only if it is \(R\)-bisectorial (with angle $\mu$) in \(L^{p}(\R^{n};\C^{N})\)  for all \(p \in (p_{1},p_{2})\).
\end{corollary}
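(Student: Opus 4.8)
The plan: one implication is immediate. If $\Pi_{B}$ has a bounded $H^{\infty}$ functional calculus with angle $\mu$ in $L^{p}$, then by Remark~\ref{fcRsect} (that is, \cite[Theorem~5.3]{kw}, applied with $L^{p}$ viewed as a closed subspace of itself) it is $R$-bisectorial with angle $\mu$ in $L^{p}$; doing this for each $p\in(p_{1},p_{2})$ settles that direction. For the converse I would combine Theorem~\ref{thm:char} with the abstract Proposition~\ref{prop:funddual}.

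So assume $\Pi_{B}$ is $R$-bisectorial (angle $\mu$) in $L^{q}$ for every $q\in(p_{1},p_{2})$. By Lemma~\ref{lem:tilde}, $\underline{\Pi}_{B}$ is then also $R$-bisectorial and Hodge-decomposes $L^{q}$ for every $q\in(p_{1},p_{2})$, so Theorem~\ref{thm:char}, applied once to $\Pi_{B}$ and once to $\underline{\Pi}_{B}$, gives, for all $q\in(p_{1},p_{2})$,
\begin{align*}
  \Exp\BNorm{\sum_{k}\radem_{k}Q^{B}_{2^{k}}u}{q}&\lesssim\Norm{u}{q},\qquad u\in\overline{\range_{q}(\Gamma)},\\
  \Exp\BNorm{\sum_{k}\radem_{k}q(2^{k}\underline{\Pi}_{B})w}{q}&\lesssim\Norm{w}{q},\qquad w\in\overline{\range_{q}(\rGamma)},
\end{align*}
together with the two analogous dual estimates on $\overline{\range_{q'}(\Gamma^{*})}$ and $\overline{\range_{q'}(\rGamma^{*})}$. (The interval hypothesis is genuinely used here, inside Theorem~\ref{thm:char}.) The crucial step is to upgrade the first estimate to the full Hodge range $\overline{\range_{q}(\Pi_{B})}=\overline{\range_{q}(\Gamma)}\oplus\overline{\range_{q}(\rGamma_{B})}$. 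For this I would establish the algebraic intertwining
\begin{equation*}
  B_{2}\,Q^{B}_{t}\,B_{1}=q(t\underline{\Pi}_{B})\qquad\text{on }\overline{\range_{q}(\rGamma)};
\end{equation*}
its proof uses $\rGamma B_{2}B_{1}\rGamma=0$ (whence $\Pi_{B}=\Gamma$ on $\overline{\range_{q}(\rGamma_{B})}$ and $\underline{\Pi}_{B}=B_{2}\Gamma B_{1}$ on $\overline{\range_{q}(\rGamma)}$), the invariance of the three Hodge subspaces under the resolvents of $\Pi_{B}$ and $\underline{\Pi}_{B}$, and the identity $\Pi_{B}^{2}B_{1}=B_{1}\underline{\Pi}_{B}^{2}$ on $\overline{\range_{q}(\rGamma)}$, which yields $p(t\Pi_{B})B_{1}=B_{1}p(t\underline{\Pi}_{B})$ there. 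Since $B_{1}\colon\overline{\range_{q}(\rGamma)}\to\overline{\range_{q}(\rGamma_{B})}$ and $B_{2}\colon\overline{\range_{q}(\Gamma)}\to\overline{\range_{q}(B_{2}\Gamma B_{1})}$ are isomorphisms by \eqref{coercond} for $\Pi_{B}$, resp.\ $\underline{\Pi}_{B}$, writing $u=B_{1}w\in\overline{\range_{q}(\rGamma_{B})}$ and composing with $B_{2}^{-1}$ transfers the $\underline{\Pi}_{B}$-estimate on $\overline{\range_{q}(\rGamma)}$ into the sought bound for $\Pi_{B}$ on $\overline{\range_{q}(\rGamma_{B})}$; together with the bounded Hodge projections this produces $\Exp\Norm{\sum_{k}\radem_{k}Q^{B}_{2^{k}}u}{q}\lesssim\Norm{u}{q}$ for all $u\in\overline{\range_{q}(\Pi_{B})}$ and all $q\in(p_{1},p_{2})$.

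This assertion --- valid for any Hodge-Dirac operator with variable coefficients which is $R$-bisectorial and Hodge-decomposes $L^{q}$ on an open interval of $q$'s and whose companion is again of this type --- applies in particular to $\Pi_{B}^{*}$ on $L^{q}$, $q\in(p_{2}',p_{1}')$, since by Lemma~\ref{lem:dual} and the stability of $R$-bisectoriality under adjoints (\cite[Lemma~3.1]{kw}) $\Pi_{B}^{*}$ is such an operator, with $\underline{\Pi_{B}^{*}}=\underline{\Pi}_{B}^{*}$. Hence, fixing $p\in(p_{1},p_{2})$ (so $p'\in(p_{2}',p_{1}')$), we have both $\Exp\Norm{\sum_{k}\radem_{k}Q^{B}_{2^{k}}u}{p}\lesssim\Norm{u}{p}$ for $u\in\overline{\range_{p}(\Pi_{B})}$ and $\Exp\Norm{\sum_{k}\radem_{k}(Q^{B}_{2^{k}})^{*}v}{p'}\lesssim\Norm{v}{p'}$ for $v\in\overline{\range_{p'}(\Pi_{B}^{*})}$. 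These are precisely the hypotheses of Proposition~\ref{prop:funddual} for the restriction $\mathcal{A}:=\Pi_{B}|_{\overline{\range_{p}(\Pi_{B})}}$, acting on the reflexive space $X:=\overline{\range_{p}(\Pi_{B})}$ with $X^{*}\cong\overline{\range_{p'}(\Pi_{B}^{*})}$ (Lemma~\ref{lem:HodgeDuality}) and $\mathcal{A}^{*}=\Pi_{B}^{*}|_{X^{*}}$; moreover $\mathcal{A}$ is $R$-bisectorial with angle $\mu$ because its resolvents are restrictions of those of $\Pi_{B}$. Thus $\mathcal{A}$ has a bounded $H^{\infty}$ functional calculus with angle $\mu$ on $X$; and since $\psi(0)=0$ for $\psi\in H_{0}^{\infty}(S_{\nu})$, one has $\psi(\Pi_{B})=\psi(\mathcal{A})(I-\proj_{0})$, with $\proj_{0}$ the bounded Hodge projection onto $\kernel_{p}(\Pi_{B})$, so the bound passes to $L^{p}$. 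As $p\in(p_{1},p_{2})$ was arbitrary, this is the claim.

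The main obstacle is the middle step: Theorem~\ref{thm:char} only controls the range of the constant-coefficient summand $\Gamma$, so reaching $\overline{\range_{q}(\rGamma_{B})}$ forces one to bring in the companion $\underline{\Pi}_{B}$ and to verify the similarity $B_{2}Q^{B}_{t}B_{1}=q(t\underline{\Pi}_{B})$ on $\overline{\range_{q}(\rGamma)}$ with due care about the domains involved and about which Hodge subspace each resolvent factor inhabits. Everything else is assembling the results already in place.
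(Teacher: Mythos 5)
Your proof is correct and follows essentially the same route as the paper's: apply Theorem~\ref{thm:char} to both \(\Pi_B\) and \(\underline{\Pi}_B\), use the intertwining \(B_2Q^B_tB_1=q(t\underline{\Pi}_B)\) on \(\overline{\range(\rGamma)}\) together with \eqref{coercond} to transfer the quadratic estimate to \(\overline{\range_p(\rGamma_B)}\), treat the dual estimate by passing to \(\Pi_B^*\), and conclude via Proposition~\ref{prop:funddual}. The only cosmetic deviation is that the paper extends the quadratic estimate to all of \(L^p\) (using that \(Q^B_t\) annihilates \(\kernel_p(\Pi_B)\)) and then applies Proposition~\ref{prop:funddual} with \(X=L^p\), whereas you restrict the operator to \(X=\overline{\range_p(\Pi_B)}\) and recover the \(L^p\) bound at the end.
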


\begin{proof}
The fact that a bounded $H^{\infty}$ functional calculus implies R-bisectoriality is a general property (see Remark \ref{fcRsect}).
To prove the other direction,   assume   that $\Pi_{B}$ is R-bisectorial on \(L^{p}(\R^{n};\C^{N})\)  for all \(p \in (p_{1},p_{2})\).
By Theorem \ref{thm:char}, we have that
\begin{equation}\label{eq:QEonRGamma}
  \Exp\BNorm{\sum_k\radem_k Q_{2^k}^B u}{p}
  \lesssim  \Norm{u}{p},\qquad
   u\in\overline{\range_{p}(\Gamma)}.
\end{equation}
Moreover, since $\underline{\Pi}_B$ also satisfies the assumptions of Theorem \ref{thm:char} 
(using Lemma \ref{lem:tilde}), we have that
\begin{equation}\label{eq:underlineQE}
  \Exp\BNorm{\sum_k\radem_k 2^{k}\underline{\Pi}_B(I+(2^{k}\underline{\Pi}_B)^{2})^{-1} u}{p}
  \lesssim  \Norm{u}{p},\qquad
  u\in\overline{\range_{p}(\rGamma)}.
\end{equation}
 
For $u\in\overline{\range_p(\rGamma)}$, there holds
\begin{equation*}
\begin{split}
  & 2^{k}\underline{\Pi}_B(I+(2^{k}\underline{\Pi}_B)^{2})^{-1} u
  =2^{k}B_{2}\Gamma B_{1}(I+(2^{k}\underline{\Pi}_B)^{2})^{-1} u \\
  & =2^k B_2\Gamma (I+(2^{k}\Pi_{B})^{2})^{-1} B_{1} u
  =2^k B_2\Pi_B (I+(2^{k}\Pi_{B})^{2})^{-1} B_{1} u.
\end{split}
\end{equation*}
Thus by \eqref{coercond}, the estimate \eqref{eq:underlineQE} implies
\begin{equation}\label{eq:QEonRrGammaB}
  \Exp\BNorm{\sum_k\radem_k Q_{2^{k}}^B u}{p}
  \lesssim  \Norm{u}{p},\qquad
  u\in\overline{\range_{p}(\rGamma_B)}.
\end{equation}
Combining \eqref{eq:QEonRGamma} and \eqref{eq:QEonRrGammaB} with the Hodge-decomposition and the obvious fact that $Q_{2^k}^B$ annihilates $\kernel_p(\Pi_B)$, we arrive at  
\begin{equation*}
  \Exp\BNorm{\sum_k\radem_k Q_{2^k}^B u}{p}
  \lesssim  \Norm{u}{p},\qquad
  u\in  L^p. 
\end{equation*}

In the same way,  one gets   the dual estimate
\begin{equation*}
  \Exp\BNorm{\sum_k\radem_k (Q_{2^k}^B)^{*} u}{p'}
  \lesssim  \Norm{u}{p'},\qquad u\in L^{p'}, 
\end{equation*}
where $p'$ denotes the   conjugate   exponent of $p$.
The   functional calculus   then follows from Proposition \ref{prop:funddual}.
\end{proof}

\begin{corollary}\label{cor:pert}
Let \(1\leq p_{1}<p_{2} \leq \infty\), and let \(\Pi_{A}\) be a Hodge-Dirac operator with variable coefficients, which is R-bisectorial in $L^p$ and Hodge-decomposes \(L^p\) for all \(p \in (p_{1},p_{2})\).
Then for each $p\in(p_1,p_2)$, there exists $\delta=\delta_p>0$ such that, if \(\Pi_{B}\) and $\underline{\Pi}_B$  are Hodge-Dirac operators with variable coefficients such that $\Norm{B_{1}-A_{1}}{\infty}+\Norm{B_{2}-A_{2}}{\infty} < \delta$,   then
\(\Pi_{B}\) has an \(H^{\infty}\) functional calculus in \(L^p\) and Hodge-decomposes \(L^p\).
\end{corollary}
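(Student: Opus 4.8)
\emph{Strategy.}
The plan is to deduce the corollary from the characterization in Corollary~\ref{cor:char}, after first localizing to a compact sub‑interval of exponents. Fix $p\in(p_1,p_2)$ and choose $q_1,q_2$ with $p_1<q_1<p<q_2<p_2$, so that $1<q_1<q_2<\infty$. I will produce $\delta=\delta_p>0$ such that, whenever $\Pi_B$ and $\underline{\Pi}_B$ are Hodge‑Dirac operators with variable coefficients and $\Norm{B_1-A_1}{\infty}+\Norm{B_2-A_2}{\infty}<\delta$, the operator $\Pi_B$ is $R$‑bisectorial in $L^{q_1}$ and in $L^{q_2}$ and Hodge‑decomposes both spaces. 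Granting this, Remark~\ref{rem:interpolation} propagates both properties to every $q\in(q_1,q_2)$; since $\underline{\Pi}_B$ is also a Hodge‑Dirac operator with variable coefficients, Corollary~\ref{cor:char}, applied with the interval $(q_1,q_2)$ in place of $(p_1,p_2)$, then shows that $\Pi_B$ has a bounded $H^\infty$ functional calculus in $L^q$ for every $q\in(q_1,q_2)$. Specializing to $q=p$ gives the assertion, the Hodge decomposition of $L^p$ being part of what has been established.

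\emph{The Hodge decomposition.}
That $\Pi_B$ Hodge‑decomposes $L^{q_1}$ and $L^{q_2}$ for $\delta$ small is exactly Proposition~\ref{prop:Hodgepert}: since $\Pi_A$ is a Hodge‑Dirac operator with variable coefficients that Hodge‑decomposes each $L^{q_i}$, there is $\delta_1>0$ for which $\Norm{B_1-A_1}{\infty}+\Norm{B_2-A_2}{\infty}<\delta_1$ forces the same for $\Pi_B$, with the three Hodge projections $\proj^B_0,\proj^B_{\Gamma},\proj^B_{\rGammaSmall_B}$ within $O(\delta)$ of those of $\Pi_A$. (Shrinking $\delta_1$ further if needed, the coercivity conditions \eqref{coercond} for $B$ at $q_i$ and $q_i'$ follow from those for $A$ by the triangle inequality, so that $\Pi_B$ — whose nilpotence condition \eqref{nilpcond} is part of the hypothesis — is genuinely a Hodge‑Dirac operator with variable coefficients in $L^{q_i}$, and likewise for $\underline{\Pi}_B$.)

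\emph{The $R$‑bisectoriality step.}
This is the perturbation‑stability of $R$‑bisectoriality announced after Theorem~\ref{thm:base}, and it is the delicate point. One cannot feed $\Pi_B-\Pi_A=\rGamma_B-\rGamma_A$ into the second resolvent identity, because this difference is not $\Pi_A$‑bounded: multiplication by $B_2-A_2$ does not map $\domain(\rGamma)$ into itself, so $(\rGamma_B-\rGamma_A)(I+it\Pi_A)^{-1}$ need not even be densely defined. Instead one exploits the Hodge decomposition of $L^{q_i}$ attached to $\Pi_B$: on $\kernel_{q_i}(\Pi_B)$ the resolvent $(I+it\Pi_B)^{-1}$ is the identity, while on $\overline{\range_{q_i}(\Gamma)}$ and $\overline{\range_{q_i}(\rGamma_B)}$ the square $\Pi_B^2$ acts as $\Gamma\rGamma_B$ and $\rGamma_B\Gamma$ respectively; using the isomorphism $B_1\colon\overline{\range_{q_i}(\rGamma)}\to\overline{\range_{q_i}(\rGamma_B)}$ together with the nilpotence relations $\rGamma B_2B_1\rGamma=0$ and $\Gamma B_1B_2\Gamma=0$, one rewrites $\Pi_B$ restricted to each summand purely in terms of $\Gamma$, $\rGamma$ and the matrices $B_1,B_2$, and compares with the corresponding object built from $A_1,A_2$. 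The continuity estimates of Proposition~\ref{prop:Hodgepert} and Corollary~\ref{cor:Hodgepert} — for the Hodge projections and for $B_1^{-1}\proj^B_{\rGammaSmall_B}$ — bound the resulting differences by $O(\delta)$ in $\bddlin(L^{q_i})$; since $\Pi_A$ is $R$‑bisectorial in $L^{q_i}$, a Neumann‑series argument on top of these $O(\delta)$ comparisons yields the $R$‑boundedness of $\{(I+it\Pi_B)^{-1}:t\in\R\}$, hence the $R$‑bisectoriality of $\Pi_B$, once $\delta$ is small enough.

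The main obstacle is precisely this last step: because the naive resolvent comparison is unavailable, the $R$‑bisectoriality of the perturbed operator has to be read off from the geometry of the Hodge decomposition and its stability under perturbation, rather than from generic operator‑theoretic perturbation theory for (bi)sectorial operators. Everything else is bookkeeping on top of results already in hand — Proposition~\ref{prop:Hodgepert}, Corollary~\ref{cor:Hodgepert}, Remark~\ref{rem:interpolation} and Corollary~\ref{cor:char}.
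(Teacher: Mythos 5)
Your overall architecture matches the paper's: use Proposition~\ref{prop:Hodgepert} to transfer the Hodge decomposition to $\Pi_B$ at two exponents flanking $p$, argue $R$-bisectoriality of $\Pi_B$ there, interpolate via Remark~\ref{rem:interpolation}, and finish with Corollary~\ref{cor:char}. The localization to $q_1<p<q_2$ is equivalent to the paper's choice of $p\pm\varepsilon$, and your observation that the second resolvent identity is unavailable (because $\rGamma_B-\rGamma_A$ is not $\Pi_A$-bounded) is exactly right.

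However, the $R$-bisectoriality step as you have written it is a gap, not a proof. You say one should ``rewrite $\Pi_B$ restricted to each Hodge summand purely in terms of $\Gamma,\rGamma$ and $B_1,B_2$, and compare with the corresponding object built from $A_1,A_2$,'' then invoke a Neumann series ``on top of these $O(\delta)$ comparisons.'' But an additive $O(\delta)$ comparison of the \emph{operators} $\Pi_B$ and $\Pi_A$ does not exist (they are unbounded and the difference is not relatively bounded), and an $O(\delta)$ comparison of the Hodge projections does not by itself translate into a resolvent comparison. What actually makes a Neumann series possible is a \emph{multiplicative} identity. The paper produces it explicitly: writing $B_1\rGamma=(I-T_1)A_1\rGamma$ and $\rGamma B_2=\rGamma A_2(I-T_2)$ with $\Norm{T_i}{}\lesssim\delta$ (as in the proof of Proposition~\ref{prop:Hodgepert}), one has the factorization
\begin{equation*}
  \Pi_B=(I-T_1\proj_{\rGammaSmall_A})\,\Pi_A\,(I-\proj_{\Gamma}T_2),
\end{equation*}
hence
\begin{equation*}
  I+it\Pi_B=(I-T_1\proj_{\rGammaSmall_A})(I+it\Pi_A)(I-\proj_{\Gamma}T_2)
  +\bigl(T_1\proj_{\rGammaSmall_A}+\proj_{\Gamma}T_2\bigr),
\end{equation*}
and one factors out the first product and expands the resulting ``$I+\text{small}$'' in a Neumann series whose terms are $R$-bounded because the resolvents of $\Pi_A$ are. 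This identity is the content your sketch is missing; without it the phrase ``compare with the corresponding object'' does not yield the required $R$-boundedness. (Your appeal to Corollary~\ref{cor:Hodgepert} and to the nilpotence relations is also off-target here: neither is used in the paper's $R$-bisectoriality argument, only the coercivity-based operators $T_1,T_2$ and the Hodge projections of $\Pi_A$ are.) If you insert the factorization above, the rest of your bookkeeping is correct and the proof coincides with the paper's.
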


\begin{proof} Let $p\in (p_{1},p_{2})$. By Proposition \ref{prop:Hodgepert}, we have that, for $\delta$ small enough, $\Pi_B$ Hodge-decomposes $L^p$. We need to show that $\Pi_B$ is R-bisectorial in $L^p$ provided $\delta$ is sufficiently small.
As in the proof of Proposition~\ref{prop:Hodgepert}, let $T_1\in\bddlin(\overline{\range_p(\rGamma_A)},L^p)$ and $T_2\in\bddlin(L^p)$ be operators of norm $\Norm{T_i}{}\lesssim\delta$ such that
\begin{equation*}
  B_1\rGamma=(I-T_1)A_1\rGamma,\qquad
  \rGamma B_2=\rGamma A_2(I-T_2).
\end{equation*}
Then
\begin{equation*}
  \Pi_B
  =\Gamma+B_1\rGamma B_2
  =\Gamma+(I-T_1)A_1\rGamma A_2(I-T_2)
  =(I-T_1\proj_{\rGammaSmall_A})\Pi_A(I-\proj_{\Gamma}T_2),
\end{equation*}
where $\proj_{\Gamma}$ and $\proj_{\rGammaSmall_A}$ are the Hodge-projections associated to $\Pi_A$, onto $\overline{\range_p(\Gamma)}$ and $\overline{\range_p(\rGamma_A)}$, respectively. Hence
\begin{equation*}
\begin{split}
  I+it\Pi_B
  &=(I-T_1\proj_{\rGammaSmall_A})(I+it\Pi_A)(I-\proj_{\Gamma}T_2)
   +(T_1\proj_{\rGammaSmall_A}+\proj_{\Gamma}T_2) \\
  &=(I-T_1\proj_{\rGammaSmall_A})(I+it\Pi_A)(I-\proj_{\Gamma}T_2) \\
  &\qquad\times\big[I+(I-\proj_{\Gamma}T_2)^{-1}(I+it\Pi_A)^{-1}(I-T_1\proj_{\rGammaSmall_A})^{-1}
       (T_1\proj_{\rGammaSmall_A}+\proj_{\Gamma}T_2)],
\end{split}
\end{equation*}
where the inverses involving $T_i$ exist for $\delta$ small enough. Hence $(I+it\Pi_B)^{-1}$ can be expressed as a Neumann series involving powers of the operators $(I+it\Pi_A)^{-1}$, which are R-bounded by assumption, times powers of fixed bounded operators, including $T_1\proj_{\rGammaSmall_A}+\proj_{\Gamma}T_2$ which has norm at most $C\delta$. For $\delta$ small enough, the R-boundedness of $(I+it\Pi_B)^{-1}$ follows from this representation.

Given $\varepsilon \in (p-p_{1},p_{2}-p)$ we thus have that there exists $\delta_{p,\varepsilon}$ such that $\Pi_{B}$ is R-bisectorial in $L^{p-\varepsilon}$ and in $L^{p+\varepsilon}$, and Hodge decomposes $L^{p-\varepsilon}$ and $L^{p+\varepsilon}$.
By interpolation (cf. Remark~\ref{rem:interpolation}), $\Pi_{B}$ is R-bisectorial in $L^{\tilde{p}}$ and Hodge decomposes $L^{\tilde{p}}$ for all $\tilde{p} \in (p-\varepsilon,p+\varepsilon)$.

Now the conditions of Corollary~\ref{cor:char} are verified for the operators $\Pi_B$ and $\underline{\Pi}_B$, so the mentioned result implies that $\Pi_B$ has a bounded $H^{\infty}$ functional calculus in $L^p$, as claimed.
\end{proof}
 
With potential applications to boundary value problems in mind (see \cite{aam}), we conclude this section with the following special case. This proof is essentially the same as in the $L^2$ case \cite[Theorem 3.1]{akm}.  

\begin{corollary}\label{cor:DB}
  Let \(1\leq p_{1}<p_{2} \leq \infty\).  
Let $D = -i\sum_{j=1}^n\hat{D}_j\partial_j$ be a first order differential operator with $\hat{D}_{j} \in \mathscr{L}(\C^{N})$,
and $A \in L^{\infty}(\R^{n};\mathscr{L}(\C^{N}))$ be such that
\begin{equation*}\tag{H1}\label{H1}
\begin{split}
  \abs{\xi}\abs{e}\lesssim\abs{\hat{D}(\xi)e}\quad &\text{for all}\quad e\in\range(\hat{D}(\xi)),\quad   \text{and}\\     
    \sigma(\hat{D}(\xi))\subseteq S_{\omega}\quad
  &\text{for some}\quad \omega \in (0,\frac{\pi}{2})\quad \text{and all}\quad \xi \in \R^{n},
\end{split}
\end{equation*}
\begin{equation*}\tag{H2}\label{H2}
     \Norm{u}{p} \lesssim \Norm{Au}{p}\quad \text{for all}\quad u \in \range_p(D),
\end{equation*}
\begin{equation*}\tag{H3}\label{H3}
  \Norm{u}{p'} \lesssim \Norm{A^{*}u}{p'}\quad \text{for all}\quad   u \in \range_{p'}(D^{*}),  
\end{equation*}
for all $p\in(p_1,p_2)$. Then we have the following:  
\begin{enumerate}
\item\label{it:DAchar}
  The operator   $DA$ has an $H^{\infty}$ functional calculus (with angle $\mu$) in $L^{p}$ for all $p \in (p_{1},p_{2})$ if and only if it is R-bisectorial (with angle $\mu$) in $L^{p}$ for all $p \in (p_{1},p_{2})$.
\item\label{it:DApert}
  If the equivalent conditions of (1) hold, then for each $p\in(p_1,p_2)$, there exists $\delta=\delta_p>0$ such that, if another $\tilde{A} \in L^{\infty}(\R^{n};\mathscr{L}(\C^{N}))$ satisfies $\Norm{A-\tilde{A}}{\infty}<\delta$, then $D\tilde{A}$ also has an $H^{\infty}$ functional calculus in  $L^{p}$.
\end{enumerate}
\end{corollary}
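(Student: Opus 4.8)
The plan is to realize $DA$ inside the framework of Definition~\ref{def:pert} by a doubling device, and then to deduce \eqref{it:DAchar} from Corollary~\ref{cor:char} and \eqref{it:DApert} from Corollary~\ref{cor:pert}; this is the $L^p$ counterpart of the reduction carried out in the self-adjoint $L^2$ setting in \cite[Theorem~3.1]{akm}. Working on $L^p(\R^n;\C^{2N})\cong L^p(\R^n;\C^N)\oplus L^p(\R^n;\C^N)$, I would put
\[
  \Gamma=\begin{pmatrix}0&0\\ D&0\end{pmatrix},\qquad
  \rGamma=\begin{pmatrix}0&D\\ 0&0\end{pmatrix},\qquad
  B_1=I,\qquad B_2=\begin{pmatrix}I&0\\ 0&A\end{pmatrix},
\]
so that $\Pi=\Gamma+\rGamma$ has symbol $\hat\Pi(\xi)=\left(\begin{smallmatrix}0&\hat D(\xi)\\ \hat D(\xi)&0\end{smallmatrix}\right)$, while $\Pi_B=\Gamma+B_1\rGamma B_2=\left(\begin{smallmatrix}0&DA\\ D&0\end{smallmatrix}\right)$ and $\underline{\Pi}_B=\rGamma+B_2\Gamma B_1=\left(\begin{smallmatrix}0&D\\ AD&0\end{smallmatrix}\right)$. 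Replacing $A$ by $\tilde A$ corresponds to replacing $B_2$ by $\operatorname{diag}(I,\tilde A)$ and keeping $B_1=I$, so that $\Norm{B_1-\tilde B_1}{\infty}+\Norm{B_2-\tilde B_2}{\infty}=\Norm{A-\tilde A}{\infty}$, which is precisely the gauge of smallness in \eqref{it:DApert}.

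The first block of work is a sequence of routine verifications of the hypotheses. Since $\hat\Gamma(\xi)^2=\hat\rGamma(\xi)^2=0$, and since $\sigma(\hat\Pi(\xi))=\sigma(\hat D(\xi))\cup(-\sigma(\hat D(\xi)))$, $\kernel(\hat\Pi(\xi))=\kernel(\hat D(\xi))\oplus\kernel(\hat D(\xi))=\kernel(\hat\Gamma(\xi))\cap\kernel(\hat\rGamma(\xi))$, and $|\hat\Pi(\xi)e|^2=|\hat D(\xi)e_1|^2+|\hat D(\xi)e_2|^2$ for $e=(e_1,e_2)\in\range(\hat\Pi(\xi))=\range(\hat D(\xi))\oplus\range(\hat D(\xi))$, condition \eqref{H1} immediately yields \eqref{Pi1}--\eqref{Pi3}, so $\Pi$ is a Hodge--Dirac operator with constant coefficients. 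Because $B_1B_2=B_2B_1$ is block diagonal, $\rGamma B_2B_1\rGamma=0$ and $\Gamma B_1B_2\Gamma=0$ on $\mathscr{S}(\R^n;\C^{2N})$; and, unwinding \eqref{coercond} for $\Pi_B$ and for $\underline{\Pi}_B$ through the identities $\range_p(\rGamma)=\range_p(D)\oplus\{0\}$, $\range_p(\Gamma)=\{0\}\oplus\range_p(D)$ and their adjoint versions, one finds that the two coercivity inequalities required for $\Pi_B$ and the two required for $\underline{\Pi}_B$ are precisely \eqref{H2} and \eqref{H3}. Hence $\Pi_B$ and $\underline{\Pi}_B$ are Hodge--Dirac operators with variable coefficients in $L^p$ for every $p\in(p_1,p_2)$, and by Theorem~\ref{thm:D} (applied with $k=1$) the constant-coefficient operator $D$, hence also $\Pi$, is bisectorial in each $L^p$.

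The substantive step, which I expect to be the main obstacle, is to transfer R-bisectoriality and boundedness of the $H^\infty$ functional calculus back and forth between $\Pi_B$ on $L^p(\R^n;\C^{2N})$ and $DA$ on $L^p(\R^n;\C^N)$, and to produce, along the way, the Hodge decomposition of $\Pi_B$ required by Corollary~\ref{cor:char}. I would argue exactly as in \cite[Theorem~3.1]{akm}. The only implication with content in \eqref{it:DAchar} is that R-bisectoriality of $DA$ (in every $L^p$, $p\in(p_1,p_2)$) implies a bounded $H^\infty$ calculus, the converse being Remark~\ref{fcRsect}; so one may assume $DA$ R-bisectorial, deduce that $\Pi_B$ is R-bisectorial, and then extract from the off-diagonal block structure together with the bisectoriality of $D$ the two splittings of Lemma~\ref{lem:Hodge2}, i.e.\ the Hodge decomposition of $\Pi_B$ in each $L^p$. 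Relative to that decomposition, $\Pi_B$ is the zero operator on $\kernel_p(\Pi_B)$ direct sum an operator of the form $\left(\begin{smallmatrix}0&T\\ S&0\end{smallmatrix}\right)$ on $\overline{\range_p(\Gamma)}\oplus\overline{\range_p(\rGamma_B)}$, where $S$ is the part of $D$ and $T$ the part of $DA$ acting on $\overline{\range_p(D)}$; such a bisectorial block operator has a bounded $H^\infty$ calculus (resp.\ is R-bisectorial) exactly when the sectorial operator $TS=DAD|_{\overline{\range_p(D)}}$ does, and, since $S$ and $A$ are bounded intertwiners between $TS$, $ST$ and the part of $DA$ on $\overline{\range_p(D)}$ while $DA$ acts as zero on $\kernel_p(DA)$, this is equivalent to the corresponding property of $DA$ on all of $L^p(\R^n;\C^N)$. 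The delicate points here are keeping track of the natural domain $\{u:Au\in\domain_p(D)\}$ of $DA$, and checking that \eqref{H2}--\eqref{H3} are exactly what makes the $L^2$ accretivity arguments of \cite{akm} go through in $L^p$; with those in hand, for each $p\in(p_1,p_2)$ the operator $\Pi_B$ is R-bisectorial in $L^p(\R^n;\C^{2N})$, respectively has a bounded $H^\infty$ functional calculus there, if and only if $DA$ has the corresponding property in $L^p(\R^n;\C^N)$. Granting this, \eqref{it:DAchar} is immediate from Corollary~\ref{cor:char} applied to $\Pi_B$, and \eqref{it:DApert} follows by applying Corollary~\ref{cor:pert} to the doubled operator $\Pi_A$ built from $A$ and then transferring the conclusion back to $D\tilde A$.
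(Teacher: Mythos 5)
Your overall strategy — embed $DA$ in a doubled space as a Hodge--Dirac operator with variable coefficients, then invoke Corollary~\ref{cor:char} and Corollary~\ref{cor:pert} — is the right one, and coincides with the paper's. But your specific choice of coefficient matrices is wrong in a way that breaks the key transfer step, and your attempted repair of that step does not work.

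You take $B_1=I$ and $B_2=\operatorname{diag}(I,A)$, which produces
\[
  \Pi_B=\begin{pmatrix}0&DA\\ D&0\end{pmatrix},\qquad
  \Pi_B^2=\begin{pmatrix}DAD&0\\ 0&D\,(DA)\end{pmatrix}.
\]
Neither diagonal block of $\Pi_B^2$ is $(DA)^2=DADA$: the first is $DAD$ and the second is $D(DA)$. Consequently the resolvent $(I+it\Pi_B)^{-1}$ involves $(I+t^2DAD)^{-1}$ and $(I+t^2D(DA))^{-1}$, which are \emph{not} controlled by the R-bisectoriality of $DA$. Your argument that ``$TS=DAD$ \dots is equivalent to the corresponding property of $DA$'' by intertwining with $S$ and $A$ fails: $S$ is the (unbounded) restriction of $D$, not a bounded intertwiner, and there is no similarity between $DAD$ and $(DA)^2$ — one has $(DAD)\,A=(DA)^2$ and $A\,(DAD)=(AD)^2$, which are one-sided factorizations rather than conjugations, and $A$ is not invertible on all of $L^p$ (only coercive on $\range_p(D)$). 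So the crucial implication ``$DA$ R-bisectorial $\Rightarrow$ $\Pi_B$ R-bisectorial'' is unjustified, and the return trip from the $H^\infty$ calculus of $\Pi_B$ back to that of $DA$ is likewise unjustified.

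The paper instead takes $B_1=\operatorname{diag}(A,0)$ and $B_2=\operatorname{diag}(0,A)$, giving
\[
  \Pi_B=\begin{pmatrix}0&ADA\\ D&0\end{pmatrix},\qquad
  \Pi_B^2=\begin{pmatrix}(AD)^2&0\\ 0&(DA)^2\end{pmatrix},
\]
so both diagonal blocks are genuine squares of $DA$ (and of $AD$, which is similar to $DA$ on the relevant ranges). This yields the explicit factorization
\[
  (I+it\Pi_B)^{-1}
  =\begin{pmatrix}I&-itADA\\ 0&I\end{pmatrix}
   \begin{pmatrix}I&0\\ 0&(I+t^2(DA)^2)^{-1}\end{pmatrix}
   \begin{pmatrix}I&0\\ -itD&I\end{pmatrix},
\]
from which R-bisectoriality of $\Pi_B$ is read off from that of $DA$ (using \eqref{H3} to handle the unbounded factor $D$ on the right, after dualizing), and the intertwining identity $f(\Pi_B)(Au,u)^T=(Af(DA)u,f(DA)u)^T$, which transports the $H^\infty$ calculus of $\Pi_B$ back to $DA$. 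With your $B_1,B_2$ neither of these is available. (The verification of \eqref{nilpcond}, \eqref{coercond}, \eqref{Pi1}--\eqref{Pi3} that you carry out, as well as the reduction of part \eqref{it:DApert} to part \eqref{it:DAchar}, is fine for either choice of $B_1,B_2$; the gap is precisely in the transfer step you flagged as ``the main obstacle.'')
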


\begin{proof}
  \eqref{it:DAchar} The philosophy of the proof is to reduce the consideration of an operator of the form $DA$ to the Hodge-Dirac operator with variable coefficients $\Pi_B$, which we already understand from the previous results.  
On $\C^{N}\oplus \C^{N}$, consider   the matrices
\begin{equation*}
 \hat \Gamma_{j} := \begin{pmatrix} 0 & 0 \\ \hat D_{j} & 0\end{pmatrix},\quad
 \hat \rGamma_{j} := \begin{pmatrix} 0 & \hat D_{j} \\ 0 & 0\end{pmatrix},\quad
 B_{1} := \begin{pmatrix} A & 0 \\ 0 & 0\end{pmatrix},\quad 
 B_{2} := \begin{pmatrix} 0 & 0 \\ 0 & A\end{pmatrix}.
\end{equation*}
We define the associated differential operators $\Gamma$, $\rGamma$ and $\Pi$ acting in $L^p:=L^p(\R^n;\C^N\oplus\C^N)$ as in Subsections~\ref{ss:unpert}, and the operators
\begin{equation*}
  \Pi_B=\begin{pmatrix} 0 & ADA \\ D & 0 \end{pmatrix},\qquad
  \underline{\Pi}_B=\begin{pmatrix} 0 & D \\ ADA & 0 \end{pmatrix},
\end{equation*}
as in Subsection \ref{ss:pert}.

We claim that both $\Pi_{B}$ and $\underline{\Pi}_B$   are then Hodge-Dirac operators with variable coefficients in $L^p$ which Hodge-decompose 
$L^{p}$ for all $p \in (p_{1},p_{2})$.
  Indeed, the hypotheses \eqref{H1}, \eqref{H2} and \eqref{H3} guarantee the conditions \eqref{Pi1}, \eqref{Pi2} and \eqref{coercond}. The remaining requirements \eqref{Pi3} and \eqref{nilpcond}, as well as   the Hodge decomposition (see \cite[Lemma 3.5]{hmp}), are satisfied because of the special form of $\Pi_{B}$.

A computation shows that
\begin{equation*}
  (I+it\Pi_{B})^{-1}
  =\begin{pmatrix} I & -itADA \\ 0 & I\end{pmatrix}
   \begin{pmatrix} I & 0\\ 0 & (I+t^{2}(DA)^{2})^{-1}\end{pmatrix}
   \begin{pmatrix} I & 0 \\ -itD &   I  \end{pmatrix}.
\end{equation*}
  Assuming that $DA$ is R-bisectorial, we check that so is $\Pi_B$. This amounts to verifying the R-boundedness of the families of operators
\begin{equation*}
  (I+t^{2}(DA)^{2})^{-1},\quad
  -itADA(I+t^{2}(DA)^{2})^{-1},\quad
  -it(I+t^{2}(DA)^{2})^{-1}D,
\end{equation*}
where $t\in\R$. For the first two, this is immediate from the R-bisectoriality of $DA$ and the boundedness of $A$. For the third one, we need the stability of R-boundedness in the $L^p$ spaces under adjoints, and hypothesis \eqref{H3} which allows to reduce the adjoint $-itD^*(I+t^2(A^*D^*)^2)^{-1}$ to a function of $(DA)^*=A^*D^*$ by composing with $A^*$ from the left.

Thus, by Corollary \ref{cor:char}, $\Pi_{B}$ has an $H^{\infty}$ functional calculus on $L^{p}$ for all $p \in (p_{1},p_{2})$. But the resolvent formula above also gives
\begin{equation*}
  f(\Pi_{B})\begin{pmatrix} Au \\ u \end{pmatrix}
  = \begin{pmatrix} Af(DA)u \\ f(DA)u \end{pmatrix},
\end{equation*}
for $f \in H^{\infty}_{  0 }(S_{\theta})$ and $u \in L^p$,   and hence we find that $DA$ has an $H^{\infty}$ functional calculus, too. This completes the proof that the R-bisectoriality of $DA$ implies functional calculus. The converse direction   is a general property of the functional calculus in $L^{p} $ (see Remark \ref{fcRsect}).

  \eqref{it:DApert} We turn to the second part and assume that $DA$ is R-bisectorial. We first note that $\tilde{A}$ also satisfies the hypotheses \eqref{H2} and \eqref{H3} when $\delta$ is small enough. Indeed, for $u\in\range_p(D)$, we have $\Norm{\tilde{A}u}{p}\geq\Norm{Au}{p}-\Norm{A-\tilde{A}}{\infty}\Norm{u}{p}\geq(c-\delta)\Norm{u}{p}$, and \eqref{H3} is proven similarly. Moreover, the R-bisectoriality of $DA$ implies the same property for $D\tilde{A}$ by a Neumann series argument, as
\begin{equation*}
  I+itD\tilde{A}=I+itDA-itD(A-\tilde{A})
  =(I+itDA)[I-(I+itDA)^{-1}itD(A-\tilde{A})],
\end{equation*}
where the family $\{(I+itDA)^{-1}itD:t\in\R\}$ is R-bounded (by duality, \eqref{H3}, and the R-bisectoriality of $DA$), and the factor $\Norm{A-\tilde{A}}{\infty}<\delta$ ensures convergence for $\delta$ small enough. The $H^{\infty}$ calculus then follows from part \eqref{it:DAchar} applied to $\tilde{A}$ in place of $A$. 
\end{proof}

\section{Lipschitz estimates}

In this final section, we   prove   Lipschitz estimates of the form
\begin{equation*}
  \Norm{f(\Pi_{B})u-f(\Pi_{A})u}{p}
  \lesssim  \max_{i=1,2}\Norm{A_i-B_i}{\infty} \Norm{f}{\infty}\Norm{u}{p},
\end{equation*}
for small perturbations of the coefficient matrices involved in the Hodge-Dirac operators.
Such estimates are obtained via holomorphic dependence results for perturbations $B_{z}$ depending on a complex parameter $z$. This technique can be seen as one of the original motivations for studying the Kato problem  for operators with complex coefficients.
We start with the operators studied in Corollary \ref{cor:DB}, and then deduce similar estimates for general Hodge-Dirac operators with variable coefficients, as in \cite[Section 10.1]{aamescorial}.

Let $D$ be a first order differential operator as in Corollary \ref{cor:DB}.
Let $U$ be an open set of $\C$ and $(A_{z})_{z\in U}$ a family of multiplication operators   such that the map $z\mapsto A_z\in L^{\infty}(\R^n;\C^N)$ is holomorphic.  
Let $1\leq p_{1} <p_{2} \leq \infty$, $z_{0}\in U$, and assume that $DA_{z_{0}}$ has a bounded $H^{\infty}$ functional calculus in $L^p$ for all $p\in (p_{1},p_{2})$. By Corollary \ref{cor:DB}, for each $p\in(p_1,p_2)$, there then exists a $\delta=\delta_p>0$ 
such that $DA_{z}$ has a bounded $H^{\infty}$ functional calculus in $L^p$  for all $z \in B(z_{0},\delta)$. Moreover, we have the following.

\begin{proposition}\label{prop:holo}
For $\theta \in (\mu, \frac{\pi}{2})$ and $f \in H^{\infty}(S_{\theta})$, the function $z \mapsto f(DA_{z})$
is holomorphic on $\disc(z_{0},\delta)$.
\end{proposition}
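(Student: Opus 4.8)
The plan is to show that $z\mapsto f(DA_z)$ is holomorphic by exhibiting it locally as a uniform (or locally uniform) limit of holomorphic functions, after first dealing with the case $f\in H^\infty_0(S_\theta)$ and then passing to general $f\in H^\infty(S_\theta)$ by the standard approximation in the definition of the functional calculus.

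First I would treat $\psi\in H^\infty_0(S_\theta)$. Pick $\theta'\in(\mu,\theta)$, so that $\partial S_{\theta'}$ lies in the resolvent set of $DA_z$ for all $z\in\disc(z_0,\delta)$ (shrinking $\delta$ if needed, using the uniform R-bisectoriality of the family coming from the Neumann series argument in Corollary~\ref{cor:DB}). Then
\begin{equation*}
  \psi(DA_z)=\frac{1}{2i\pi}\int_{\partial S_{\theta'}}\psi(\lambda)(\lambda-DA_z)^{-1}\ud\lambda,
\end{equation*}
and the integral converges absolutely in $\bddlin(L^p)$ because $|\psi(\lambda)|\lesssim|\lambda/(1+\lambda^2)|^{\alpha}$ while $\Norm{(\lambda-DA_z)^{-1}}{}\lesssim|\lambda|^{-1}$ uniformly in $z$. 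So it suffices to show $z\mapsto(\lambda-DA_z)^{-1}\in\bddlin(L^p)$ is holomorphic for each fixed $\lambda\in\partial S_{\theta'}$, with a bound locally uniform in $\lambda$ so that differentiation under the integral sign (equivalently, Morera's theorem applied to $z\mapsto\int\ldots$) is justified. Holomorphy of the resolvent follows from the resolvent identity: writing $DA_z-DA_{z_0}=D(A_z-A_{z_0})$ and using that $z\mapsto A_z$ is holomorphic into $L^\infty$, one gets
\begin{equation*}
  (\lambda-DA_z)^{-1}=(\lambda-DA_{z_0})^{-1}\big[I-D(A_z-A_{z_0})(\lambda-DA_{z_0})^{-1}\big]^{-1},
\end{equation*}
provided $\Norm{D(A_z-A_{z_0})(\lambda-DA_{z_0})^{-1}}{}<1$. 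Here the key point, exactly as in the proof of Corollary~\ref{cor:DB}, is that $\{\lambda^{-1}D(\lambda-DA_{z_0})^{-1}\lambda=\ldots\}$, more precisely the family $\{D(\lambda-DA_{z_0})^{-1}:\lambda\in\partial S_{\theta'}\}$, is uniformly bounded: one reduces the adjoint $(\lambda^*-A_{z_0}^*D^*)^{-1}D^*$ to a bounded function of $(DA_{z_0})^*=A_{z_0}^*D^*$ by composing with $A_{z_0}^*$ on the left, using hypothesis~\eqref{H3}. Thus for $\Norm{A_z-A_{z_0}}{\infty}$ small the Neumann series for the bracketed inverse converges, each term is holomorphic in $z$ (composition of the holomorphic map $z\mapsto A_z$ with bounded linear operations), and the convergence is uniform in $z$ on a small disc and uniform in $\lambda\in\partial S_{\theta'}$; hence $z\mapsto(\lambda-DA_z)^{-1}$ is holomorphic with the required uniformity, and $z\mapsto\psi(DA_z)$ is holomorphic.

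Finally, for general $f\in H^\infty(S_\theta)$, recall that $f(DA_z)u=f(0)\proj_0^{z}u+\lim_n\psi_n(DA_z)u$ for a bounded sequence $\psi_n\in H^\infty_0(S_\theta)$ converging locally uniformly to $f$ (as in the definition after the second Definition in Section~\ref{sec:prelim}). The projections $\proj_0^z=\lim_{t\to\infty}(I+t^2(DA_z)^2)^{-1}$ depend holomorphically on $z$ (same resolvent argument, in the strong operator topology), and by the uniform boundedness of the $H^\infty$ calculus on $\disc(z_0,\delta)$ — which Corollary~\ref{cor:DB} provides, with a constant locally uniform in $z$ after possibly shrinking $\delta$ — the convergence $\psi_n(DA_z)\to f(DA_z)$ is dominated, so $z\mapsto\psi_n(DA_z)u$ are holomorphic $L^p$-valued maps converging locally uniformly; hence the limit $z\mapsto f(DA_z)u$ is holomorphic. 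By uniform boundedness in $z$ and the closed graph / uniform boundedness principle for holomorphic functions, $z\mapsto f(DA_z)\in\bddlin(L^p)$ is itself holomorphic.

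I expect the main obstacle to be bookkeeping the two layers of uniformity needed to differentiate under the integral and to pass to the limit: one must check that the resolvent bound, the R-bisectoriality constants, and the $H^\infty$-calculus bound are all locally uniform in $z\in\disc(z_0,\delta)$ (which forces one extra shrinking of $\delta$), and that the family $\{D(\lambda-DA_{z_0})^{-1}\}$ used in the Neumann series is genuinely uniformly bounded over the contour $\partial S_{\theta'}$ — this is where hypothesis~\eqref{H3} and the duality argument from Corollary~\ref{cor:DB} re-enter and must be invoked carefully. Everything else is routine: holomorphy is a local, pointwise-on-the-contour statement, and the rest is the standard machinery of the Dunford integral.
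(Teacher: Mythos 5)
Your proposal is correct and follows essentially the same route as the paper: both establish holomorphy of $z\mapsto(\lambda-DA_z)^{-1}$ with bounds uniform in $\lambda$ on the contour, the crucial ingredient being the uniform boundedness of $D(\lambda-DA_z)^{-1}$ (equivalently its adjoint $D^*(\bar\lambda-A_z^*D^*)^{-1}$) obtained from \eqref{H3}, bisectoriality, and duality, and then pass to the Dunford integral and a strong-operator limit for general $f\in H^\infty$. The only cosmetic difference is that you derive resolvent holomorphy via the resolvent identity and a Neumann series, whereas the paper writes the derivative formula directly; these are interchangeable.
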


\begin{proof}
This is entirely similar to \cite[Theorem 6.1, Theorem 6.4]{akm}. 
Letting $\tau\in\C\setminus S_{\theta}$, we have
\begin{equation*}
  \frac{d}{dz}(I+\tau DA_{z})^{-1} = -(I+\tau DA_{z})^{-1}\tau DA'_{z} (I+\tau DA_{z})^{-1}.
\end{equation*}
From \eqref{H1}, \eqref{H3} and the bisectoriality of $DA_{z}$, these operators are uniformly bounded for $z \in U$, and thus the functions $z \mapsto (I+\tau DA_{z})^{-1}$ are 
holomorphic. The result is then obtained by passing to uniform limits in the strong operator toplogy.
\end{proof}

The Lipschitz estimates now follow.

\begin{corollary}\label{cor:LipDA}
Let $1\leq p_{1}<p_{2}\leq \infty$, let $D$ be a first order differential operator, and $A\in L^{\infty}(\R^n;\C^N)$ a multiplication operator which satisfy the hypotheses \eqref{H1}, \eqref{H2} and \eqref{H3} of Corollary \ref{cor:DB}. Let moreover $DA$ be R-bisectorial.
Then, for each $p\in(p_1,p_2)$, there exists $\delta=\delta_p>0$ such that, if $\tilde{A} \in L^{\infty}(\R^n;\C^N)$ satisfies $\Norm{A-\tilde{A}}{\infty}<\delta$,
then $D\tilde{A}$ has a bounded $H^\infty$ functional calculus in $L^p$ with some angle $\omega \in (0,\frac{\pi}{2})$, and for $\theta \in (\omega,\frac{\pi}{2})$, $f\in H^{\infty}(S_{\theta})$, and $u \in L^p$ we have
\begin{equation*}
  \Norm{f(DA)u-f(D\tilde{A})u}{p} \lesssim \Norm{A-\tilde{A}}{\infty}\Norm{f}{\infty}\Norm{u}{p}.
\end{equation*}
\end{corollary}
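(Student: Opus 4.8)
The plan is to deduce the Lipschitz bound from the holomorphic dependence already recorded in Proposition~\ref{prop:holo}, via a Schwarz-lemma argument along the affine path joining $A$ to $\tilde A$, in the spirit of \cite[Section~6]{akm}. Fix $p\in(p_1,p_2)$ and let $\delta=\delta_p$ be as in Corollary~\ref{cor:DB}, so that $DB$ has a bounded $H^{\infty}$ functional calculus in $L^p$ for every $B\in L^{\infty}(\R^{n};\mathscr{L}(\C^{N}))$ with $\Norm{B-A}{\infty}<\delta$. The first point I would establish is that this calculus is bounded \emph{uniformly} in such $B$: although Corollary~\ref{cor:DB} is phrased as an existence statement, the bound it produces on $\Norm{f(DB)}{\bddlin(L^p)}$ involves only $\Norm{f}{\infty}$ together with the R-bisectoriality constants of the associated operators $\Pi_B,\underline{\Pi}_B$ and the norms of the Hodge projections; by the Neumann-series estimates in the proofs of Corollaries~\ref{cor:pert} and~\ref{cor:DB} and by Proposition~\ref{prop:Hodgepert}, all of these stay controlled for $\Norm{B-A}{\infty}<\delta$ (after possibly shrinking $\delta$). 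This yields a constant $C=C_p$ with $\Norm{f(DB)u}{p}\le C\Norm{f}{\infty}\Norm{u}{p}$ for all $f\in H^{\infty}(S_\theta)$, $u\in L^p$, and all admissible $B$.

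Next I would set up the path. Given $\tilde A$ with $\Norm{A-\tilde A}{\infty}<\delta$, put $A_z:=A+z(\tilde A-A)$ for $z\in\C$; this is an entire $L^{\infty}(\R^{n};\mathscr{L}(\C^{N}))$-valued function with $A_0=A$, $A_1=\tilde A$, and $\Norm{A_z-A}{\infty}=\abs{z}\,\Norm{\tilde A-A}{\infty}$. Set $R:=\delta/\Norm{\tilde A-A}{\infty}$, so $R>1$ and $\Norm{A_z-A}{\infty}<\delta$ for all $z\in\disc(0,R)$. For each $z_1\in\disc(0,R)$ the operator $DA_{z_1}$ has a bounded $H^{\infty}$ functional calculus, so Proposition~\ref{prop:holo}, applied with centre $z_1$, gives holomorphy of $z\mapsto f(DA_z)$ near $z_1$; since this holds at every point of $\disc(0,R)$, the $L^p$-valued map $g(z):=f(DA_z)u$ is holomorphic on $\disc(0,R)$, and by the uniform bound of the previous paragraph $\Norm{g(z)}{p}\le M:=C\Norm{f}{\infty}\Norm{u}{p}$ there.

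Finally I would invoke the vector-valued Schwarz lemma for $\tilde g(z):=g(z)-g(0)$, which is holomorphic on $\disc(0,R)$, vanishes at $0$, and obeys $\Norm{\tilde g(z)}{p}\le 2M$: it follows that $\Norm{\tilde g(z)}{p}\le 2M\abs{z}/R$, and taking $z=1$ gives
\begin{equation*}
  \Norm{f(D\tilde A)u-f(DA)u}{p}=\Norm{\tilde g(1)}{p}\le\frac{2M}{R}
  =\frac{2C_p}{\delta_p}\,\Norm{A-\tilde A}{\infty}\,\Norm{f}{\infty}\,\Norm{u}{p},
\end{equation*}
which is the asserted estimate; the angle statement is part of Corollary~\ref{cor:DB}.

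The main obstacle I anticipate is precisely the uniform boundedness claimed in the first paragraph: one has to check that none of the constants along the chain Corollary~\ref{cor:char}$\to$Corollary~\ref{cor:DB}$\to$Corollary~\ref{cor:LipDA} degenerate as $z$ ranges over $\disc(0,R)$ — in particular, that the interpolation step used to pass from R-bisectoriality at the single exponent $p$ to the open-interval hypothesis of Corollary~\ref{cor:char} can be carried out with $z$-uniform bounds, and that the Hodge-decomposition constants of $\Pi_{B}$ remain controlled. All the ingredients are available (Proposition~\ref{prop:Hodgepert}, the Neumann-series arguments in Corollaries~\ref{cor:pert} and~\ref{cor:DB}, and Remark~\ref{rem:interpolation}), but they must be reassembled carefully since the cited corollaries are stated only qualitatively.
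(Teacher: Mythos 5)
Your proof takes the same route as the paper: parametrize the coefficients along the holomorphic path $A_z$ connecting $A$ to $\tilde A$ (the paper normalizes so that $\tilde A$ is reached at $z_1=\Norm{\tilde A-A}{\infty}$ inside $\disc(0,\delta)$, whereas you reach it at $z=1$ inside $\disc(0,R)$ with $R=\delta/\Norm{\tilde A-A}{\infty}$ — an equivalent rescaling), invoke Corollary~\ref{cor:DB} and Proposition~\ref{prop:holo} to obtain a bounded $L^p$-valued holomorphic function, and conclude by the Schwarz lemma. Your explicit attention to the uniform bound on the $H^{\infty}$ calculus over the disc — which the paper's proof leaves implicit — is a worthwhile clarification, since the Schwarz lemma does require an a priori bound independent of $z$, and the justification you sketch via Proposition~\ref{prop:Hodgepert} and the Neumann-series arguments is the right one.
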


\begin{proof} 
Let $A_z:=A+z(\tilde{A}-A)/\Norm{\tilde{A}-A}{\infty}$. Then $A_0=A$, $A_{z_1}=\tilde{A}$ for $z_1=\Norm{\tilde{A}-A}{\infty}$, and $z\mapsto A_z$ is holomorphic. For $z\in\disc(0,\delta)$, where $\delta$ is small enough, $DA_z$ has a bounded $H^{\infty}$ functional calculus in $L^p$ by Corollary~\ref{cor:DB}, and $z\mapsto f(DA_z)$ is holomorphic for $f\in H^{\infty}(S_{\theta})$ by Proposition~\ref{prop:holo}. By the Schwarz Lemma,
\begin{equation*}
  \Norm{f(DA_0)u-f(DA_{z_1})u}{p} \lesssim \abs{z_1}\,\Norm{f}{\infty}\Norm{u}{p},
\end{equation*} 
which gives the assertion. 
\end{proof}

We finally turn to the Lipschitz estimates for Hodge-Dirac operators with variable coefficients, using the same approach as in \cite[Section 10.1]{aamescorial}.

\begin{corollary} 
Let $1\leq p_1<p_2\leq\infty$, and let $\Pi_A$ and $\underline{\Pi}_A$ be Hodge-Dirac operators with variable coefficients, where $\Pi_A$ is R-bisectorial in $L^p$ and Hodge-decomposes $L^p$ for all $p\in(p_{1},p_{2})$. Then, for each $p\in(p_1,p_2)$, there exists $\delta=\delta_p>0$ such that, if $\Pi_B$ and $\underline{\Pi}_B$ are also Hodge-Dirac operators with variable coefficients with $\Norm{A_1-B_1}{\infty}+\Norm{A_2-B_2}{\infty}<\delta$,
then both $\Pi_{A}$ and $\Pi_{B}$ have a bounded $H^\infty$ functional calculus with some angle $\omega \in (0,\frac{\pi}{2})$, and for $\theta \in (\omega,\frac{\pi}{2})$, $f\in H^{\infty}(S_{\theta})$, and $u\in L^p$ there holds
\begin{equation*}
  \Norm{f(\Pi_A)u-f(\Pi_B)u}{p}
  \lesssim \max_{i=1,2}\Norm{A_i-B_i}{\infty}\Norm{f}{\infty}\Norm{u}{p}.
\end{equation*} 
\end{corollary}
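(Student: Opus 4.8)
The plan is to obtain the estimate from a holomorphic-dependence argument combined with the Schwarz lemma, in the spirit of \cite[Theorem~6.1, Theorem~6.4]{akm} and \cite[Section~10.1]{aamescorial}. First of all, $\Pi_A$ has a bounded $H^\infty$ functional calculus by Corollary~\ref{cor:char} (it is R-bisectorial, Hodge-decomposes $L^p$ for all $p\in(p_1,p_2)$, and $\underline{\Pi}_A$ is a Hodge-Dirac operator with variable coefficients), and, once $\delta$ is small, so does $\Pi_B$ by Corollary~\ref{cor:pert}; one obtains a common calculus angle $\omega\in(0,\tfrac\pi2)$. It remains to prove the Lipschitz bound, and for this I would fix $p\in(p_1,p_2)$, set $z_1:=\max_{i=1,2}\Norm{A_i-B_i}{\infty}$, and for $z$ in the disc $\disc(0,\delta)$ define the affine holomorphic family $B_{i,z}:=A_i+(z/z_1)(B_i-A_i)$, so that $B_{i,0}=A_i$, $B_{i,z_1}=B_i$, and $\Norm{B_{i,z}-A_i}{\infty}\leq\abs{z}$.

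The first substantial step is to check that, for $\delta$ small enough, the operators $\Pi_{B_z}$ and $\underline{\Pi}_{B_z}$ are Hodge-Dirac operators with variable coefficients which are R-bisectorial and Hodge-decompose $L^q$ for all $q\in(p_1,p_2)$, \emph{with all the relevant bounds uniform in $z\in\disc(0,\delta)$}. The coercivity \eqref{coercond} for $B_z$ (and its analogue for $\underline{\Pi}_{B_z}$) is stable under perturbations of size $<\delta$; the R-bisectoriality and the Hodge-decomposition, together with the norms of the associated Hodge projections, are produced by the Neumann-series constructions in the proofs of Proposition~\ref{prop:Hodgepert}, Corollary~\ref{cor:Hodgepert} and Corollary~\ref{cor:pert}, whose constants depend only on the data of $\Pi_A$, the coercivity constants, and $\delta$. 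Feeding these uniform bounds into Corollary~\ref{cor:char} then yields $\Norm{f(\Pi_{B_z})}{\bddlin(L^p)}\lesssim\Norm{f}{\infty}$ for all $f\in H^\infty(S_\theta)$, $\theta\in(\omega,\tfrac\pi2)$, uniformly in $z\in\disc(0,\delta)$.

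Next I would show that $z\mapsto f(\Pi_{B_z})u$ is holomorphic on $\disc(0,\delta)$ for fixed $u\in L^p$, arguing as in Proposition~\ref{prop:holo}. Differentiating the resolvent identity gives, for $\tau\in\C\setminus S_\theta$, $\tfrac{d}{dz}(I+\tau\Pi_{B_z})^{-1}=-(I+\tau\Pi_{B_z})^{-1}\,\tau(\tfrac{d}{dz}\rGamma_{B_z})(I+\tau\Pi_{B_z})^{-1}$, where $\tfrac{d}{dz}\rGamma_{B_z}=(z_1^{-1}(B_1-A_1))\rGamma B_{2,z}+B_{1,z}\rGamma(z_1^{-1}(B_2-A_2))$; the a priori unbounded operator $\tau(\tfrac{d}{dz}\rGamma_{B_z})(I+\tau\Pi_{B_z})^{-1}$ is in fact bounded uniformly in $z$ and $\tau$, thanks to \eqref{coercond}, the Hodge-decomposition and the uniform bisectoriality, exactly as the corresponding operators in the proof of Proposition~\ref{prop:holo}. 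Hence $z\mapsto(I+\tau\Pi_{B_z})^{-1}$ is holomorphic, and so is $z\mapsto\psi(\Pi_{B_z})$ for $\psi\in H_0^\infty(S_\theta)$ (differentiating under the defining contour integral) and then $z\mapsto f(\Pi_{B_z})$ for $f\in H^\infty(S_\theta)$ (by the limiting procedure defining the extended calculus). For $u\in L^p$ and $v\in L^{p'}$, the scalar function $g(z):=\pair{f(\Pi_{B_z})u}{v}$ is therefore holomorphic on $\disc(0,\delta)$ and satisfies $\abs{g(z)}\lesssim\Norm{f}{\infty}\Norm{u}{p}\Norm{v}{p'}$ there; the Schwarz lemma applied to $g-g(0)$ gives $\abs{g(z_1)-g(0)}\lesssim\abs{z_1}\Norm{f}{\infty}\Norm{u}{p}\Norm{v}{p'}$, and since $\abs{z_1}=\max_i\Norm{A_i-B_i}{\infty}$, $g(0)=\pair{f(\Pi_A)u}{v}$ and $g(z_1)=\pair{f(\Pi_B)u}{v}$, taking the supremum over $\Norm{v}{p'}\leq1$ yields the claimed estimate.

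The main obstacle is the first substantial step, specifically the preservation of the nilpotence condition \eqref{nilpcond} (and its $\underline{\Pi}$-analogue) along the \emph{complex} path $z\mapsto B_z$: for the affine family one computes $\rGamma B_{2,z}B_{1,z}\rGamma=\tfrac{z(z-z_1)}{z_1^{2}}\,\rGamma(B_2-A_2)(B_1-A_1)\rGamma$, which need not vanish for $z\notin\{0,z_1\}$. For the block-structured operators arising from Corollary~\ref{cor:DB} --- the case relevant to the boundary value problems motivating this section --- the product $B_{2,z}B_{1,z}$ (resp. $B_{1,z}B_{2,z}$) vanishes identically, so the affine path is legitimate and everything goes through; in the general case one has to choose the path so that $B_{2,z}B_{1,z}$ and $B_{1,z}B_{2,z}$ remain in the linear subspaces determined by the nilpotence relations, and it is this construction, together with the uniformity of all perturbation constants claimed above, that carries the weight of the proof.
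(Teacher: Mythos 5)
Your diagnosis of the obstacle is correct and quite sharp: the affine path $B_{i,z}=A_i+(z/z_1)(B_i-A_i)$ need not preserve the nilpotence condition \eqref{nilpcond}, since indeed
\begin{equation*}
  \rGamma B_{2,z}B_{1,z}\rGamma = \frac{z(z-z_1)}{z_1^{2}}\,\rGamma (B_2-A_2)(B_1-A_1)\rGamma,
\end{equation*}
which can fail to vanish for $z\notin\{0,z_1\}$, so for those $z$ the operator $\Pi_{B_z}$ need not be a Hodge-Dirac operator with variable coefficients at all. Your proposal identifies this and honestly leaves it open; that gap is genuine, and no general path-selection device is offered.

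The paper closes the gap not by choosing a better path but by eliminating the need for one: it reduces the general $\Pi_A$ to a $DA$-type operator, for which only the multiplication factor is varied and no nilpotence condition on the path is required. Concretely, on $L^p\oplus L^p\oplus L^p$ one forms
\begin{equation*}
  D:=\begin{pmatrix} 0 & 0 & 0 \\ 0 & 0 & \rGamma \\ 0 & \Gamma & 0 \end{pmatrix},\qquad
  A:=\begin{pmatrix} 0 & 0 & 0 \\ 0 & A_1 & 0 \\ 0 & 0 & A_2 \end{pmatrix},
\end{equation*}
computes $(I+itDA)^{-1}$ in terms of $(I+t^2\Pi_A^2)^{-1}$, the Hodge projections $\proj^A_{\Gamma}$, $\proj^A_{\rGammaSmall_A}$ and the isomorphism $A_1^{-1}:\overline{\range_p(\rGamma_A)}\to\overline{\range_p(\rGamma)}$, and thereby transfers R-bisectoriality from $\Pi_A$ to $DA$. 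One then constructs intertwining operators $S_A$ and $T_A$ with $T_AS_A=I$ and $S_A\Pi_A=(DA)S_A$, giving $f(\Pi_A)u=T_Af(DA)S_Au$, and splits
\begin{equation*}
  f(\Pi_A)u-f(\Pi_B)u=[T_A-T_B]f(DA)S_Au+T_B[f(DA)-f(DB)]S_Au+T_Bf(DB)[S_A-S_B]u.
\end{equation*}
The middle term is handled by Corollary~\ref{cor:LipDA}, which is precisely your holomorphy-plus-Schwarz argument applied to $z\mapsto DA_z$, where it is unproblematic since $D$ is fixed; the outer terms are controlled by the Lipschitz bounds on the Hodge projections from Proposition~\ref{prop:Hodgepert} and Corollary~\ref{cor:Hodgepert}. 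So your strategy is exactly the right one for the $DA$ piece, but the direct application of it to $\Pi_{B_z}$ does not survive the nilpotence constraint, and the $3\times3$-block reduction together with the three-term decomposition is the missing step.
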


\begin{proof} 
The philosophy of the proof is analogous to that of Corollary~\ref{cor:DB} but goes in the opposite direction: we now deduce results for operators of the form $\Pi_A$ from what we already know for the operators $DA$. To this end, consider the space $L^p\oplus L^p\oplus L^p$ and the operators
\begin{equation*}
  D:=\begin{pmatrix} 0 & 0 & 0 \\ 0 & 0 & \rGamma \\ 0 & \Gamma & 0 \end{pmatrix},\qquad
  A:=\begin{pmatrix} 0 & 0 & 0 \\ 0 & A_1 & 0 \\ 0 & 0 & A_2 \end{pmatrix}.
\end{equation*}

Let us write $\proj^A_0$, $\proj^A_{\Gamma}$ and $\proj^A_{\rGammaSmall_A}$ for the Hodge-projections associated to $\Pi_A$. By \eqref{coercond}, the restriction $A_1:\overline{\range_p(\rGamma)}\to\overline{\range_p(\rGamma_A)}$ is an isomorphism, and we write $A_1^{-1}$ for its inverse. Then a computation shows that
\begin{equation*}
  (I+itDA)^{-1}
  =\begin{pmatrix}
     I & 0 & 0 \\
     0 & A_1^{-1}\proj_{\rGammaSmall_A}^A(I+t^2\Pi_A^2)^{-1}A_1 & -it\rGamma A_2(I+t^2\Pi_A^2)^{-1} \\
     0 & -it\Gamma(I+t^2\Pi_A^2)^{-1}A_1 & \proj_{\Gamma}^A(I+t^2\Pi_A^2)^{-1} \end{pmatrix},
\end{equation*}
and one can check that the R-bisectoriality and the Hodge-decomposition of $\Pi_A$ imply the R-bisectoriality of $DA$. Indeed, for the diagonal elements above it is immediate, and for the non-diagonal elements it follows after writing $\rGamma A_2=A_1^{-1}\rGamma_A=A_1^{-1}\proj_{\rGammaSmall_A}^A\Pi_A$ and $\Gamma=\proj_{\Gamma}^A\Pi_A$.

We next define operators $S_A:L^p\to L^p\oplus L^p\oplus L^p$ and $T_A:L^p\oplus L^p\oplus L^p\to L^p$ by
\begin{equation*}
\begin{split}
  S_A u &:=(\proj^A_0 u,A_1^{-1}\proj^A_{\rGammaSmall_A}u,\proj^A_{\Gamma}u), \\
  T_A (u,v,w) &:=\proj^A_0 u+\proj^A_{\Gamma}w+\proj^A_{\rGammaSmall_A}A_1 v.
\end{split}
\end{equation*}
One checks that $T_A S_A=I$ and $S_A\Pi_A=(DA)S_A$. Hence $S_A(\lambda-\Pi_A)^{-1}=(\lambda-DA)^{-1}S_A$, and then by the definition of the functional calculus,
\begin{equation*}
   f(\Pi_A)u=T_A f(DA)S_A u,\qquad f\in H^{\infty}(S_{\theta}),\quad u\in L^p.
\end{equation*}

We repeat the above definitions and observations with $B$ in place of $A$, and then
\begin{equation*}
\begin{split}
  f(\Pi_A)u-f(\Pi_B)u
  =[T_A-T_B]f(DA)S_A u + T_B[f(DA)-f(DB)]S_A u + T_B f(DB)[S_A-S_B] u.
\end{split}
\end{equation*}
The asserted Lipschitz estimate then follows by using Corollary~\ref{cor:LipDA} for the middle term, and Proposition~\ref{prop:Hodgepert} and Corollary~\ref{cor:Hodgepert} for the other two terms. 
\end{proof}

\end{document}